\newcommand{\rad}{\mathbf{rad}}
\newcommand{\Rnk}{\mathbf{rk}}
\newcommand{\tr}{\mathbf{tr}}
\newcommand{\Dyn}{\mathbf{Dyn}}
\newcommand{\Z}{\mathbb{Z}}
\newcommand{\Inc}{\mathbf{Inc}}
\newcommand{\A}{\mathbb{A}}
\newcommand{\B}{\mathbb{B}}
\newcommand{\C}{\mathbb{C}}
\newcommand{\D}{\mathbb{D}}
\newcommand{\E}{\mathbb{E}}
\newcommand{\F}{\mathbb{F}}
\newcommand{\G}{\mathbb{G}}
\newcommand{\CRnk}{\mathbf{cork}}
\newcommand{\Id}{\mathbf{I}}
\newcommand{\Adj}{\widecheck{\mathrm{Adj}}}
\newcommand{\FS}{\widetilde{\mathcal{T}}}
\newcommand{\FSq}{\widetilde{T}}
\newcommand{\Star}{\mathbb{S}}
\newcommand{\bas}{\mathbf{e}}
\newcommand{\sgn}{\mathrm{sgn}}
\newcommand{\sou}{\mathbf{s}}
\newcommand{\tar}{\mathbf{t}}
\newcommand{\mi}[1]{\hat{#1}}
\newcommand{\Cox}{\mathrm{\Phi}}
\newcommand{\cox}{\mathrm{\varphi}}
\newcommand{\coxN}{\mathbf{c}}
\newcommand{\va}{\lambda}
\newcommand{\MiNu}{\mathbf{v}}
\newcommand\mathmiddlescript[1]{\vcenter{\hbox{$\scriptstyle #1$}}}
\begin{document}

\setcounter{page}{49}
\publyear{2021}
\papernumber{2092}
\volume{184}
\issue{1}

   \finalVersionForARXIV

\title{A Graph Theoretical Framework for the Strong Gram Classification of  Non-negative Unit Forms of Dynkin Type $\A_n$}

\author{Jes\'us Arturo~Jim\'enez Gonz\'alez\thanks{Address of correspondence: Instituto de Matem\'aticas, Mexico City, Mexico. \newline \newline
          \vspace*{-6mm}{\scriptsize{Received June 2021; \ revised November 2021.}}}
\\
 Instituto de Matem\'aticas, UNAM, Mexico\\
jejim@im.unam.mx
}

\maketitle

\runninghead{J.A. Jim\'enez Gonz\'alez}{A Graph Theoretical Framework for the Strong Gram Classification of  Non-negative}

\begin{abstract}
In the context of signed line graphs, this article introduces a modified inflation technique to study strong Gram congruence of non-negative (integral quadratic) unit forms, and uses it to show  that weak and strong Gram congruence coincide among positive unit forms of Dynkin type $\A_n$. The concept of inverse of a quiver is also introduced, and is used to obtain and analyze the Coxeter matrix of non-negative unit forms of Dynkin type $\A_n$. With these tools, connected principal unit forms of Dynkin type $\A_n$ are also classified up to strong congruence.
\end{abstract}

\begin{keywords}
Integral quadratic form,  Gram congruence,  quiver,   Dynkin type,  Coxeter matrix,  edge-bipartite graph,  signed line graph.
2020 MSC:  15A63, 15A21, 15B36, 05C22, 05C50, 05C76, 05B20.
\end{keywords}

\section{Introduction} \label{Intro}

An integral quadratic form is a homogeneous polynomial $q$ of degree two with integer coefficients, given usually as $q(x)=x^{\tr}\widecheck{G}_qx$ for a unique upper triangular matrix $\widecheck{G}_q$. In case $\widecheck{G}_q$ has only $1$'s as diagonal entries, $q$ is simply called a unit form, and the symmetric matrix $G_q=\widecheck{G}_q+\widecheck{G}_q^{\tr}$ is a \emph{generalized Cartan matrix} (see for instance~\cite{BGZ06} or~\cite{dS20}). Two unit forms $q'$ and $q$ are said to be weakly (resp. strongly) Gram congruent, if there is an integer matrix $B$ with $\det(B)=\pm 1$ such that $G_{q'}=B^{\tr}G_qB$ (resp. $\widecheck{G}_{q'}=B^{\tr}\widecheck{G}_qG$). Clearly, strong congruence implies weak congruence.

The classification of connected positive unit forms up to weak Gram congruence is well known (see Ovsienko~\cite{saO78}, Kosakowska~\cite{jK12} and Simson~\cite{dS13}). Corresponding generalizations to the non-negative case are also known (see Barot and de la Pe\~na~\cite{BP99,BP06}, and Simson et al.~\cite{dS16a,SZ17}). A strong Gram classification of non-negative unit forms is far from been completed (see~\cite[Problem~2.1$(a)$]{dS18} and~\cite[Problem~1.12]{dS20} for a specific formulation of these problems in terms of Coxeter spectra):

\medskip\noindent
\textbf{Problem A.}\ 
Classify all (connected) non-negative unit forms up to strong Gram congruence.

\medskip
The following problem was posed by Simson in~\cite[Problems~1.10 and~1.11]{dS13b} (see also~\cite[Problem~2.1$(b)$]{dS18}):

\medskip\smallskip\noindent
\textbf{Problem B.}\
Construct algorithms that compute an integer matrix $B$ with $\det(B)=\pm 1$ that defines the strong Gram congruence $\widecheck{G}_{q'}=B^{\tr}\widecheck{G}_{q}B$, in case the quadratic unit forms $q$ and $q'$ are strongly Gram congruent.

\medskip\smallskip
There have been many advances towards the strong classification of positive quadratic forms, both from computational and geometrical points of view. For instance, a classification for small cases ($n \leq 9$), including the exceptional cases $\E_6$, $\E_7$ and $\E_8$, as well as all non-simply laced cases, was presented by Simson et al., cf.~\cite{dS20}. The general case of Dynkin type $\D_n$ was announced in~\cite{dS18} (see also~\cite[\S 4]{dS20}), and is solved by Simson in~\cite{dS21a}.

There is a well known graphical description of quadratic (unit) forms by means of (loop-less) signed multi-graphs (or bigraphs, as called in the paper following Simson~\cite{dS13}). It is easy to verify that if $q(x)=\frac{1}{2}x^{\tr}(2\Id+A)x$, where $A$ is the symmetric adjacency matrix of a loop-less bigraph (where $\Id$ denotes the identity matrix of appropriate size), then $q$ is non-negative if and only if the least eigenvalue of $A$ is greater than or equal to $-2$.

Loop-lees bigraphs whose symmetric adjacency matrix has least eigenvalue greater than or equal to $-2$ have attracted the attention of graph theorists since the 1960's~\cite{HN61,mA67,GH68,CGSS}, mainly focusing in their graphical characterization and Laplacian (or Kirchhoff) spectral properties. The connection of these bigraphs with the classical root systems $\A\D\E$ was established in the seminal work~\cite{CGSS} by Cameron, Goethals, Seidel and Shult in 1976 (see also~\cite{tZ81}).

The theory of (loop-lees) bigraphs with least eigenvalue $-2$, and the theory of quadratic (unit) forms, have maintained fairly independent roads (see for instance~\cite{dS20} and references therein), perhaps due to their seemingly different graphical and algebraic goals. The author translated in~\cite{jaJ2018} the inflation techniques into the combinatorial context of (a version of) line digraphs, using the so-called \emph{incidence quadratic form} of a quiver (directed multi-graph). Some of the basic concepts in~\cite{jaJ2018} had already been introduced by Zaslavsky in~\cite[\S 3]{tZ08}, see also~\cite{BS16,tZ84}.  Here we further explore this connection. More on the development of the theory of line graph can be found in~\cite{CDD21}.

Motivated by results of Barot~\cite{B99} and von H{\"o}hne~\cite{vH88}, the author associated to any quiver $Q$ a bigraph $\Inc(Q)$, called \emph{incidence graph} of $Q$~\cite{jaJ2018}. This construction has many similarities to the so-called \emph{line digraph} (introduced by Harary and Norman~\cite{HN61} in 1961), and is used as an auxiliary construction for \emph{signed line graphs} (see~\cite{tZ08,BS16}). The theory of flations for non-negative unit forms of Dynkin type $\A_n$, in this combinatorial context, was worked out in~\cite{jaJ2018}. Here we propose a modified theory of flations that preserves strong Gram congruence of unit forms (Section~\ref{S(C)}), given a solution of Problem~A for positive unit forms of Dynkin type $\A_n$:

\medskip\smallskip\noindent
\textbf{Theorem A.}
Any two connected positive unit forms of Dynkin type $\A_n$ are strongly Gram congruent.

\medskip\smallskip
An essentially combinatorial proof of Theorem~A is given below (Theorem~\ref{T:20}), after some technical preparations. An equivalent result was obtained recently by Simson in~\cite{dS21b}, in the context of edge-bipartite graphs and morsifications of quadratic forms. Recall that a positive unit form is positive precisely when it is non-negative and has corank zero (see Lemma~\ref{L:01}$(c)$ below). A connected non-negative unit form of corank one is called principal. In Section~\ref{S(A)} we present a combinatorial formula for the Coxeter matrix in terms of quiver inverses, whose similarity invariants (for instance, the characteristic polynomial, called Coxeter polynomial of the unit form), serve as discriminant in the analogous of Theorem~A for principal unit forms (Theorem~\ref{T:34}):

\medskip\smallskip\noindent
\textbf{Theorem B.}
Two connected principal unit forms of Dynkin type $\A_n$ are strongly Gram congruent if and only if they have the same Coxeter polynomial.

\medskip\smallskip
A list of the corresponding Coxeter polynomials is given in Remark~\ref{R:29medio}. Quiver inverses are used in Corollary~\ref{C:28} to give bounds for the coefficients of the Coxeter matrix associated to non-negative unit forms $q$ of Dynkin type $\A_n$. In Section~\ref{Basic} we collect concepts and general results needed throughout the paper.

\section{Basic notions} \label{Basic}

The set of integers is denoted by $\Z$, and the canonical basis of $\Z^n$ by $\bas_1,\ldots,\bas_n$. All matrices have integer coefficients, and for a $n \times m$ matrix $A$ and a $n \times m'$ matrix $B$, the $n \times (m + m')$ matrix with columns those of $A$ and $B$ (in this order), is denoted by $[A|B]$. In particular, if $A_1,\ldots,A_m$ are the columns of $A$, we write $A=[A_1|\cdots |A_m]$. The identity $n \times n$ matrix is denoted by $\Id_n$, and simply by $\Id$ for appropriate size. The transpose of a matrix $A$ is denoted by $A^{\tr}$, and if $A$ is an invertible square matrix, then $A^{-\tr}$ denotes $(A^{-1})^{\tr}$. By total (or linear) order we mean a partial order where any two elements are comparable, and a totally (or linearly) ordered set is one equipped with such order.

\subsection{Integral quadratic forms} \label{S(I):def}

Let $q=q(x_1,\ldots,x_n)$ be an \textbf{integral quadratic form} on $n \geq 1$ variables, that is, $q$ is a homogeneous polynomial of degree two,
\[
q(x_1,\ldots,x_n)=\sum_{1 \leq i\leq j \leq n}q_{ij}x_ix_j.
\]
In case $q_{ii}=1$ for $i=1,\ldots,n$ we say that $q$ is a \textbf{unit form}. The \textbf{(upper) triangular Gram matrix} associated to an integral quadratic form $q$ is the $n \times n$ matrix given by $\widecheck{G}_q=(g_{ij})$ where $g_{ij}=q_{ij}$ for $1 \leq i \leq j \leq n$ and $g_{ij}=0$ for $1 \leq j < i \leq n$. The \textbf{symmetric Gram matrix} associated to $q$ is given by $G_q=\widecheck{G}_q+\widecheck{G}_q^{\tr}$.

\medskip
As usual, the form $q$ can be seen as a function $q:\Z^n \to \Z$ given by evaluation in the vector of variables $(x_1,\ldots,x_n)$. Notice that for $x=(x_1,\ldots,x_n)^{\tr} \in \Z^n$ we have
\[
q(x)=x^{\tr}\widecheck{G}_qx=\frac{1}{2}x^{\tr}G_qx.
\]
For an endomorphism $T:\Z^n \to \Z^n$, the integral quadratic form $qT$ is given by $qT(x)=q(T(x))$. Observe that $G_{qT}=T^{\tr}G_qT$ (where here as in the rest of the text we identify an endomorphism $T$ with its square matrix under the ordered canonical basis $\bas_1,\ldots,\bas_n$ of $\Z^n$). We say that two unit forms $q$ and $q'$ are \textbf{weakly congruent} if there is an automorphism $T$ of $\Z^n$ such that $q'=qT$ (written $q' \sim q$ or $q' \sim^T q$).

\medskip
The \textbf{direct sum} $q \oplus q': \Z^{n+n'} \to \Z$ of integral quadratic forms $q:\Z^n\to \Z$ and $q':\Z^{n'} \to \Z$ is given by
\[
(q \oplus q')(x_1,\ldots,x_{n+n'})=q(x_1,\ldots,x_n)+q'(x_{n+1},\ldots,x_{n+n'}).
\]
Observe that $\widecheck{G}_{q\oplus q'}=\begin{pmatrix} \widecheck{G}_{q}&0\\0&\widecheck{G}_{q'} \end{pmatrix}$, which we denote by $\widecheck{G}_q\oplus \widecheck{G}_{q'}$. The \textbf{symmetric bilinear form} associated to an integral quadratic form $q:\Z^n \to \Z$, denoted by $q(-|-):\Z^n\times \Z^n \to \Z$, is given by
\[
q(x|y)=q(x+y)-q(x)-q(y),
\]
(notice that $q(x|y)=x^{\tr}G_qy$ for any $x,y \in \Z^n$). The \textbf{radical} $\rad(q)$ of a unit form $q$ is the set of vectors $x$ in $\Z^n$ such that $q(x|-)\equiv 0$ (called \textbf{radical vectors} of $q$). Clearly, $\rad(q)$ is a subgroup of $\Z^n$, whose rank $\CRnk(q)$ is called \textbf{corank} of $q$. Alternatively, $\CRnk(q)=n-\Rnk(q)$, where $\Rnk(q)=\Rnk(G_q)$ is called \textbf{rank} of $q$. By \textbf{root} of $q$ we mean a vector $x \in \Z^n$ such that $q(x)=1$.

\medskip
For convenience, throughout the text we use the notation $q_{ji}=q_{ij}$ for $i<j$.  A unit form $q$ is said to be \textbf{connected} if for any indices $i \neq j$ there is a sequence of indices $i_0,\ldots,i_r$ with $r \geq 1$ such that $i=i_0$, $j=i_r$ and $q_{i_{t-1}i_t}\neq 0$ for $t=1,\ldots,r$. Recall that a unit form $q:\Z^n \to \Z$ is called \textbf{non-negative} (resp. \textbf{positive}) if $q(x)\geq 0$ for any vector $x$ in $\Z^n$  (resp. $q(x)>0$ for any non-zero vector $x$ in $\Z^n$). A unit form $q$ is called \textbf{principal} if $q$ is non-negative and has corank one, see Simson~\cite{dS11} and Kosakowska~\cite{jK12}. The following important observation is well known (see for instance~\cite{BP99}), for convenience here we give a short proof.

\begin{lemma}\label{L:01}
For a unit form $q$ the following assertions hold.
\begin{itemize}
\itemsep=0.9pt
 \item[a)] If $q$ is not connected, then $q \sim q^1\oplus q^2$ for unit forms $q^1$ and $q^2$.
 \item[b)] If $q$ is non-negative and $q(x)=0$, then $x$ is a radical vector of $q$.
 \item[c)] The unit form $q$ is positive if and only if $q$ is non-negative and $\CRnk(q)=0$.
 \end{itemize}
Assume now that $q$ is a non-negative unit form, and that $q'$ is a unit form weakly congruent to $q$.
\begin{itemize}
\itemsep=0.9pt
 \item[d)] Then $q'$ is non-negative and $\CRnk(q')=\CRnk(q)$.
 \item[e)] If $q$ is connected, then so is $q'$.
\end{itemize}
\end{lemma}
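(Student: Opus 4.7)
The plan is to treat (a)--(c) as intrinsic properties of a single form, then bootstrap to (d) and (e) using invariance of the bilinear form under basis change. Part (a) is a permutation argument: I would partition $\{1,\ldots,n\}$ into connectedness classes and apply the permutation matrix that collects them into contiguous blocks, so that $qT_\sigma$ becomes block diagonal and hence $q\sim q^1\oplus q^2$ (grouping the last blocks if there are more than two). Part (b) is the standard minimization trick: if $q(x)=0$, then for every $y\in\Z^n$ and every $k\in\Z$, bilinearity gives
\[
q(kx+y)=k^2q(x)+q(y)+k\,q(x\mid y)=q(y)+k\,q(x\mid y)\geq 0,
\]
and a linear-in-$k$ expression bounded below over all integers forces $q(x\mid y)=0$; thus $x\in\rad(q)$. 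Part (c) then drops out: any nonzero radical vector $x$ gives $q(x\mid x)=2q(x)=0$, which precludes positivity, while the converse is (b) applied to a purported zero of $q$. Part (d) is direct: $q'(x)=q(Tx)\geq 0$ settles non-negativity, and $G_{q'}=T^{\tr}G_qT$ with $T$ invertible over $\mathbb{Q}$ preserves rank.

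The crux is (e), which I would handle by contrapositive. Assume $q'\sim q$ is disconnected; by (a) and transitivity of $\sim$, we may pick $T\in GL_n(\Z)$ with $qT=\tilde q^1\oplus\tilde q^2$, the $\tilde q^j$ being non-negative unit forms in at least one variable. Letting $A_j$ denote the coordinate sublattices of the two summands and setting $L_j:=T(A_j)$, transferring through $T$ yields an orthogonal decomposition $\Z^n=L_1\oplus L_2$ with $q(L_1\mid L_2)=0$ and both $L_j$ of positive rank. Decompose each canonical vector as $\bas_i=\ell_i+m_i$ with $\ell_i\in L_1$, $m_i\in L_2$; orthogonality and non-negativity then force $1=q(\bas_i)=q(\ell_i)+q(m_i)$ with each summand in $\{0,1\}$, producing a partition $\{1,\ldots,n\}=J_1\sqcup J_2$. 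Part (b) places $m_i\in\rad(q)$ for $i\in J_1$ and $\ell_i\in\rad(q)$ for $i\in J_2$; expanding $q(\bas_i\mid\bas_j)$ for $i\in J_1,j\in J_2$ then makes all four cross-terms vanish, so $q_{ij}=0$. Hence, provided both $J_j$ are nonempty, $q$ is combinatorially disconnected, contradicting the hypothesis.

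The main obstacle is to exclude the edge case that $J_1$ or $J_2$ is empty. If, say, $J_1=\emptyset$, then every $m_i\in\rad(q)$, so $L_2$---being the $\Z$-span of the $m_i$ via the projection $\Z^n\to L_2$ along $L_1$---is contained in $\rad(q)$. Pulling back through $T$ and using the easy identity $\rad(qT)=T^{-1}(\rad(q))$ yields $A_2\subseteq\rad(\tilde q^1\oplus\tilde q^2)$; but any $\bas_i\in A_2$ satisfies $(\tilde q^1\oplus\tilde q^2)(\bas_i)=1\neq 0$, which is forbidden for radical vectors. The symmetric case $J_2=\emptyset$ is identical, completing the argument for (e).
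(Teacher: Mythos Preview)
Your argument is correct and follows essentially the same route as the paper: parts (a)--(d) match, and for (e) both you and the paper argue by contrapositive, decompose each canonical basis vector across the two summands, use non-negativity together with (b) to split $\{1,\ldots,n\}$ into two index sets, and then show $q_{ij}=0$ across that partition. There is a harmless labeling slip in your final paragraph: from $J_1=\emptyset$ one concludes that every $\ell_i\in\rad(q)$ (hence $L_1\subseteq\rad(q)$ and $A_1\subseteq\rad(qT)$), not $m_i$, $L_2$, $A_2$; alternatively, start that case with ``$J_2=\emptyset$'' and your text reads correctly as written. The one genuine point of divergence is the nonemptiness step: the paper rules out $Y=\emptyset$ by a rank comparison (writing $G_q=B^{\tr}G_{q^1}B$ and deducing $\Rnk(q)\leq\Rnk(q^1)<\Rnk(q^1)+\Rnk(q^2)=\Rnk(q)$), whereas you pull the offending summand back into $\rad(qT)$ and observe that a canonical basis vector of a unit form cannot be radical since $q(x\mid x)=2q(x)$ forces value zero; your version is slightly more direct and avoids quoting rank facts.
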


\begin{proof}
A more specific version of $(a)$ will be given later in Lemma~\ref{L:05}. For $(b)$, take a basic vector $\bas_i$ and any integer $m$. If $q(x)=0$ for a vector $x$, then
\[
0 \leq q(mx+\bas_i)=mq(x|\bas_i)+1.
\]
Since $m$ and $i$ are arbitrary, then $q(x|\bas_i)=0$ and $x$ is a radical vector of $q$.
\eject

For $(c)$, if $q$ is positive then clearly $q$ is non-negative and $\rad(q)=0$. Conversely, if $q$ is non-negative and $\CRnk(q)=0$, then $\rad(q)=0$. By $(b)$, for any non-zero vector $x$ we have $q(x)>0$. To prove $(d)$ consider an automorphism $T$ such that $q'=qT$. Then $q'(x)=q(T(x))\geq 0$, that is, $q'$ is non-negative. Since
\[
G_{q'}=T^{\tr}G_qT,
\]
it is well known that $\Rnk(G_{q'})=\Rnk(G_q)$ (cf.~\cite[\S~4.5]{cdM00}), and therefore $\CRnk(q')=\CRnk(q)$.

\medskip
Finally, to show $(e)$ assume that $q'=qT$ is not connected. By $(a)$ we may assume that $q'=q^1 \oplus q^2$ for unit forms $q^1:\Z^{n_1} \to \Z$ and $q^2:\Z^{n_2} \to \Z$ (with $n=n_1+n_2$). Let $y_1,\ldots,y_n$ be the columns of $T^{-1}$. Since $q$ is a unit form, $y_i$ is a root of $q'$ for $i=1\ldots,n$. Moreover, if $y_i^1$ (resp. $y_i^2$) is the projection of $y_i$ into its first $n_1$ entries (resp. its last $n_2$ entries), then
\[
1=q'(y_i)=q^1(y^1_i)+q^2(y^2_i).
\]
By $(d)$, the unit forms $q^1$ and $q^2$ are non-negative, therefore either $q^1(y^1_i)=1$ and $q^2(y^2_i)=0$, or $q^1(y^1_i)=0$ and $q^2(y^2_i)=1$. Consider the following partition of the set $\{1,\ldots,n\}$,
\[
X=\{1 \leq i \leq n \mid q^1(y^1_i)=1\} \quad \text{and} \quad Y=\{1 \leq j \leq n \mid q^2(y^2_j)=1\}.
\]
By $(b)$, observe that if $i \in X$ and $j \in Y$, then $y_j^1$ is a radical vector of $q^1$ and $y_i^2$ is a radical vector of $q^2$, and therefore
\[
q(\bas_i+\bas_j)=q'(y_i+y_j)=q^1(y^1_i+y^1_j)+q^2(y^2_i+y^2_j)=q^1(y^1_i)+q^2(y^2_j)=2.
\]
Then $q(\bas_i+\bas_j)=q(\bas_i)+q(\bas_j)$, which implies that $q_{ij}=0$ for arbitrary $i \in X$ and $j \in Y$ (for $q_{ij}=q(\bas_i|\bas_j)$). We need to show that both $X$ and $Y$ are non-empty sets.

\medskip
We may write
\[
(T^{-1})^{\tr}G_{q'}T^{-1}=\begin{pmatrix} B^{\tr}&C^{\tr} \end{pmatrix}\begin{pmatrix} G_{q^1}&0\\0&G_{q^2} \end{pmatrix}\begin{pmatrix} B\\C \end{pmatrix}=B^{\tr}G_{q^1}B+C^{\tr}G_{q^2}C,
\]
where $B$ and $C$ are respectively $n_1 \times n$ and $n_2 \times n$ matrices. If $Y$ is an empty set, then the columns of $C$ are radical vectors of $q^2$, and therefore
\[
G_q=(T^{-1})^{\tr}G_{q'}T^{-1}=B^{\tr}G_{q^1}B.
\]
In particular, $\Rnk(q)\leq \Rnk(q^1)$ (cf.~\cite[\S 4.5]{cdM00}). This is impossible, since by $(d)$ we have $\Rnk(q)=\Rnk(q')=\Rnk(q^1)+\Rnk(q^2)>\Rnk(q^1)$ (a similar contradiction can be found assuming that $X$ is an empty set). This completes the proof, since $X \neq \emptyset$ and $Y \neq \emptyset$ imply that $q$ is non-connected.
\end{proof}

The example following Remark~\ref{R:02} below shows that the non-negativity assumption is necessary for part $(e)$ in Lemma~\ref{L:01}.

\subsection{Bigraphs and associated unit forms} \label{S(I):bigraph}

Let $\Gamma=(\Gamma_0,\Gamma_1,\sigma)$ be a \textbf{bigraph}, that is, a multi-graph $(\Gamma_0,\Gamma_1)$ together with a \textbf{sign function} $\sigma:\Gamma_1 \to \{\pm 1\}$ such that all parallel edges have the same sign (see~\cite{dS13}). As usual, bigraphs are graphically depicted in the following way: for vertices $i$ and $j$, an edge $a$ joining $i$ and $j$ with $\sigma(a)=1$ will be denoted by $\xymatrix{{\bullet_i} \ar@{-}[r] & {\bullet_j}}$, and by $\xymatrix{{\bullet_i} \ar@{.}[r] & {\bullet_j}}$ if $\sigma(a)=-1$ (this convention is used in~\cite{BP99} and~\cite{jaJ2018}, and is opposite to the one used in~\cite{dS13}). We assume that the set of vertices $\Gamma_0$ is totally ordered.  If $\Gamma$ has no loop and $|\Gamma_0|=n$, the \textbf{(upper) triangular adjacency matrix} $\Adj_{\Gamma}$ of $\Gamma$ is the $n \times n$ matrix given by
\[
\Adj_{\Gamma}=\begin{pmatrix} 0 & d_{1,2} & d_{1,3} & \cdots & d_{1,n-1} & d_{1,n} \\
0 & 0 & d_{2,3} & \cdots & d_{2,n-1} & d_{2,n} \\ 0 & 0 & 0 & \vdots & d_{3,n-1} & d_{3,n} \\ \vdots & \vdots & \vdots & \ddots & \vdots & \vdots \\
0 & 0 & 0 & \cdots & 0 & d_{n-1,n} \\
0 & 0 & 0 & \cdots & 0 & 0 \end{pmatrix},
\]
where $|d_{ij}|$ is the number of edges between vertices $i<j$, and $\sigma(a)d_{ij}=|d_{ij}|$ for any such edge $a$. Throughout the text we assume that all bigraphs are \textbf{loop-less}, that is, have no loop.

\medskip
The \textbf{(upper) triangular Gram matrix} $\widecheck{G}_{\Gamma}$ of $\Gamma$ is given by $\widecheck{G}_{\Gamma}=\Id-\Adj_{\Gamma}$ (following the convention in~\cite{dS13} one gets $\widecheck{G}_{\Gamma}=\Id+\Adj_{\Gamma}$). The \textbf{quadratic form associated to a bigraph $\Gamma$} is given by
\[
q_{\Gamma}(x)=x^{\tr}\widecheck{G}_{\Gamma}x, \quad \text{for any $x \in \Z^n$,}
\]
that is, $\widecheck{G}_{q_{\Gamma}}=\widecheck{G}_{\Gamma}$. There is a well known bijection between unit forms and loop-less bigraphs (see for instance~\cite{dS13} and~\cite{jaJ2018}), given by the corresponding triangular (Gram and adjacency) matrices.

\begin{remark}\label{R:02}
For a loop-less bigraph $\Gamma$, the unit form $q_{\Gamma}$ is connected if and only if $\Gamma$ is a connected bigraph.
\end{remark}

\begin{proof}
Take $q_{\Gamma}(x)=\sum \limits_{1 \leq i \leq j \leq n}q_{ij}x_ix_j$, and observe that for any sequence of indices $i_0,\ldots,i_r$ such that $q_{i_{t-1}i_t}\neq 0$ for $t=1,\ldots,r$, there is a sequence of edges $a_1,\ldots,a_r$ in $\Gamma$ such that $a_t$ contains vertices $i_{t-1}$ and $i_t$ (that is, $(i_0,a_1,i_1,\ldots,i_{r-1},a_r,i_r)$ is a walk in $\Gamma$). Hence the claim follows.
\end{proof}

Consider the following connected bigraph $\Gamma$ with twelve edges, and non-connected bigraph $\Gamma'$ with six edges,
 \[
\Gamma= \xymatrix@R=1.3pc@C=1.3pc{\mathmiddlescript{\bullet_1} \ar@{-}@<-.5ex>[dd] \ar@{-}[dd] \ar@{-}@<-.5ex>[rrdd] \ar@{-}[rrdd] \ar@{-}[rr] \ar@{-}@<.5ex>[rr] \ar@{-}@<-.5ex>[rr] \ar@{-}@<1ex>[rr] \ar@{-}@<-1ex>[rr] & &  \mathmiddlescript{\bullet_2} \\ \\
 \mathmiddlescript{\bullet_3} \ar@{-}[rr] \ar@{-}@<-.5ex>[rr] \ar@{-}@<.5ex>[rr] & & \mathmiddlescript{\bullet_4}}
 \qquad \qquad \Gamma'= \xymatrix@R=1.3pc@C=1.3pc{\mathmiddlescript{\bullet_1} \ar@{-}[rr] \ar@{-}@<.5ex>[rr] \ar@{-}@<-.5ex>[rr]  & &  \mathmiddlescript{\bullet_2} \\ \\
 \mathmiddlescript{\bullet_3} \ar@{-}[rr] \ar@{-}@<-.5ex>[rr] \ar@{-}@<.5ex>[rr] & & \mathmiddlescript{\bullet_4}}
 \]
None of the unit forms $q_{\Gamma}$ and $q_{\Gamma'}$ is non-negative, and $q_{\Gamma'} \sim q_{\Gamma}$. Indeed, we have
\[
G_{\Gamma}=\left(\begin{smallmatrix} 2&\mi{5}&\mi{2}&\mi{2}\\\mi{5}&2&0&0\\\mi{2}&0&2&\mi{3}\\\mi{2}&0&\mi{3}&2 \end{smallmatrix}\right) \quad \text{and} \quad G_{\Gamma'}=\left(\begin{smallmatrix}2&\mi{3}&0&0\\\mi{3}&2&0&0\\0&0&2&\mi{3}\\0&0&\mi{3}&2 \end{smallmatrix}\right),
\]
where for an integer $a$ we take $\mi{a}=-a$. A direct calculation shows that if $B=\left(\begin{smallmatrix} 1&0&0&0\\1&1&0&0\\\mi{2}&0&1&0\\\mi{2}&0&0&1 \end{smallmatrix}\right)$, then $G_{\Gamma'}=B^{\tr}G_{\Gamma}B$.

\subsection{Elementary transformations for unit forms} \label{S(I):trans}

We consider the following transformations of a unit form $q:\Z^n \to \Z$.

\begin{enumerate}
 \item \textit{Point inversion.} Take a subset of indices $C \subseteq \{1,\ldots,n\}$ and define the automorphism $V_C:\Z^n \to \Z^n$ given by $V_C(\bas_k)=-\bas_k$ if $k \in C$, and $V_C(\bas_k)=\bas_k$ otherwise. The transformation $V_C$ is known as \textbf{point inversion} (or sign change) for $q$, and the unit form $qV_C$ is usually referred to as \textbf{point inversion} of $q$.

 \item \textit{Swapping.} Given two indices $i \neq j$, consider the transformation $S_{ij}:\Z^n \to \Z^n$ given by $S_{ij}(\bas_i)=\bas_j$, $S_{ij}(\bas_j)=\bas_i$ and $S_{ij}(\bas_k)=\bas_k$ for $k \neq i,j$ (clearly, $S_{ij}=S_{ji}$). We say that the unit form $qS_{ij}$ is obtained from $q$ by \textbf{swapping} indices $i$ and $j$.

 \item \textit{Flation.} For two indices $i \neq j$, consider the sign $\epsilon=\sgn(q_{ij})\in \{+1,0,-1\}$ of $q_{ij}$. Take the linear transformation $T^{\epsilon}_{ij}:\Z^n \to \Z^n$ given by $T^{\epsilon}_{ij}(x)=x-\epsilon x_i\bas_j$, for a (column) vector $x=(x_1,\ldots,x_n)^{\tr}$ in $\Z^n$. The transformation $T^{\epsilon}_{ij}$ will be referred to as \textbf{flation} for $q$.

 \item \textit{$FS$-transformation.} For our arguments we consider the composition $\FSq^{\epsilon}_{ij}=T^{\epsilon}_{ij}S_{ij}$, and call it a \textbf{$FS$-(linear) transformation} for $q$ if $\epsilon=\sgn(q_{ij})$.
\end{enumerate}

\begin{remark}\label{R:03}
Let $q$ be a unit form, with indices $i \neq j$.
\begin{itemize}
\item[a)] Let $T^{\epsilon}_{ij}$ be a flation for $q$. Then $q'=qT^{\epsilon}_{ij}$ is a unit form if and only if $|q_{ij}|\leq 1$, and in that case $q'T^{-\epsilon}_{ij}=q$.

\item[b)] Let $\FSq^{\epsilon}_{ij}$ be a $FS$-transformation for $q$. Then $q'=q\FSq^{\epsilon}_{ij}$ is a unit form if and only if $|q_{ij}|\leq 1$, and in that case $q'\FSq^{-\epsilon}_{ji}=q$.
\end{itemize}
\end{remark}

\begin{proof}
To show $(a)$, observe that $q'(\bas_k)=q(\bas_k)=1$ if $k \neq i$, and
\[
q'(\bas_i)=q(T^{\epsilon}_{ij}(\bas_i))=q(\bas_i-\epsilon\bas_j)=1+(\epsilon-|q_{ij}|).
\]
Since $\epsilon=\sgn(q_{ij})$, it follows that $q'(\bas_i)=1$ if and $|q_{ij}|\leq 1$. That $T_{ij}^{\epsilon}T_{ij}^{-\epsilon}=\Id$ is clear, since
\[
T_{ij}^{\epsilon}T_{ij}^{-\epsilon}(x)=T_{ij}^{\epsilon}(x+\epsilon x_i\bas_j)=T_{ij}^{\epsilon}(x)+\epsilon x_iT_{ij}^{\epsilon}(\bas_j)=x-\epsilon x_i\bas_j+\epsilon x_i\bas_j=x,
\]
for any $x$ in $\Z^n$. Claim $(b)$ follows from $(a)$.
\end{proof}

A composition $T=T_{i_1j_1}^{\epsilon_1}\cdots T_{i_rj_r}^{\epsilon_r}$ is called an \textbf{iterated flation} for a unit form $q$, if taking $q^0=q$, then $T_{i_tj_t}^{\epsilon_t}$ is a flation for $q^{t-1}$, and $q^t=q^{t-1}T_{i_tj_t}^{\epsilon_t}$ is a unit form, for $t=1,\ldots,r$. In a similar situation, a composition $\FSq=\FSq_{i_1j_1}^{\epsilon_1}\cdots \FSq_{i_rj_r}^{\epsilon_r}$ is called an \textbf{iterated $FS$-transformation} for $q$.

\subsection{Weak classification and strong congruence} \label{S(I):cong}

The classification of positive unit forms up to weak congruence is classical: for $n  \geq 1$, the weak equivalence classes of connected positive unit forms in $n$ variables are in correspondence with the set of (simply laced) Dynkin types $\mathcal{D}_n$, where
\begin{equation*}
\mathcal{D}_n = \left\{ \begin{array}{l l} \{\A_n\}, & \text{if $n=1,2,3$},\\ \{\A_n,\D_n\}, & \text{if $n=4,5$ or $n \geq 9$},\\ \{\A_n,\D_n,\E_n\}, & \text{if $n=6,7,8$},\end{array} \right.
\end{equation*}
(called \textbf{Dynkin graphs} on $n$ vertices, see Table~\ref{T:dynkin}). The following weak classification of connected non-negative unit forms can be found in~\cite{SZ17} (see also~\cite{BP99}). If $J \subset \{1,\ldots,n\}$ is a subset of indices, denote by $\tau:\Z^J \to \Z^n$ the canonical inclusion. If $q:\Z^n \to \Z$ is a unit form, then the composition $q'=q\tau$ is a unit form called \textbf{restriction} of $q$, and the unit form $q$ is called \textbf{extension} of $q'$. Simson fixed in~\cite{dS16a} (see also~\cite[Algorithm~3.18]{SZ17}) canonical extensions of connected positive unit forms, via corresponding connected bigraphs $\widehat{\Delta}^{(c)}$ for $c \geq 0$ and $\Delta$ a Dynkin graph, which serve as representatives of weak congruence classes of connected positive unit forms.

\begin{table}[!h]
\begin{center}
\caption{(Simply laced) Dynkin graphs with ordered set of vertices.} \label{T:dynkin}
\renewcommand{\arraystretch}{1.3}
\begin{tabular}{c l}
\hline  Notation & \multicolumn{1}{c}{Graph} \\
\hline \\ [-5pt]
$\A_n \; (n \geq 1)$ & $\xymatrix@C=1pc@R=1pc{\mathmiddlescript{\bullet_1} \ar@{-}[r] & \mathmiddlescript{\bullet_2} \ar@{-}[r] & \mathmiddlescript{\bullet_3} \ldots \mathmiddlescript{\bullet_{n-2}} \ar@{-}[r] & \mathmiddlescript{\bullet_{n-1}} \ar@{-}[r] & \mathmiddlescript{\bullet_n}} $ \\\raisebox{-1ex}{$\D_n \; (n \geq 4)$} & $\xymatrix@C=1pc@R=.2pc{\mathmiddlescript{\bullet_1} \ar@{-}[rd] \\  & \mathmiddlescript{\bullet_3} \ar@{-}[r] & \mathmiddlescript{\bullet_4} \ldots \mathmiddlescript{\bullet_{n-2}} \ar@{-}[r] & \mathmiddlescript{\bullet_{n-1}} \ar@{-}[r] & \mathmiddlescript{\bullet_n} \\ \mathmiddlescript{\bullet_2} \ar@{-}[ru] }  $ \\ \raisebox{-2ex}{$\E_6$} & $\xymatrix@C=1pc@R=1pc{ & & \mathmiddlescript{\bullet_6} \ar@{-}[d] \\ \mathmiddlescript{\bullet_1} \ar@{-}[r] & \mathmiddlescript{\bullet_2} \ar@{-}[r] & \mathmiddlescript{\bullet_3} \ar@{-}[r] & \mathmiddlescript{\bullet_4} \ar@{-}[r] & \mathmiddlescript{\bullet_5}} $ \\ \raisebox{-2ex}{$\E_7$} & $\xymatrix@C=1pc@R=1pc{ & & \mathmiddlescript{\bullet_4} \ar@{-}[d] \\  \mathmiddlescript{\bullet_1} \ar@{-}[r] & \mathmiddlescript{\bullet_2} \ar@{-}[r] & \mathmiddlescript{\bullet_3} \ar@{-}[r] & \mathmiddlescript{\bullet_5} \ar@{-}[r] & \mathmiddlescript{\bullet_6} \ar@{-}[r] & \mathmiddlescript{\bullet_7}} $ \\ \raisebox{-2ex}{$\E_8$} & $\xymatrix@C=1pc@R=1pc{ & & \mathmiddlescript{\bullet_4} \ar@{-}[d] \\ \mathmiddlescript{\bullet_1} \ar@{-}[r] & \mathmiddlescript{\bullet_2} \ar@{-}[r] & \mathmiddlescript{\bullet_3} \ar@{-}[r] & \mathmiddlescript{\bullet_5} \ar@{-}[r] & \mathmiddlescript{\bullet_6} \ar@{-}[r] & \mathmiddlescript{\bullet_7} \ar@{-}[r] & \mathmiddlescript{\bullet_8}} $ \\
 \multicolumn{1}{c}{}\\[-5pt]
\hline
\end{tabular}
\end{center}
\end{table}

\begin{theorem}[Simson-Zaj\k{a}c, 2017]\label{T:04}
Let $q:\Z^n \to \Z$ be a non-negative connected unit form of corank $\CRnk(q)=c$. Then there exists an iterated flation $B:\Z^n \to \Z^n$ and a unique Dynkin graph $\Delta \in \mathcal{D}_{n-c}$, denoted by $\Dyn(q)=\Delta$ and called \textbf{Dynkin type} of $q$, such that the unit form $qB$ is the canonical $c$-extension of $q_{\Delta}$. In particular, two non-negative unit forms $q$ and $q'$ are weakly congruent if and only if they are of the same corank and of the same Dynkin type.
\end{theorem}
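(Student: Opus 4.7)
The plan is to establish the classification by reducing an arbitrary connected non-negative unit form to a canonical representative via iterated flations, then separately verifying that distinct canonical representatives are pairwise non-weakly-congruent. I would proceed in three broad stages.

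First, I would prove a structural bound: any non-negative unit form satisfies $|q_{ij}|\leq 2$ for all $i\neq j$, since $q(\bas_i-\sgn(q_{ij})\bas_j)=2-|q_{ij}|\geq 0$. Moreover, whenever $|q_{ij}|=2$, that same vector lies in $\rad(q)$ by Lemma~\ref{L:01}(b). This separates the problem into two regimes according to whether the underlying bigraph has any edge of multiplicity two, and supplies explicit radical generators in the corank-positive case.

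Second, I would set up a reduction algorithm for the case $|q_{ij}|\leq 1$ everywhere. Introduce a complexity measure such as $\nu(q)=\sum_{i<j}q_{ij}^2$ and aim to show that, unless the bigraph of $q$ already coincides with a canonical extension $\widehat{\Delta}^{(c)}$, some flation $T^{\epsilon}_{ij}$ strictly decreases $\nu$. By Remark~\ref{R:03}(a) the result remains a unit form, and by Lemma~\ref{L:01}(d)--(e) non-negativity, corank and connectedness are all preserved; since $\nu$ is a non-negative integer, the algorithm terminates. The technical core lies here: identifying a reducing flation inside every non-canonical connected non-negative bigraph, which requires a case analysis on local configurations (triangles, doubled edges adjacent to further edges, high-degree branching vertices, etc.). The corank $c>0$ case would be handled by reducing modulo the explicit radical vectors produced in the first stage, applying the argument to the resulting positive form, and then lifting the transformation back to $\Z^n$.

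Third, I would characterize the irreducible endpoints and establish uniqueness. One must verify that a connected non-negative bigraph admitting no reducing flation must coincide with some $\widehat{\Delta}^{(c)}$; this reduces, via the positive case (Ovsienko's theorem), to a combinatorial classification of the admissible shapes of canonical extensions. For uniqueness, the corank $c=\CRnk(q)$ is a weak-congruence invariant by Lemma~\ref{L:01}(d); to distinguish different Dynkin types at fixed corank I would use invariants of the associated positive subform such as the total number of roots $|\{x\in\Z^n\mid q(x)=1\}/\rad(q)|$, which matches the classical root count of $\Delta$. The main obstacle will be the second stage, specifically proving that every non-canonical connected non-negative bigraph admits a strictly reducing flation; termination and preservation of the key properties are routine, but producing the flation in each case requires genuine combinatorial work on the geometry of non-negative bigraphs.
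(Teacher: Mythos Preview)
The paper does not actually prove Theorem~\ref{T:04}; it is quoted as the result of Simson and Zaj\k{a}c~\cite{SZ17} (building on~\cite{BP99,jK12,dS16a}) and used as a black box throughout. So there is no proof in the paper to compare against.

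That said, your sketch is broadly in the spirit of the cited literature: an inflation/deflation algorithm driven by local combinatorics, with invariants used to separate the endpoints. Two points deserve care. First, the complexity function $\nu(q)=\sum_{i<j}q_{ij}^2$ is not obviously monotone under a single flation; a flation $T^{\epsilon}_{ij}$ changes every coefficient $q_{ik}$ with $k\neq i,j$, and can increase $\nu$ locally. The actual algorithms (cf.~\cite{jK12,SZ17}) use more carefully chosen reduction steps or a different termination argument, so you would need either a sharper measure or a different invariant to guarantee termination. Second, your corank $c>0$ step (``reduce modulo radical vectors, apply the positive case, lift back'') glosses over the heart of the matter: the canonical $c$-extension $\widehat{\Delta}^{(c)}$ is a specific bigraph, not an arbitrary extension of $\Delta$ by $c$ radical directions, and showing that iterated flations reach \emph{that particular} extension (rather than some weakly equivalent one) is precisely the content of~\cite{dS16a,SZ17}. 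Your uniqueness argument via root counts modulo the radical is fine for separating Dynkin types at fixed corank.
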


Two unit forms $q$ and $q'$ are said to be \textbf{strongly (Gram) congruent}, written $q' \approx q$ or $q' \approx^B q$, if there is an automorphism $B$ such that
\[
\widecheck{G}_{q'}=B^{\tr}\widecheck{G}_qB.
\]

A complete classification of strong congruence classes for the exceptional Dynkin types $\E_6$, $\E_7$ and $\E_8$ is given in~\cite{dS20,dS18}, as well as for the non-simply laced Dynkin types $\B_n$, $\C_n$, $\F_4$ and $\G_2$. Similar results for the Dynkin type $\D_n$ were announced in~\cite{dS18} and proved  in~\cite{dS21a}.

\begin{lemma}\label{L:05}
Let $q:\Z^n \to \Z$ be a unit form. If $q$ is not connected, then $q \approx q^1 \oplus q^2$ for unit forms $q^1$ and $q^2$, and $q^2 \oplus q^1 \approx q^1 \oplus q^2$. In particular, strong congruence preserves connectedness of non-negative unit forms.
\end{lemma}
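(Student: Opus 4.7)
My plan is to observe that when $q$ is non-connected, a suitable permutation of the indices gives not merely a weak congruence (which is automatic, cf. Lemma~\ref{L:01}(a)) but a \emph{strong} one, because the entries that would normally be destroyed by a permutation---those moved from above to below the diagonal---are precisely the cross-component entries, and these vanish by non-connectedness.

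Concretely, I would first partition the index set as $\{1,\ldots,n\}=X\sqcup Y$ with both $X=\{i_1<\cdots<i_k\}$ and $Y=\{j_1<\cdots<j_{n-k}\}$ nonempty and $q_{st}=0$ for every $s\in X$, $t\in Y$ (using the symmetric convention $q_{ts}=q_{st}$, which rules out edges across the partition in \emph{both} directions). I would then let $P$ be the permutation matrix with $P\bas_\ell=\bas_{i_\ell}$ for $\ell\le k$ and $P\bas_{k+\ell}=\bas_{j_\ell}$ for $\ell\le n-k$, and verify that $P^{\tr}\widecheck{G}_qP=\widecheck{G}_{q^1}\oplus\widecheck{G}_{q^2}$, where $q^1$ and $q^2$ are the unit forms obtained by restricting $q$ to the coordinates indexed by $X$ and $Y$ (in their induced orders). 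This gives $q\approx q^1\oplus q^2$.

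The key verification, which is the only place non-connectedness is used and which is the main point to get right, is that $P^{\tr}\widecheck{G}_qP$ is upper triangular. Using $(P^{\tr}\widecheck{G}_qP)_{ab}=(\widecheck{G}_q)_{\pi(a),\pi(b)}$, I would check the two blocks $a,b\le k$ and $a,b>k$ separately: in each, $\pi$ preserves the order, so upper-triangular entries of $\widecheck{G}_q$ map to upper-triangular entries of $P^{\tr}\widecheck{G}_qP$. For the cross-block entries (say $a\le k<b$ or $a>k\ge b$), $\pi(a)$ and $\pi(b)$ lie in different components, so the entry is either a strict lower-triangular entry of $\widecheck{G}_q$ (hence $0$) or equals $q_{\pi(a),\pi(b)}=0$ by non-connectedness. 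Diagonal entries are $1$ since $q$ is a unit form. The identity $q^2\oplus q^1\approx q^1\oplus q^2$ follows from the same permutation argument applied to $q^1\oplus q^2$ (swapping the two blocks), since the cross-entries of both $\widecheck{G}_{q^1\oplus q^2}$ and $\widecheck{G}_{q^2\oplus q^1}$ already vanish.

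For the ``in particular'' statement, I would simply note that strong congruence implies weak congruence (taking the symmetric parts of the defining identity), so if $q$ is non-negative and connected and $q'\approx q$, then $q'\sim q$, and Lemma~\ref{L:01}(d)--(e) yield that $q'$ is non-negative and connected; equivalently, the contrapositive says that a strong congruent of a connected non-negative form cannot be non-connected. No further obstacle arises, since the only delicate step---preservation of upper triangularity under the reordering---is precisely where non-connectedness enters to kill the problematic entries.
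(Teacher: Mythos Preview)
Your proposal is correct and follows essentially the same route as the paper: a permutation that lists the $X$-indices before the $Y$-indices (each in its induced order) is applied to $\widecheck{G}_q$, and non-connectedness is invoked exactly to kill the off-diagonal cross-block entries so that the conjugate is block-diagonal and upper triangular; the commutation $q^2\oplus q^1\approx q^1\oplus q^2$ and the final claim via Lemma~\ref{L:01}(e) are handled in the same way. Your write-up is in fact a bit more explicit than the paper's about why upper triangularity survives the permutation.
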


\begin{proof}
Assume that $q$ is not connected, and take non-empty sets $X$ and $Y$ such that $q_{ij}=0$ for $i \in X$ and $j \in Y$. There is a permutation $\rho$ of the set $\{1,\ldots,n\}$ satisfying
\begin{itemize}
 \item If $i<j$ and $i,j \in X$ or $i,j \in Y$, then $\rho(i)<\rho(j)$.
 \item If $i \in X$ and $j \in Y$, then $\rho(i)<\rho(j)$.
\end{itemize}
Let $P=[\bas_{\rho^{-1}(1)}|\cdots |\bas_{\rho^{-1}(n)}]$ be the permutation matrix associated to the inverse permutation $\rho^{-1}$, and consider the product
\[
A=(a_{ij})_{i=1}^n=P^{\tr}\widecheck{G}_qP.
\]
Then $a_{\rho(i)\rho(j)}=q_{ij}$, and by the conditions on the permutation $\rho$, the matrix $A$ has the following block-diagonal shape,
\[
A=\begin{pmatrix} A_1&0\\0&A_2 \end{pmatrix}.
\]
Moreover, $A_1$ and $A_2$ are upper diagonal matrices with ones in their main diagonals. Taking $q^i$ such that $\widecheck{G}_{q^i}=A_i$ (for $i=1,2$), we get
\[
P^{\tr}\widecheck{G}_qP=\widecheck{G}_{q^1}\oplus \widecheck{G}_{q^2}=\widecheck{G}_{q^1 \oplus q^2},
\]
as wanted. Swapping the sets $X$ and $Y$ we get $q \approx q^2 \oplus q^1$.

\medskip
The last claim follows by Lemma~\ref{L:01}$(e)$, since strong congruence implies weak congruence.
\end{proof}

\subsection{Elementary quiver transformations} \label{S(I):Quiv}

A \textbf{quiver} $Q=(Q_0,Q_1,\sou,\tar)$ consists of (finite) sets $Q_0$ and $Q_1$, whose elements are called \textbf{vertices} and \textbf{arrows} of $Q$ respectively, and functions $\sou,\tar:Q_1 \to Q_0$. We say that the vertices $v$ and $w$ are \textbf{source} and \textbf{target} of an arrow $i$ respectively, if $\sou(i)=v$ and $\tar(i)=w$, and display $i$ graphically as $\xymatrix{v \ar[r]^-{i} & w}$ if $v \neq w$. For convenience we consider the set $\MiNu(i)=\{\sou(i),\tar(i)\}$ of vertices \textbf{incident} to $i$.  An arrow $i$ with $\sou(i)=\tar(i)$ is called a \textbf{loop} of $Q$, and we say that $Q$ is a \textbf{loop-less quiver} if it contains no loop. Two arrows $i$ and $j$ are said to be \textbf{parallel} if $\MiNu(i) = \MiNu(j)$, and \textbf{adjacent} if $\MiNu(i)\cap \MiNu(j)\neq \emptyset$. The \textbf{degree} of a vertex $v$ in $Q$ is the number of arrows $i$ in $Q$ such that $v \in \MiNu(i)$. A vertex in $Q$ with degree one is called a \textbf{leaf}, and any arrow $i$ such that $\MiNu(i)$ contains a leaf is called a \textbf{pendant arrow} of $Q$. Observe that $\overline{Q}=(Q_0,\{\MiNu(i)\}_{i \in Q_1})$ is a multi-graph, referred to as \textbf{underlying graph} of $Q$. We say that $Q$ is \textbf{simple}, \textbf{connected} or a \textbf{tree} if so is $\overline{Q}$, and walks of $\overline{Q}$ are also called walks of $Q$. To be precise and fix some notation, by \textbf{walk} of $Q$ we mean an alternating sequence of vertices and arrows in $Q$ of the form,
\[
\alpha=(v_0,i_1,v_1,\ldots,v_{\ell-1},i_{\ell},v_{\ell}),
\]
such that $\MiNu(i_t)=\{v_{t-1},v_t\}$ for $t=1,\ldots,\ell$. The notation $\alpha=i_1^{\epsilon_1}\cdots i_{\ell}^{\epsilon_{\ell}}$ for $\epsilon_t=\pm 1$ is also used, where the symbol $i_t^{+1}$ stands for $\sou(i_t)=v_{t-1}$ and $\tar(i_t)=v_t$, while $i_t^{-1}$ stands for $\sou(i_t)=v_{t}$ and $\tar(i_t)=v_{t-1}$ (that is, the sign $\epsilon_t$ denotes the direction in which the arrow $i_t$ is found along the walk $\alpha$). The non-negative integer $\ell$ is called \textbf{length} of the walk $\alpha$, and we call vertex $v_0$ (resp. vertex $v_{\ell}$) the \textbf{starting vertex} (resp. the \textbf{ending} vertex) of $\alpha$. A walk with length zero is called a \textbf{trivial walk}. As usual we omit the exponent $+1$ in our notation of walks, and abusing notation we set $\sou(\alpha)=v_0$ and $\tar(\alpha)=v_{\ell}$. Observe that the reversed sequence
\[
(v_{\ell},i_{\ell},v_{\ell-1},\ldots,v_1,i_1,v_0),
\]
is also a walk in $Q$, referred to as \textbf{reverse walk} of $\alpha$ and denoted by $\alpha^{-1}$.  With our notation we have
\[
(i_1^{\epsilon_1}\cdots i_{\ell}^{\epsilon_{\ell}})^{-1}=i_{\ell}^{-\epsilon_{\ell}}\cdots i_1^{-\epsilon_1},
\]
(in particular $\sou(i^{-1})=\tar(t)$ and $\tar(i^{-1})=\sou(i)$).

\medskip
Let $Q=(Q_1,Q_0)$ be a quiver (we will usually exclude the source and target functions $\sou$ and $\tar$ from the notation of $Q$). For a vertex $v \in Q_0$, we denote by $Q^{(v)}$ the quiver obtained from $Q$ by removing the vertex $v$, as well as all arrows containing it (that is, all arrows $i$ with $v \in \MiNu(i)$). Similarly, if $i \in Q_1$ is an arrow of $Q$, we denote by $Q^{(i)}$ the quiver obtained from $Q$ by removing the arrow $i$.

\medskip
We will need the following transformations of a loop-less quiver $Q=(Q_0,Q_1)$. Throughout the text we assume that both sets $Q_0$ and $Q_1$ are totally ordered.
\begin{enumerate}
 \item \label{LbOne} \textit{Arrow inversion.} Let $C$ be a set of arrows in $Q$, and take $Q\mathcal{V}_C$ to be the quiver obtained from $Q$ by inverting the direction of all arrows in $C$.

 \item \textit{Swapping.} Given two arrows $i \neq j$ in $Q$, define the new quiver $Q\mathcal{S}_{ij}$ as the quiver obtained from $Q$ by \textbf{swapping} arrows $i$ and $j$ (therefore, swapping their positions in the total ordering of $Q_1$).

 \item \textit{(Quiver) Flation.}  Let $i$ and $j$ be adjacent arrows in $Q$, and choose signs $\epsilon_i$ and $\epsilon_j$ such that $\alpha=i^{\epsilon_i}j^{\epsilon_j}$ is a walk in $Q$. Consider the quiver $Q'$ obtained from $Q$ by replacing $i$ by a new arrow $i'$ having $\sou(i')=\sou(\alpha)$ and $\tar(i')=\tar(\alpha)$ if $\epsilon_i=1$, and $\sou(i')=\tar(\alpha)$ and $\tar(i')=\sou(\alpha)$ if $\epsilon_i=-1$. The new arrow $i'$ takes the place of the deleted arrow $i$ in the ordering of $Q_1$. We will use the notation $Q'=Q\mathcal{T}_{ij}^{\epsilon}$ where $\epsilon=-\epsilon_i\epsilon_j$, and say that $\mathcal{T}_{ij}^{\epsilon}$ is a \textbf{flation} for the quiver $Q$. If $i$ and $j$ are non-adjacent arrows, we take $Q\mathcal{T}_{ij}^{0}=Q$.

 \item \textit{$FS$-transformation.} By \textbf{$FS$-transformation} of a quiver $Q$ with respect to the ordered pair of different arrows $(i,j)$, we mean the new quiver $Q\FS^{\epsilon}_{ij}$ given by
\[
Q\FS^{\epsilon}_{ij}=(Q\mathcal{T}^{\epsilon}_{ij})\mathcal{S}_{ij}.
\]
\end{enumerate}

The analogous of Remark~\ref{R:03} can be stated as follows.

\begin{remark}\label{R:06}
Let $Q$ be a loop-less quiver.
\begin{itemize}
\item[a)] Let $\mathcal{T}^{\epsilon}_{ij}$ be a flation for $Q$. Then $Q'=Q\mathcal{T}^{\epsilon}_{ij}$ is a loop-less quiver if and only if $i$ and $j$ are non-parallel arrows, and in that case $Q'\mathcal{T}^{-\epsilon}_{ij}=Q$.
\item[b)] Let $\FS^{\epsilon}_{ij}$ be a $FS$-transformation for $Q$. Then $Q'=Q\FS^{\epsilon}_{ij}$ is a loop-less quiver if and only if $i$ and $j$ are non-parallel arrows, and in that case $Q'\FS^{-\epsilon}_{ji}=Q$.
\end{itemize}
Moreover, the four types of transformations described above preserve the number of vertices and arrows of a quiver, as well as its connectedness if $i$ and $j$ are non-parallel arrows.
\end{remark}
\begin{proof}
Let $i$ and $j$ be non-parallel arrows in $Q$, and notice that by construction the corresponding arrows $i'$ and $j'$ in $Q'=Q\mathcal{T}^{\epsilon}_{ij}$ are non-parallel (for $i'$ takes the place of the walk $i^{\epsilon_i}j^{\epsilon_j}$ of $Q$, and $j'$ remains unchanged). Moreover, $i$ and $j$ are adjacent in $Q$ if and only if $i'$ and $j'$ are adjacent in $Q'$. Noticing that now $(i')^{\epsilon_i}(j')^{-\epsilon_j}$ is a walk in $Q'$, then $\mathcal{T}_{i'j'}^{-\epsilon}$ is a flation for $Q'$, and a second application of the construction above shows that $(Q')\mathcal{T}^{-\epsilon}_{i'j'}=Q$ (subsequently labels $i'$ and $j'$ are replaced by $i$ and $j$).  In particular,
\[
(Q\FS^{\epsilon}_{ij})\FS^{-\epsilon}_{ji}=([Q'S_{ij}]\mathcal{T}^{-\epsilon}_{ji})S_{ij}=([Q'\mathcal{T}^{-\epsilon}_{ij}]\mathcal{S}_{ij})S_{ij}=Q,
\]
since clearly $[Q'S_{ij}]\mathcal{T}^{-\epsilon}_{ji}=[Q'\mathcal{T}^{-\epsilon}_{ij}]\mathcal{S}_{ij}$. This shows $(a)$ and $(b)$.

\medskip
Now, if $Q'$ is the quiver obtained from $Q$ after applying any of the four types of transformations above, then $Q'_0=Q_0$, and $|Q'_1|=|Q_1|$. The claim on connectedness is clear for arrow inversions and swappings, and follows for $\mathcal{T}^{\epsilon}_{ij}$ and $\FS^{\epsilon}_{ij}$ using the arguments above.
\end{proof}

\section{Quivers and their incidence bigraphs} \label{S(C)}

The techniques presented in this section were introduced in a slightly wider context in~\cite{jaJ2018}, following ideas from Barot~\cite{B99} and von H{\"o}hne~\cite{vH88}.

\subsection{Incidence quadratic forms} \label{S(C):Iqf}

Consider an arbitrary quiver $Q$ with $|Q_0|=m$ vertices and $|Q_1|=n$ arrows, both sets $Q_0$ and $Q_1$ with fixed total orderings. The \textbf{(vertex-arrow) incidence matrix of $Q$} is the $m \times n$ matrix $I(Q)$ with columns $I_i=\bas_{\sou(i)}-\bas_{\tar(i)}$ for $i \in Q_1$ (observe that $I_i=0$ if and only if $i$ is a loop in $Q$), cf.~\cite{tZ08}.  For a loop-less quiver $Q$, it will be useful to consider the \textbf{incidence function} $\sigma_Q:Q_0\times Q_1 \to \{+1,0,-1\}$ given by
\begin{equation*}
\sigma_Q(v,i) = \left\{
\begin{array}{l l}
+1, & \text{if $v$ is the source of arrow $i$},\\
-1, & \text{if $v$ is the target of arrow $i$},\\
0, & \text{otherwise}.
\end{array} \right.
\end{equation*}
Clearly, $I(Q)=[\sigma_Q(v,i)]_{v \in Q_0}^{i \in Q_1}$. For a non-trivial walk $\alpha=i_1^{\epsilon_1}\cdots_{\ell}^{\epsilon_{\ell}}$ in $Q$, we use the notation $I_{\alpha}=\sum_{t=1}^{\ell}\epsilon_tI_{i_t}$. Observe that $I_{\alpha}$ is the telescopic sum
\[
I_{\alpha}=\epsilon_1(\bas_{\sou(i_1)}-\bas_{\tar(i_1)})+\ldots+\epsilon_{\ell}(\bas_{\sou(i_{\ell})}-\bas_{\tar(i_{\ell})})=\bas_{\sou(\alpha)}-\bas_{\tar(\alpha)}.
\]

\begin{definition}\label{D:new}
Let $Q$ be a quiver with $m \geq 1$ vertices and $n \geq 0$ arrows.
\begin{itemize}
 \item[a)] The square matrix $G_Q=I(Q)^{\tr}I(Q)$ is defined to be the \textbf{symmetric Gram matrix of $Q$}.
 \item[b)] Let $\widecheck{G}_Q$ be the (unique) upper triangular matrix such that $G_Q=\widecheck{G}_Q+\widecheck{G}_Q^{\tr}$, called \textbf{(upper) triangular Gram matrix} of $Q$.
 \item[c)] The quadratic form $q_Q:\Z^n \to \Z$ given by
\[
q_{Q}(x)=\frac{1}{2}x^{\tr}I(Q)^{\tr}I(Q)x=\frac{1}{2}||I(Q)x||^2,
\]
is defined to be the \textbf{quadratic form associated to $Q$}.
\end{itemize}
\end{definition}

Notice that the matrix $\widecheck{G}_Q$ is the standard matrix morsification of $q_Q$ in the sense of Simson~\cite{dS13a}.

\begin{remark}\label{R:new}
Observe that $G_Q$ and $\widecheck{G}_Q$ do not depend on the order given to the set of vertices $Q_0$. Indeed, if $Q'$ is a copy of $Q$ with different ordering of its vertices, there is a permutation $m \times m$ matrix $P$ such that $I(Q')=PI(Q)$, and therefore $I(Q')^{\tr}I(Q')=I(Q)^{\tr}P^{\tr}PI(Q)=I(Q)^{\tr}I(Q)$.
\end{remark}

To present a graphical description of $q_Q$, we need the following definition.

\begin{definition}\label{D:07}
Let $Q$ be a loop-less quiver with $m \geq 1$ vertices and $n \geq 0$ arrows. The \textbf{incidence (bi)graph $\Inc(Q)$} of a loop-less quiver $Q$ is defined as follows. The set of vertices $\Inc(Q)_0$ of $\Inc(Q)$ is the set of arrows of $Q$ (that is, $\Inc(Q)_0=Q_1$). The signed edges in $\Inc(Q)$ are given as follows:
\begin{itemize}
\itemsep=0.95pt
 \item[a)] Let $i$ and $j$ be adjacent non-parallel arrows in $Q$. Then the vertices $i$ and $j$ are connected by exactly one edge $a$ in $\Inc(Q)$, with sign $\sigma(a)$ given by $\sigma(a)=(-1)\sigma_Q(v,i)\sigma_Q(v,j)$ where $v$ is the (unique) vertex in $Q$ with $\{v\}=\MiNu(i)\cap \MiNu(j)$.

 \item[b)] Let $i$ and $j$ be parallel arrows in $Q$. Then the vertices $i$ and $j$ are connected by exactly two edges $a$ and $b$ in $\Inc(Q)$, with signs given by
 \[
 \sigma(a)=(-1)\sigma_Q(v,i)\sigma_Q(v,j) \quad \text{and} \quad \sigma(b)=(-1)\sigma_Q(v',i)\sigma_Q(v',j),
 \]
where $v$ and $v'$ are the vertices in $Q$ such that $\MiNu(i)=\{v,v'\}=\MiNu(j)$. Notice that $\sigma(a)=\sigma(b)$.

 \item[c)] If $i$ and $j$ are non-adjacent arrows in $Q$, then the vertices $i$ and $j$ are not adjacent in $\Inc(Q)$.
\end{itemize}
\end{definition}
See~\cite{jaJ2018} for an alternative and more general construction of $\Inc(Q)$ (compare also with the signed line graph construction in~\cite{tZ08,BS16}). The following lemma contains a graphical description of the quadratic form $q_Q$.

\begin{lemma}\label{L:08}
For any loop-less quiver $Q$ with $n$ arrows, the quadratic form $q_Q:\Z^n \to \Z$ of $Q$ is a unit form, and
\begin{itemize}
\itemsep=0.95pt
 \item[a)] $\widecheck{G}_{Q}=\Id-\Adj(\Inc(Q))$.
 \item[b)] $q_Q=q_{\Inc(Q)}$.
 \item[c)] $q_Q$ is non-negative.
 \item[d)] $q_Q$ is connected if and only if $Q$ is a connected quiver.
\end{itemize}
\end{lemma}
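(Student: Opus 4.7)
The plan is to unpack $G_Q=I(Q)^\tr I(Q)$ entrywise and match it against Definition~\ref{D:07}, then bootstrap (b)--(d) from (a). The key identity is that column $i$ of $I(Q)$ can be written as $I_i=\sum_{v\in Q_0}\sigma_Q(v,i)\bas_v$, so
\[
(G_Q)_{ij}=I_i^{\tr}I_j=\sum_{v\in Q_0}\sigma_Q(v,i)\sigma_Q(v,j),
\]
and only vertices in $\MiNu(i)\cap\MiNu(j)$ contribute.

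For (a), I would split the calculation into three cases according to how $i$ and $j$ sit in $Q$. When $i=j$ and $Q$ is loop-less, the sum reduces to $\sigma_Q(\sou(i),i)^2+\sigma_Q(\tar(i),i)^2=2$, so after writing $G_Q=\widecheck{G}_Q+\widecheck{G}_Q^{\tr}$ with $\widecheck{G}_Q$ upper triangular, the diagonal of $\widecheck{G}_Q$ consists of ones. For $i\neq j$ sharing a unique vertex $v$, the sum collapses to $\sigma_Q(v,i)\sigma_Q(v,j)$, which by Definition~\ref{D:07}(a) equals $-\sigma(a)$ where $a$ is the unique edge of $\Inc(Q)$ between $i$ and $j$; for parallel $i,j$ with $\MiNu(i)=\MiNu(j)=\{v,v'\}$, the sum becomes $\sigma_Q(v,i)\sigma_Q(v,j)+\sigma_Q(v',i)\sigma_Q(v',j)$, matching minus the contribution of the two parallel edges supplied by Definition~\ref{D:07}(b); for non-adjacent $i,j$ both sides vanish. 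In every case the off-diagonal $(i,j)$-entry of $G_Q$ equals $-\Adj(\Inc(Q))_{ij}$, and passing to the upper triangular part yields $\widecheck{G}_Q=\Id-\Adj(\Inc(Q))$.

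Parts (b) and (c) are then essentially free. Since $\widecheck{G}_Q$ has ones on its diagonal, $q_Q$ is a unit form with $\widecheck{G}_{q_Q}=\widecheck{G}_Q$; combining (a) with the definition $\widecheck{G}_{\Inc(Q)}=\Id-\Adj(\Inc(Q))$ from Section~\ref{S(I):bigraph} and the bijection between loop-less bigraphs and unit forms, we get $q_Q=q_{\Inc(Q)}$, proving (b). For (c), the identity $q_Q(x)=\tfrac{1}{2}\|I(Q)x\|^2$ from Definition~\ref{D:new}(c) is manifestly non-negative on $\Z^n$ (indeed on $\mathbb{R}^n$).

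For (d), by (b) and Remark~\ref{R:02} it suffices to show that the bigraph $\Inc(Q)$ is connected if and only if $Q$ is. Given a walk $(v_0,i_1,v_1,\ldots,i_\ell,v_\ell)$ in $Q$, consecutive arrows $i_t$ and $i_{t+1}$ share the vertex $v_t$, hence are adjacent in $\Inc(Q)$, so any walk of $Q$ joining two arrows yields a walk of $\Inc(Q)$ between them; conversely, a walk in $\Inc(Q)$ from $i$ to $j$ reads off a sequence of pairwise adjacent arrows in $Q$, which splices into a walk of $Q$ connecting any vertex of $i$ to any vertex of $j$. The main technical obstacle is the sign bookkeeping in (a), particularly the parallel-arrow case, where one must verify that the two summands $\sigma_Q(v,i)\sigma_Q(v,j)$ and $\sigma_Q(v',i)\sigma_Q(v',j)$ have the same sign (since $i$ and $j$ are parallel, reversing the role of source and target flips both $\sigma_Q(v,\cdot)$'s simultaneously) and so faithfully match the doubled adjacency entry of $\Inc(Q)$ prescribed by Definition~\ref{D:07}(b).
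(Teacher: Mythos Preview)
Your argument is correct and follows essentially the same approach as the paper: both proofs compute the entries of $I(Q)^{\tr}I(Q)$ and match them against Definition~\ref{D:07} to obtain $\widecheck{G}_Q=\Id-\Adj(\Inc(Q))$, then derive (b)--(d) from this and the explicit formula $q_Q(x)=\tfrac{1}{2}\|I(Q)x\|^2$. The only difference is presentational: the paper compresses your case analysis into the single observation that $\Adj(\Inc(Q))_{ij}=-I_i^{\tr}I_j$ for $i<j$, and asserts the connectedness equivalence of $Q$ and $\Inc(Q)$ ``by construction'' where you spell out the walk correspondence.
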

\begin{proof}
The non-negativity of $q_Q$ follows directly from the definition $G_{q_Q}=I(Q)^{\tr}I(Q)$. The diagonal entries of $I(Q)^{\tr}I(Q)$ are the squared norms of the columns $I_i$ of $I(Q)$. Since $Q$ has no loop, each column $I_i=\bas_{\sou(i)}-\bas_{\tar(i)}$ has squared norm two, which shows that $q_{Q}$ is a unit form.

\medskip
Observe that, by definition of $\Inc(Q)$, we have
\begin{equation*}
\Adj(\Inc(Q))= \left\{
\begin{array}{l l}
-I_i^{\tr}I_j, & \text{if $i < j$},\\
0, & \text{if $i \geq j$}.
\end{array} \right.
\end{equation*}
In particular, since $Q$ and $\Inc(Q)$ have no loop, we have
\[
2\Id-[\Adj(\Inc(Q))+\Adj(\Inc(Q))^{\tr}]=I(Q)^tI(Q),
\]
that is, $\widecheck{G}_Q=\Id-\Adj(\Inc(Q))$. Therefore $q_Q=q_{\Inc(Q)}$.

\medskip
By construction, the incidence bigraph $\Inc(Q)$ is connected if and only if $Q$ is connected. Then the last claim follows from Remark~\ref{R:02}.
\end{proof}

\subsection{The rank of an incidence matrix} \label{S(C):rk}

The following result is well known (cf.~\cite{tZ08}), and easy to prove.

\begin{lemma}\label{L:09}
For a connected loop-less quiver $Q$ we have $\Rnk(I(Q))=|Q_0|-1$.
\end{lemma}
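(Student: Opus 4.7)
The claim is a standard fact about incidence matrices of (undirected) graphs, lightly repackaged for quivers. My plan is to prove matching upper and lower bounds on $\Rnk(I(Q))$, both obtained by analyzing the left kernel $\{\lambda \in \Z^{|Q_0|} \mid \lambda^{\tr}I(Q)=0\}$ (equivalently, working over $\Q$ and using $\Rnk(I(Q)) = |Q_0|-\dim\ker(I(Q)^{\tr})$).

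For the upper bound $\Rnk(I(Q)) \leq |Q_0|-1$, I would simply observe that since $Q$ is loop-less, every column $I_i = \bas_{\sou(i)}-\bas_{\tar(i)}$ has exactly one entry equal to $+1$ and one equal to $-1$. Hence the all-ones row vector $\mathbf{1}^{\tr}=(1,\ldots,1)$ satisfies $\mathbf{1}^{\tr}I(Q)=0$, producing a nontrivial linear relation among the rows of $I(Q)$, so the row rank is at most $|Q_0|-1$.

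For the matching lower bound, I would show that the left kernel is in fact one-dimensional, spanned by $\mathbf{1}$. Suppose $\lambda=(\lambda_v)_{v \in Q_0}$ satisfies $\lambda^{\tr}I(Q)=0$. Reading the $i$-th coordinate of this equation gives $\lambda_{\sou(i)}=\lambda_{\tar(i)}$ for every arrow $i \in Q_1$. The key step is to use connectedness of $Q$: for any two vertices $v,w \in Q_0$ there is a walk $\alpha=(v_0,i_1,v_1,\ldots,i_{\ell},v_{\ell})$ in $\overline{Q}$ with $v_0=v$ and $v_{\ell}=w$, and propagating the equality $\lambda_{v_{t-1}}=\lambda_{v_t}$ along the walk yields $\lambda_v=\lambda_w$. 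Thus $\lambda$ is a scalar multiple of $\mathbf{1}$, so $\dim\ker(I(Q)^{\tr})=1$ and $\Rnk(I(Q))=|Q_0|-1$.

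I do not foresee a real obstacle: the only non-formal ingredient is invoking connectedness to propagate the constancy of $\lambda$ along walks, which is immediate from the definition of walk already set up in the paper. Since $\Rnk$ is independent of whether we work over $\Z$ or $\Q$ (both agree with the rank of the associated rational map), the argument transfers cleanly to the integer setting the paper uses.
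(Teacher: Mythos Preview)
Your argument is correct, but it proceeds by a genuinely different route than the paper. The paper works with the \emph{columns} of $I(Q)$: it first shows by induction on $|Q_0|$ that when $Q$ is a tree the columns are linearly independent (removing a leaf and its pendant arrow), and then for a general connected $Q$ picks a spanning tree $Q'$ to get $\Rnk(I(Q))\geq |Q_0|-1$, while the telescopic identity $I_{\alpha}=\bas_{\sou(\alpha)}-\bas_{\tar(\alpha)}$ along the unique walk in $Q'$ joining the endpoints of any non-tree arrow gives the reverse inequality. You instead analyze the \emph{rows}, computing the left kernel directly: the all-ones vector gives the upper bound, and connectedness forces any left kernel vector to be constant. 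Your approach is shorter and avoids both the induction and the spanning-tree construction. The paper's approach, on the other hand, yields as a byproduct an explicit column basis (the arrows of any spanning tree) together with the expression of each remaining column as a signed sum along a walk; this telescopic description is in keeping with how walks and the vectors $I_{\alpha}$ are used elsewhere in the paper.
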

\begin{proof}
Assume first that $Q$ is a tree (that is, that $|Q_1|=|Q_0|-1$), and proceed by induction on $m=|Q_0|$ (the cases $m \leq 2$ are clear). Assume that $m>2$ and take a vertex $v$ in $Q$ with degree one. Then there is a unique arrow $i$ in $Q$ containing $v$, and the restriction $Q^{(v)}$ is a tree. By induction hypothesis, the set of columns $\{I_j \mid j \in Q_1-\{i\}\}$ is linearly independent. Notice that $v$ does not belong to the support of $I_j$ for $j \neq i$, which implies that the whole set $\{I_j\}_{j \in Q_1}$ is linearly independent, which completes the induction step.

\medskip
Assume now that $Q$ is an arbitrary connected quiver with $n$ arrows, and choose a spanning tree $Q'$ of $Q$. By the first part of the proof, the set of columns $\{I_j\}$ with $j \in Q'_1$ is linearly independent. In particular $\Rnk(I(Q))\geq m-1$. Take now an arrow $i \in Q_1-Q'_1$, and let $v$ and $w$ be respectively the source and target of $i$. Then there is a (unique) walk $\alpha=i_1^{\epsilon_1}\cdots i_{\ell}^{\epsilon_{\ell}}$ in $Q'$ with starting vertex $v$ and ending vertex $w$. We have
\[
I_{\alpha}=\sum_{i=1}^{\ell}\epsilon_tI_{i_t}=\bas_{\sou(\alpha)}-\bas_{\tar(\alpha)}=\bas_v-\bas_w=I_i,
\]
which shows that $\Rnk(I(Q))\leq m-1$, hence the result.
\end{proof}

\begin{corollary}\label{C:10}
For a connected loop-less quiver $Q$ we have $\CRnk(q_{Q})=|Q_1|-|Q_0|+1$.
\end{corollary}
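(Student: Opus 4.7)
The plan is to chain together the definition of corank with Lemma~\ref{L:09}, using only the standard linear-algebra identity $\Rnk(A^{\tr}A) = \Rnk(A)$ for real matrices.

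First I would unwind definitions. By the definition of corank recalled in Section~\ref{S(I):def}, we have $\CRnk(q_Q) = |Q_1| - \Rnk(G_{q_Q})$, where $G_{q_Q}$ is the symmetric Gram matrix. By Definition~\ref{D:new}$(c)$ together with Lemma~\ref{L:08} (which confirms $q_Q$ is a unit form with $G_{q_Q} = 2\widecheck{G}_Q - \ldots$, actually directly from $G_{q_Q} = I(Q)^{\tr}I(Q)$), we get
\[
\CRnk(q_Q) = |Q_1| - \Rnk\bigl(I(Q)^{\tr}I(Q)\bigr).
\]

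The key intermediate step is the identity $\Rnk(I(Q)^{\tr}I(Q)) = \Rnk(I(Q))$. I would justify this by the standard observation that for any real matrix $A$, one has $\Ker(A^{\tr}A) = \Ker(A)$: the inclusion $\Ker(A) \subseteq \Ker(A^{\tr}A)$ is trivial, and conversely if $A^{\tr}Ax = 0$ then $\|Ax\|^2 = x^{\tr}A^{\tr}Ax = 0$, forcing $Ax = 0$. Since $I(Q)$ has integer (hence real) entries, the rank-nullity theorem yields $\Rnk(I(Q)^{\tr}I(Q)) = \Rnk(I(Q))$. Here the rank is computed over $\Q$ (equivalently $\R$), which is the appropriate notion since $\Rnk(G_q)$ in the definition of corank coincides with the rational rank of the Gram matrix.

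Finally, applying Lemma~\ref{L:09} to the connected loop-less quiver $Q$ gives $\Rnk(I(Q)) = |Q_0| - 1$, so substituting yields
\[
\CRnk(q_Q) = |Q_1| - (|Q_0| - 1) = |Q_1| - |Q_0| + 1,
\]
as claimed. I do not anticipate any obstacle: the proof is essentially a one-line chain of equalities, with the only non-formal ingredient being the rank identity $\Rnk(A^{\tr}A) = \Rnk(A)$, which is a textbook fact. The connectedness hypothesis is used exclusively through Lemma~\ref{L:09}; without it, $\Rnk(I(Q))$ would equal $|Q_0|$ minus the number of connected components of $Q$.
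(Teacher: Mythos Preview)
Your proof is correct and follows essentially the same approach as the paper: both use the definition $\CRnk(q_Q)=|Q_1|-\Rnk(G_{q_Q})$, the identity $\Rnk(I(Q)^{\tr}I(Q))=\Rnk(I(Q))$, and Lemma~\ref{L:09}. The only difference is that you supply a direct kernel argument for the rank identity, whereas the paper simply cites a reference for it.
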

\begin{proof}
Since $q_Q$ is a non-negative unit form in $|Q_1|$ variables and $\Rnk(G_{q_Q})=\Rnk(I(Q))$ (see for instance~\cite{fZ99}), by the lemma above we have the result (for $\CRnk(q)=n-\Rnk(G_q)$).
\end{proof}

\subsection{Connection between the elementary transformations of $Q$ and $q_Q$} \label{S(C):trans}

\begin{lemma}\label{L:11}
Let $Q$ be a loop-less quiver with arrows $i \neq j$. Then, for $\epsilon \in \{+1,0,-1\}$, the linear transformation $T^{\epsilon}_{ij}$ is a flation for $q_Q$ if and only if $\mathcal{T}^{\epsilon}_{ij}$ is a quiver flation for $Q$.
\end{lemma}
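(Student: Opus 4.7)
The plan is to reduce both the flation condition for $q_Q$ and the quiver-flation condition for $Q$ to a single sign identity based on the incidence function $\sigma_Q$. The observation that unlocks the proof is that, for $i<j$, the definition $G_{q_Q} = I(Q)^{\tr}I(Q)$ together with $I_k = \bas_{\sou(k)} - \bas_{\tar(k)} = \sum_{v} \sigma_Q(v,k)\bas_v$ yields
\[
(q_Q)_{ij} = (\widecheck{G}_{q_Q})_{ij} = I_i^{\tr} I_j = \sum_{v \in Q_0} \sigma_Q(v,i)\,\sigma_Q(v,j).
\]
This sum is supported on $\MiNu(i) \cap \MiNu(j)$, so it vanishes precisely when $i$ and $j$ are non-adjacent in $Q$. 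In particular $\epsilon = \sgn((q_Q)_{ij}) = 0$ iff $i$ and $j$ are non-adjacent, which matches the convention $Q\mathcal{T}^0_{ij} = Q$ used in that case and gives the equivalence for $\epsilon = 0$.

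For adjacent non-parallel arrows there is a unique common vertex $v$, and the sum collapses to $(q_Q)_{ij} = \sigma_Q(v,i)\sigma_Q(v,j) \in \{\pm 1\}$. The concatenation $\alpha = i^{\epsilon_i}j^{\epsilon_j}$ is a walk in $Q$ iff $\tar(i^{\epsilon_i}) = v = \sou(j^{\epsilon_j})$; unpacking the walk-direction convention, this is equivalent to $\sigma_Q(v,i) = -\epsilon_i$ and $\sigma_Q(v,j) = \epsilon_j$. Substituting gives
\[
\sgn\bigl((q_Q)_{ij}\bigr) = \sigma_Q(v,i)\sigma_Q(v,j) = -\epsilon_i\epsilon_j,
\]
which is precisely the exponent $\epsilon$ attached to the quiver flation $\mathcal{T}^{\epsilon}_{ij}$.

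The parallel case is handled analogously: $\MiNu(i)\cap \MiNu(j) = \{v,v'\}$ and a direct check shows that the two contributions $\sigma_Q(v,i)\sigma_Q(v,j)$ and $\sigma_Q(v',i)\sigma_Q(v',j)$ always have the same sign, so $(q_Q)_{ij} = \pm 2$. Enumerating the two admissible walks $i^{\epsilon_i}j^{\epsilon_j}$ in the same-direction and opposite-direction subcases shows that each gives $-\epsilon_i\epsilon_j$ equal to $\sgn((q_Q)_{ij})$. The only real obstacle is notational bookkeeping: keeping the sign conventions of $\sigma_Q$ and the walk exponents $\epsilon_k$ aligned via the identity $\sigma_Q(\tar(k^{\epsilon_k}),k) = -\epsilon_k$. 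Once this is in hand, the three cases combine to give $\sgn((q_Q)_{ij}) = \epsilon$ exactly when $\mathcal{T}^{\epsilon}_{ij}$ is a quiver flation for $Q$, which is the claimed equivalence.
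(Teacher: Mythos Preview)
Your proof is correct and follows essentially the same route as the paper's: in both cases the argument reduces to checking, in each of the three adjacency configurations of Definition~\ref{D:07}, that $\sgn((q_Q)_{ij})$ coincides with the sign $-\epsilon_i\epsilon_j$ attached to a walk $i^{\epsilon_i}j^{\epsilon_j}$ through a common vertex. The only cosmetic difference is that you compute $(q_Q)_{ij}=I_i^{\tr}I_j=\sum_v\sigma_Q(v,i)\sigma_Q(v,j)$ directly from $G_{q_Q}=I(Q)^{\tr}I(Q)$, whereas the paper reads off the same value via Lemma~\ref{L:08} and the incidence-graph description; these are two phrasings of the same calculation.
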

\begin{proof}
Take $q=q_Q$. Recall that $T^{\epsilon}_{ij}$ is a flation for $q$ if $\epsilon=\sgn(q_{ij})$. On the other hand, $\mathcal{T}^{\epsilon}_{ij}$ is a flation for $Q$ if $\epsilon=-\epsilon_i\epsilon_j$, where $i^{\epsilon_i}j^{\epsilon_j}$ is a walk of $Q$, when the arrows $i$ and $j$ are adjacent, and $\epsilon=0$ otherwise. Using Lemma~\ref{L:08}, we simply compute $q_{ij}$ for each of the cases in Definition~\ref{D:07}:
\begin{itemize}
 \item[a)] Assume $\MiNu(i) \cap \MiNu(j)=\{v\}$. Then $q_{ij}=\sigma_Q(v,i)\sigma_Q(v,j)$, and $i^{-\sigma_Q(v,i)}j^{\sigma_Q(v,j)}$ is a walk of $Q$. Therefore $\sgn(q_{ij})=\sigma_Q(v,i)\sigma_Q(v,j)=-\epsilon_i\epsilon_j$.
 \item[b)] Assume $\MiNu(i) \cap \MiNu(j)=\{v,w\}$. Then $|q_{ij}|=2$ and $\sgn(q_{ij})=\sigma_Q(v,i)\sigma_Q(v,j)=\sigma_Q(w,i)\sigma_Q(w,j)$. As before, $i^{-\sigma_Q(v,i)}j^{\sigma_Q(v,j)}$ is a walk of $Q$, and $\sgn(q_{ij})=\sigma_Q(v,i)\sigma_Q(v,j)=-\epsilon_i\epsilon_j$.
 \item[c)] If $i$ and $j$ are non-adjacent, then $q_{ij}=0$ and $\epsilon=0$.
\end{itemize}
This completes the proof.
\end{proof}

\begin{proposition}\label{P:12}
Let $Q$ be a loop-less quiver, and consider an elementary transformation $\mathcal{A} \in \{\mathcal{V}_C,\mathcal{S}_{ij}$, $\mathcal{T}^{\epsilon}_{ij},\FS^{\epsilon}_{ij}\}$ for $Q$ (with $C$ a subset of arrows, and arrows $i \neq j$) with corresponding elementary linear transformation $A \in \{ V_C,S_{ij},T^{\epsilon}_{ij},\FSq^{\epsilon}_{ij}\}$. Then
\[
I(Q\mathcal{A})=I(Q)A \quad \text{and} \quad q_{Q\mathcal{A}}=q_QA.
\]
\end{proposition}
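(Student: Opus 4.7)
The plan is to prove the matrix identity $I(Q\mathcal{A}) = I(Q)A$ separately for each of the four transformation types, and then derive the quadratic form identity as a formal consequence using Definition~\ref{D:new}$(c)$. Indeed, once $I(Q\mathcal{A}) = I(Q)A$ is established, then
\[
q_{Q\mathcal{A}}(x) = \tfrac{1}{2}\|I(Q\mathcal{A})x\|^2 = \tfrac{1}{2}\|I(Q)(Ax)\|^2 = q_Q(Ax) = (q_QA)(x),
\]
for any $x \in \Z^n$, which gives the second identity. So the whole content is in the first identity, and since $I(Q) = [I_1|\cdots|I_n]$ one only needs to compare the $k$-th columns on each side for every $k$.

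For arrow inversion $\mathcal{V}_C$, inverting an arrow $i \in C$ swaps $\sou(i)$ and $\tar(i)$, changing $I_i = \bas_{\sou(i)} - \bas_{\tar(i)}$ into $-I_i$; since $V_C(\bas_i) = -\bas_i$ for $i \in C$ and $\bas_i$ otherwise, both sides agree column by column. Swapping $\mathcal{S}_{ij}$ is equally immediate: the $i$-th and $j$-th columns of $I(Q)$ are exchanged, which is exactly the action of $I(Q)S_{ij}$. These two cases require no computation beyond unwinding the definitions.

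The real content is the flation case. If $i$ and $j$ are non-adjacent, both $\mathcal{T}^0_{ij}$ and $T^0_{ij}$ act as the identity, so there is nothing to check. Suppose now $i,j$ are adjacent, and let $\alpha = i^{\epsilon_i}j^{\epsilon_j}$ be the walk in $Q$, so $\epsilon = -\epsilon_i\epsilon_j$. Using the telescoping identity $I_\alpha = \bas_{\sou(\alpha)} - \bas_{\tar(\alpha)}$ from Section~\ref{S(C):Iqf}, the replacement arrow $i'$ in $Q\mathcal{T}^\epsilon_{ij}$ has incidence column
\[
I_{i'} = \bas_{\sou(i')} - \bas_{\tar(i')} = \epsilon_i\,I_\alpha = \epsilon_i(\epsilon_i I_i + \epsilon_j I_j) = I_i + \epsilon_i\epsilon_j I_j = I_i - \epsilon I_j,
\]
where the factor $\epsilon_i$ accounts for the two orientation conventions $\epsilon_i = \pm 1$ prescribing $\sou(i')$ and $\tar(i')$. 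On the other hand, $T^\epsilon_{ij}(\bas_i) = \bas_i - \epsilon\bas_j$ and $T^\epsilon_{ij}(\bas_k) = \bas_k$ for $k \neq i$, so the $i$-th column of $I(Q)T^\epsilon_{ij}$ is $I_i - \epsilon I_j$ and the remaining columns coincide with those of $I(Q)$. This matches $I(Q\mathcal{T}^\epsilon_{ij})$ column by column. The only (mild) obstacle here is to make sure the sign bookkeeping $\epsilon = -\epsilon_i\epsilon_j$ is compatible across the four possibilities for $(\epsilon_i,\epsilon_j)$, but the computation above handles them uniformly.

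Finally, the $FS$-transformation case is a formal corollary: by definition $Q\FS^\epsilon_{ij} = (Q\mathcal{T}^\epsilon_{ij})\mathcal{S}_{ij}$ and $\FSq^\epsilon_{ij} = T^\epsilon_{ij}S_{ij}$, so applying the already-established cases successively yields
\[
I(Q\FS^\epsilon_{ij}) = I(Q\mathcal{T}^\epsilon_{ij})S_{ij} = I(Q)T^\epsilon_{ij}S_{ij} = I(Q)\FSq^\epsilon_{ij},
\]
which concludes the proof.
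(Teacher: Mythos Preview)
Your proof is correct and follows essentially the same approach as the paper: reduce to the incidence-matrix identity columnwise, dispose of $\mathcal{V}_C$ and $\mathcal{S}_{ij}$ by inspection, and compute the flation column $I_{i'}=I_i-\epsilon I_j$ (the paper does this via $\sigma_Q(v,i)\sigma_Q(v,j)$ rather than your walk-based $\epsilon_i I_\alpha$, but the content is identical). You make the $FS$ case explicit as a composition, which the paper leaves implicit, and you derive $q_{Q\mathcal{A}}=q_QA$ via the norm formula rather than the Gram-matrix identity---both are equivalent one-line consequences.
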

\begin{proof}
Notice that the claims for $\mathcal{A}=\mathcal{V}_C$ or $\mathcal{A}=\mathcal{S}_{ij}$ follow directly from the definitions, thus we only need to show the claims for $\mathcal{A}=\mathcal{T}^{\epsilon}_{ij}$.

\medskip
If $i$ and $j$ are non-adjacent arrows, there is nothing to prove. Assume that $i$ and $j$ are adjacent arrows, say $\MiNu(i)=\{v,w\}$ and $\MiNu(j)=\{v,w'\}$. Let $I'_1,I'_2,\ldots$ be the columns of the (vertex-arrow) incidence matrix $I(Q')$ where $Q'=Q\mathcal{T}^{\epsilon}_{ij}$. By definition (see~\ref{S(I):Quiv}), the new arrow $i'$ in $Q'$ satisfies $\MiNu(i')=\{w,w'\}$, and $I'_k=I_k$ for any arrow $k \in Q_1-\{i\}$. Moreover, observe that
\[
\sigma_Q(v,i)I'_i=\sigma_Q(v,i)I_i-\sigma_Q(v,j)I_j, \quad \text{that is,} \quad I'_i=I_i-\sigma_Q(v,i)\sigma_Q(v,j)I_j.
\]

On the other hand, $T^{\epsilon}_{ij}$ is a flation for $q_Q$ if $\epsilon=\sgn((q_Q)_{ij})$. Hence, using Lemma~\ref{L:08} we get $\epsilon=\sigma_Q(v,i)\sigma_Q(v,j)$. Take $I''=I(Q)T^{\epsilon}_{ij}$ with columns $I''_1,I''_2,\ldots$ Then, for $k \in Q_1$ we have
\begin{equation*}
I''_k =I(Q)T^{\epsilon}_{ij}(\bas_k) \left\{
\begin{array}{l l}
I(Q)\bas_k=I_k, & \text{if $k \neq i$},\\
I(Q)(\bas_i-\epsilon\bas_j)=I_i-\epsilon I_j, & \text{if $k=i$}.
\end{array} \right.
\end{equation*}
Therefore $I''_k=I'_k$ for all $k$, which completes the proof (cf. also~\cite[Proposition~5.2$(b)$ and Remark~5.2]{jaJ2018}).

\medskip
For the claims on quadratic forms, taking $q=q_Q$ and $q'=q_{Q\mathcal{A}}$, we have
\[
G_{q'}=I(Q\mathcal{A})^{\tr}I(Q\mathcal{A})=A^{\tr}I(Q)^{\tr}I(Q)A=A^{\tr}G_{q}A,
\]
that is, $q'=qA$.
\end{proof}

\begin{remark}\label{R:13}
For a subset $C$ of arrows in $Q$ we have
\[
q_{Q\mathcal{V}_C} \approx^{V_C} q_{Q},
\]
where $V_C$ is the point inversion of~\ref{S(I):trans} over the indices determined by $C$.
\end{remark}
\begin{proof}
Take $q=q_Q$, $q'=q_{Q\mathcal{V}_C}$ and $V=V_C$. Notice that $V^{\tr}TV$ is an upper (or lower) triangular matrix if and only if so is $T$. This observation and the equality $G_{q'}=V^{\tr}G_{q}V$ show that $\widecheck{G}_{q'}=V\widecheck{G}_{q}V$, that is, $q' \approx^{V} q$.
\end{proof}

\subsection{Admissibility} \label{S(C):adm}

The following observation justifies our considerations on $FS$-transformations.

\begin{lemma}\label{L:14}
Let $q(x)=\sum \limits_{1 \leq i \leq j \leq n}q_{ij}x_ix_j$ be a unit form, and take indices $i \neq j$ such that $|q_{ij}|\leq 1$. Consider the $FS$-transformation $\FSq=\FSq^{\epsilon}_{ij}$  for $q$, and take $q'=q\FSq$. Then $q' \approx^{\FSq} q$ if and only if $q_{ik}=0=q_{kj}$ for any integer $k$ such that $i<k<j$ or $j<k<i$.
\end{lemma}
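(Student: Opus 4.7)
The plan is to reduce the strong congruence $q' \approx^{\FSq} q$ to a statement about the matrix $M := \FSq^{\tr}\widecheck{G}_q \FSq$ being upper triangular, and then verify upper-triangularity entry by entry, using the explicit form of $\FSq$. By Remark~\ref{R:03}$(b)$ the form $q'$ is a unit form, and since weak congruence gives $M+M^{\tr} = \FSq^{\tr} G_q \FSq = G_{q'} = \widecheck{G}_{q'} + \widecheck{G}_{q'}^{\tr}$, the equality $M=\widecheck{G}_{q'}$ is equivalent to $M$ being upper triangular. Thus the whole task becomes: determine exactly when all strictly-below-diagonal entries of $M$ vanish.

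Assume without loss of generality that $i<j$ (the case $j<i$ is entirely symmetric using the convention $q_{ab}=q_{ba}$). First I would write down the columns $c_k := \FSq \bas_k$ of $\FSq = T^{\epsilon}_{ij} S_{ij}$: $c_i = \bas_j$, $c_j = \bas_i - \epsilon\bas_j$, and $c_k = \bas_k$ for $k\neq i,j$. Writing $g_{ab}$ for the entries of $\widecheck{G}_q$ (so $g_{aa}=1$, $g_{ab}=q_{ab}$ for $a<b$, and $g_{ab}=0$ for $a>b$), we get $M_{k\ell} = c_k^{\tr}\widecheck{G}_q c_\ell$, and we must check $M_{k\ell}=0$ for every $k>\ell$.

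I would then split into cases according to whether $\{k,\ell\}$ meets $\{i,j\}$:
\begin{itemize}
 \item If both $k,\ell \notin \{i,j\}$, then $M_{k\ell}=g_{k\ell}=0$ for $k>\ell$.
 \item For the single ``diagonal'' case $k=j$, $\ell=i$, a direct computation gives $M_{ji} = g_{ij}-\epsilon g_{jj} = q_{ij}-\epsilon$, which is $0$ precisely because $|q_{ij}|\le 1$ forces $\epsilon = \sgn(q_{ij}) = q_{ij}$.
 \item The mixed cases split further by the position of the remaining index. When that index lies outside the interval $[i,j]$, the relevant entries of $\widecheck{G}_q$ are already zero by triangularity and $M_{k\ell}=0$ automatically. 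The only non-automatic cases are when the free index lies strictly between $i$ and $j$: one computes $M_{j\ell} = g_{i\ell}-\epsilon g_{j\ell} = q_{i\ell}$ for $i<\ell<j$, and $M_{ki} = g_{kj} = q_{kj}$ for $i<k<j$.
\end{itemize}

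Collecting these observations, $M$ is upper triangular if and only if $q_{i\ell}=0$ and $q_{\ell j}=0$ for every $\ell$ with $i<\ell<j$, which is exactly the claimed condition. I do not expect any serious obstacle here; the whole argument is bookkeeping, and the only non-obvious point is keeping track of the subcases and using the hypothesis $|q_{ij}|\le 1$ precisely at the step that makes $M_{ji}$ vanish.
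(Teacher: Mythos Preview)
Your proof is correct and follows essentially the same approach as the paper: both compute $M=\FSq^{\tr}\widecheck{G}_q\FSq$ explicitly and read off exactly which below-diagonal entries can be nonzero. The only cosmetic difference is that the paper organizes the computation via a block decomposition of $\widecheck{G}_q$ (applying $T^{\epsilon}_{ij}$ first and then the swap $S_{ij}$), whereas you compute $M_{k\ell}=c_k^{\tr}\widecheck{G}_q c_\ell$ directly from the columns $c_k$ of $\FSq$; your preliminary reduction ``$M=\widecheck{G}_{q'}$ iff $M$ is upper triangular'' via the skew-symmetry of $M-\widecheck{G}_{q'}$ is a clean way to justify a step the paper leaves implicit.
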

\begin{proof}
Assume that $i<j$. Consider the matrix $\widecheck{G}_q$ partitioned in the following way, where $q_{i,\bullet}^{\tr}=(q_{i,i+1},\ldots\,$, $q_{i,j-1})$ and $q_{\bullet,j}^{\tr}=(q_{i+1,j},\ldots,q_{j-1,j})$,
\[
\widecheck{G}_q=\begin{pmatrix}
G_1 & y_1  & A & y_2 & B \\
0 & 1 & q_{i,\bullet}^{\tr} & q_{ij} & z^{\tr}_1 \\
0 & 0 & G_2 & q_{\bullet,j} & C \\
0 & 0 & 0 & 1 & z^{\tr}_2 \\
0 & 0 & 0 & 0 & G_3
\end{pmatrix},
\]
and where $G_1$, $G_2$ and $G_3$ are upper triangular (square) matrices with all diagonal entries equal to~$1$, with matrices $A, B, C$ and vectors $y_1,y_2,z_1,z_2$ of appropriate size. Observe that the composition $(T^{\epsilon}_{ij})^{\tr}\widecheck{G}_qT^{\epsilon}_{ij}$ has the following shape,
\[
(T^{\epsilon}_{ij})^{\tr}\widecheck{G}_qT^{\epsilon}_{ij}=
\begin{pmatrix}
G_1 & y_1-y_2  & A & y_2 & B \\
0 & 2-\epsilon q_{ij} & q_{i,\bullet}^{\tr} & q_{ij}-\epsilon & z^{\tr}_1-z^{\tr}_2 \\
0 & -q_{\bullet,j} & G_2 & q_{\bullet,j} & C \\
0 & -\epsilon & 0 & 1 & z^{\tr}_2 \\
0 & 0 & 0 & 0 & G_3
\end{pmatrix}.
\]
Swapping the $i$-th and $j$-th columns and rows of the matrix above, we get
\[
(\FSq^{\epsilon}_{ij})^{\tr}\widecheck{G}_q\FSq^{\epsilon}_{ij}=
\begin{pmatrix}
G_1 & y_2  & A & y_1-y_2 & B \\
0 & 1 & 0 & -\epsilon & z^{\tr}_2 \\
0 & q_{\bullet,j} & G_2 & -q_{\bullet,j} & C \\
0 & q_{ij}-\epsilon & q_{i,\bullet}^{\tr} & 2-\epsilon q_{ij} & z^{\tr}_1-z^{\tr}_2 \\
0 & 0 & 0 & 0 & G_3
\end{pmatrix}.
\]
Now, since $\epsilon=\textrm{sgn}(q_{ij})$ and $|q_{ij}|\leq 1$, then $q_{ij}-\epsilon=0$ (and $2-\epsilon q_{ij}=1$, as already argued in Remark~\ref{R:03}$(a)$). We conclude by observing that $\FSq^{\tr}\widecheck{G}_q\FSq$ is an upper triangular matrix if and only if $q_{i,\bullet}=0$ and $q_{\bullet,j}=0$. The case $j<i$ can be shown in a similar way.
\end{proof}

A $FS$-transformation $\FSq^{\epsilon}_{ij}$ for $q$ such that $q\FSq^{\epsilon}_{ij} \approx^{\FSq^{\epsilon}_{ij}} q$ will be called \textbf{$q$-admissible}.

\medskip
For an arrow $i \in Q_1$ in a loop-less quiver $Q$, consider the set $Q_1(i)$ of all arrows in $Q$ adjacent to arrow $i$, that is, $Q_1(i)=\{j \in Q_1 \mid \text{$j \neq i$ and $\MiNu(i) \cap \MiNu(j) \neq \emptyset$}\}$. We say that two arrows $i \neq j$ in $Q$ are \textbf{$Q$-admissible} (or that $\FS^{\epsilon}_{ij}$ is \textbf{$Q$-asmissible}) if we have
\[
k \notin Q_1(i) \cup Q_1(j), \qquad \text{for any arrow $k$ with $i<k<j$ or $j<k<i$}.
\]

\begin{corollary}\label{C:15}
Let $Q$ be a loop-less quiver with arrows $i \neq j$, and consider the flation $\FS=\FS^{\epsilon}_{ij}$ for $Q$, and the corresponding $FS$-transformation $\FSq=\FSq_{ij}^{\epsilon}$ for $q_Q$. Then $\FSq$ is $q_{Q}$-admissible if and only if $\FS$ is $Q$-admissible, and in that case
\[
q_{Q\FS} \approx^{\FSq} q_{Q}.
\]
\end{corollary}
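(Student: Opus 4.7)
The plan is to translate the $q_Q$-admissibility condition of Lemma~\ref{L:14} into the $Q$-admissibility condition on arrows by using the incidence bigraph description of $q_Q$, and then to read off the strong congruence from Proposition~\ref{P:12}.

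First I would note that Lemma~\ref{L:08}$(a)$ together with Definition~\ref{D:07} identifies the off-diagonal entries of $\widecheck{G}_{q_Q}$ with negative signed adjacencies of $\Inc(Q)$: for $k<\ell$, the coefficient $(q_Q)_{k\ell}$ is nonzero if and only if the arrows $k$ and $\ell$ are adjacent in $Q$, i.e.\ $\ell\in Q_1(k)$. Hence for any arrow $k$ with $i<k<j$ or $j<k<i$,
\[
(q_Q)_{ik}=0=(q_Q)_{kj} \;\Longleftrightarrow\; k\notin Q_1(i)\cup Q_1(j).
\]
Quantifying over all such $k$, this is exactly the definition of $\FS^{\epsilon}_{ij}$ being $Q$-admissible.

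Next, since by hypothesis $\FS^{\epsilon}_{ij}$ is a flation for the loop-less quiver $Q$ producing the loop-less quiver $Q\FS$, Remark~\ref{R:06}$(a)$ forces $i$ and $j$ to be non-parallel, so by Definition~\ref{D:07} we have $|(q_Q)_{ij}|\le 1$. Thus Lemma~\ref{L:14} applies and says that $\FSq^{\epsilon}_{ij}$ is $q_Q$-admissible exactly when the displayed vanishing condition holds for every such intermediate $k$. Combining this with the previous paragraph yields the equivalence
\[
\FSq^{\epsilon}_{ij}\text{ is }q_Q\text{-admissible}\;\Longleftrightarrow\;\FS^{\epsilon}_{ij}\text{ is }Q\text{-admissible}.
\]

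Finally, assuming admissibility, Proposition~\ref{P:12} applied to the $FS$-transformation gives $q_{Q\FS}=q_Q\FSq$, and the $q_Q$-admissibility of $\FSq$ means precisely that $\FSq^{\tr}\widecheck{G}_{q_Q}\FSq$ is upper triangular, whence
\[
\widecheck{G}_{q_{Q\FS}}=\widecheck{G}_{q_Q\FSq}=\FSq^{\tr}\widecheck{G}_{q_Q}\FSq,
\]
so $q_{Q\FS}\approx^{\FSq}q_Q$, as required. There is no real obstacle here: the entire argument is a bookkeeping exercise combining Lemma~\ref{L:08}, Lemma~\ref{L:14} and Proposition~\ref{P:12}; the only point that deserves a moment of attention is checking the hypothesis $|(q_Q)_{ij}|\le 1$ of Lemma~\ref{L:14}, which is handled by Remark~\ref{R:06}$(a)$ applied to the assumed loop-less output $Q\FS$.
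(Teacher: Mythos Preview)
Your argument is correct and follows essentially the same route as the paper's proof: both use Lemma~\ref{L:14} to characterize $q_Q$-admissibility via the vanishing of the intermediate coefficients $(q_Q)_{ik}$ and $(q_Q)_{kj}$, and then invoke Lemma~\ref{L:08} together with Definition~\ref{D:07} to translate this into the combinatorial condition $k\notin Q_1(i)\cup Q_1(j)$. Your write-up is slightly more explicit than the paper's in two places (checking the hypothesis $|(q_Q)_{ij}|\le 1$ of Lemma~\ref{L:14}, and spelling out the final congruence via Proposition~\ref{P:12}); one small citation fix is that Remark~\ref{R:06}$(b)$, rather than $(a)$, is the part that concerns $\FS^{\epsilon}_{ij}$.
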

\begin{proof}
Take $q_Q(x)=\sum \limits_{1 \leq i \leq j \leq n}q_{ij}x_ix_j$. By Lemma~\ref{L:14}, the transformation $\FSq$ is $q_Q$ admissible if and only if $q_{ik}=0=q_{kj}$ for all $k$ such that $i<k<j$ or $j<k<i$. Correspondingly, using Lemma~\ref{L:08} and the definition of incidence graph $\Inc(Q)$, this means that
\[
k \notin Q_1(i) \cup Q_1(j), \qquad \text{for any arrow $k$ with $i<k<j$ or $j<k<i$},
\]
that is, $\FSq$ is $q_Q$ admissible if and only if $\FS$ is $Q$-admissible.
\end{proof}

\subsection{Iterated transformations} \label{S(C):ite}

Recall that by iterated $FS$-(linear) transformation $\FSq$ for $q$ we mean a composition of the form $\FSq=\FSq^{\epsilon_1}_{i_1j_1}\cdots \FSq^{\epsilon_r}_{i_rj_r}$ such that $\FSq^{\epsilon_t}_{i_tj_t}$ is a $FS$-transformation for $q^{t-1}$, where $q^0=q$ and we take recursively $q^{t}=q^{t-1}\FSq_{i_tj_t}$, and such that each $q^t$ is a unit form for $t=1,\ldots,r$. If each $\FSq_{i_tj_t}^{\epsilon_t}$ is $q^{t-1}$-admissible, we say that $\FSq$ is a \textbf{$q$-admissible iterated $FS$-(linear) transformation}. In this case we have $q^r \approx^{\FSq} q$.

\medskip
An \textbf{iterated $FS$-transformation} $\FS$ of a loop-less quiver $Q$ is a concatenation $\FS=\FS_{i_1j_1}^{\epsilon_1}\cdots \FS^{\epsilon_r}_{i_rj_r}$ of $FS$-transformations such that if we take inductively $Q^{0}=Q$ and $Q^{t}=Q^{t-1}\FS^{\epsilon_t}_{i_tj_t}$, then the pair of arrows $i_t$ and $j_t$ are not parallel in $Q^{t-1}$ for $t=1,\ldots,r$. The expression $Q\FS$ denotes the final (loop-less) quiver $Q^r$. If in each step $t$, the $FS$-transformation $\FS^{\epsilon_t}_{i_tj_t}$ is $Q^{t-1}$-admissible, then the iterated $FS$-transformation $\FS$ is called \textbf{$Q$-admissible}. By definition and Corollary~\ref{C:15}, $\FS$ is $Q$-admissible if and only if $\FSq$ is $q_Q$ admissible, and in that case
\[
q_{Q\FS} \approx^{\FSq}q_Q.
\]
Notice that if $\FS$ is a $Q$-admissible iterated $FS$-transformation, and $\FS'$ is a $Q\FS$-admissible iterated $FS$-transformation, then the concatenation $\FS\FS'$ is a $Q$-admissible iterated $FS$-transformation.

\medskip
Recall that a \textbf{maximal (quiver) star} $\Star_n$ with $n$ arrows is a tree quiver with $n+1$ vertices (and arbitrary direction of its arrows), such that there exists a vertex $v_0$ incident to all arrows of the quiver (called \textbf{center of the star}). One of our main combinatorial problems is to show that any tree quiver can be brought to a maximal star via an admissible iterated $FS$-transformation. We need the following key preliminary observation.

\begin{lemma}\label{L:16}
For any maximal (quiver) star $\Star$ and any vertex $v \in \Star_0$, there exists a $\Star$-admissible iterated $FS$-transformation $\FS$ such that $\Star\FS$ is a maximal star having $v$ as its center.
\end{lemma}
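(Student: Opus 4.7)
The trivial case $v=c$ is handled by the empty iterated $FS$-transformation, so I assume $v\neq c$. Since $\Star$ is a maximal star with center $c$, the vertex $v$ is a leaf joined to $c$ by a unique arrow $i_v$; let $p$ denote its position in the ordering of $\Star_1$, and for each $k\neq p$ denote by $a_k$ the arrow at position $k$, joining $c$ to some leaf $w_k$. In \emph{Phase~1}, I apply the $FS$-transformations $\FS^{\epsilon_k}_{a_k,i_v}$ successively for $k=p-1,p-2,\ldots,1$; at each application $a_k$ and $i_v$ occupy the consecutive positions $k$ and $k+1$, so admissibility is automatic (no arrow lies strictly between them). Each step uses the shared vertex $c$ to replace $a_k$ by a new arrow joining $w_k$ and $v$, and swaps the two positions, so that after Phase~1 the arrow $i_v$ sits at position~$1$, the transformed arrows $(v,w_1),(v,w_2),\ldots,(v,w_{p-1})$ fill positions $2,\ldots,p$, and the untouched arrows $a_{p+1},\ldots,a_n$ remain at positions $p+1,\ldots,n$.

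In \emph{Phase~2} I process the remaining untouched arrows $a_\ell$ for $\ell=p+1,p+2,\ldots,n$ in turn. For each such $\ell$, I first slide $a_\ell$ leftward through the block of transformed arrows separating it from $i_v$ by a sequence of consecutive $\FS^{0}$ swaps, and then, once $a_\ell$ sits immediately next to $i_v$, I apply $\FS^{\epsilon_\ell}_{a_\ell,i_v}$; the latter is admissible because $a_\ell$ and $i_v$ are now consecutive and share the vertex $c$, and its flation converts $a_\ell$ into a new arrow $(v,w_\ell)$. After all $n-p$ iterations of Phase~2 are complete, every original $a_k$ with $k\neq p$ has been replaced by $(v,w_k)$, while $i_v$ still joins $c$ and $v$; all arrows of $\Star\FS$ are thus incident to $v$, so $\Star\FS$ is a maximal star with center $v$, as required.

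The main obstacle is verifying admissibility for the $\FS^{0}$ slides in Phase~2, since $i_v$ becomes separated from each remaining $a_\ell$ by a growing block of previously transformed arrows that all share the vertex $v$ with $i_v$ (and would therefore block any attempt to pair $a_\ell$ directly with $i_v$). The crucial combinatorial fact that rescues the construction is that an untouched arrow $a_\ell$ has vertex set $\{c,w_\ell\}$ while a transformed arrow $(v,w_j)$ with $j\neq\ell$ has vertex set $\{v,w_j\}$, and these two sets are disjoint because $c$, $v$, $w_\ell$, $w_j$ are pairwise distinct vertices of $\Star$. Consequently any such pair is non-adjacent, so each consecutive $\FS^{0}$ swap in step~(i) of Phase~2 acts on a non-adjacent pair with no intervening arrow, and admissibility in the sense of Corollary~\ref{C:15} is automatic. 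This disjointness is precisely what lets each remaining $a_\ell$ be brought next to $i_v$ one at a time, and completes the iterated $\Star$-admissible transformation.
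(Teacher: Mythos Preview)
Your proof is correct, but it takes a genuinely different route from the paper's. The paper exhibits a single iterated $FS$-transformation $\mathcal{W}=\FS_{2,1}\FS_{3,2}\cdots\FS_{n,n-1}$ (consecutive pairs only) and checks that $\Star\mathcal{W}$ is again a maximal star whose center has moved from $v_0$ to the adjacent leaf $v_1$, with the new minimal arrow joining $v_1$ and $v_2$; iterating $\mathcal{W}$ therefore cycles the center through $v_0,v_1,v_2,\ldots$ and reaches any prescribed vertex after finitely many rounds. Every step in $\mathcal{W}$ involves two arrows sharing the current ``bridge'' vertex, so no $\FS^0$ swaps are ever needed and admissibility is automatic from consecutiveness alone.

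Your construction, by contrast, targets $v$ directly: Phase~1 drags $i_v$ to the leftmost position while converting the arrows it passes, and Phase~2 processes the remaining arrows one at a time using $\FS^0$ slides through the already-converted block. The price is the extra verification that an untouched arrow $(c,w_\ell)$ and a converted arrow $(v,w_j)$ are non-adjacent, which you supply correctly via the pairwise distinctness of $c,v,w_\ell,w_j$. Your approach is more ad hoc but arguably more transparent about why the particular center $v$ is reached; the paper's rotation trick is slicker and avoids the non-adjacency bookkeeping, at the cost of possibly many more steps (it applies $\mathcal{W}$ up to $n$ times, each of length $n-1$). Both arguments generalize to the $1$-star case (Lemma~\ref{L:29}), though the paper's rotation requires a careful case split there to avoid the parallel pair.
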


\begin{proof}
Let $1,\ldots,{n}$ be the arrows of the maximal star $\Star$, all of them having in common the center of the star $v_0$, and assume $n \geq 2$. The rest of vertices $v_1,\ldots,v_n$ of $\Star$ are enumerated such that $v_t$ is incident to the arrow $t$ for $t=1,\ldots,n$. We show that the following is a $\Star$-admissible iterated $FS$-transformation,
\[
\mathcal{W}=\FS_{2,1}\FS_{3,2}\cdots \FS_{n,n-1}.
\]
For $t=1,\ldots,n$, take $Q^{t}$ to be the tree quiver with same set of vertices than $\Star$, given by
\[
\xymatrix@R=1pc{
& v_2 \ar@{-}[rd]^-{1} & & & v_{t+1} \ar@{-}[ld]_-{t+1} \\
Q^{t}= & \vdots & v_1 \ar@{-}[r]_-{t} & v_0 & \vdots \\
& v_{t} \ar@{-}[ru]_-{t-1} & & & v_n \ar@{-}[lu]^-{n} }
\]
(the direction of arrows, not shown in the diagram, is irrelevant for the proof). Then $Q^{1}=\Star$, and clearly $\FS_{t+1,t}$ is a $Q^{t}$-admissible $FS$-transformation (admissibility holds since $t+1$ and $t$ are consecutive arrows). Observe that $Q^{t+1}=Q^{t}\FS_{t+1,t}$, and thus by definition $Q^{n}=\Star \mathcal{W}$. Notice also that $Q^{n}$ is a maximal star with center the vertex $v_1$, that $\mathcal{W}$ is a $Q^{n}$-admissible iterated $FS$-transformation, and that the first arrow $1$ in $Q^{n}$ joins the vertices $v_1$ and $v_2$. In particular, the quiver $(\Star\mathcal{W})\mathcal{W}$ is a maximal star with center $v_2$, whose first arrow joins vertices $v_2$ and $v_3$.

Now, if $v=v_0$ is the original center of the star, there is nothing to do. If $v=v_t$ for some $1 \leq t \leq n$, then we may repeat the above construction $t$ times to get a maximal star $\Star \mathcal{W}^{t}$ with center the vertex $v_t$, as wanted, where $\mathcal{W}^t$ denotes the concatenation of $t$ copies of $\mathcal{W}$.
\end{proof}

\begin{proposition}\label{P:17}
For any tree quiver $Q$ with selected vertex $v$, there exists a $Q$-admissible iterated $FS$-transformation $\FS$ such that $Q\FS$ is a maximal star with center the vertex $v$.
\end{proposition}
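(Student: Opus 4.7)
The plan is to proceed by induction on $n=|Q_1|$, using Lemma~\ref{L:16} both as the base case and as the final center-moving step. For $n\le 1$ the tree $Q$ is already a maximal star and Lemma~\ref{L:16} provides the required admissible transformation directly.

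For the inductive step with $n\ge 2$, I would pick a leaf $w\ne v$ of $Q$ (possible since a tree on at least two vertices has at least two leaves, so there is always one distinct from $v$), let $j$ be its pendant arrow, and let $v'\in Q_0$ be the other endpoint of $j$. The subquiver $Q'=Q^{(w)}$ is a tree on $n-1$ arrows containing $v'$, so by the induction hypothesis applied to $Q'$ with selected vertex $v'$ there is a $Q'$-admissible iterated $FS$-transformation $\FS_1$ such that $Q'\FS_1$ is a maximal star with center $v'$. Because $\FS_1$ acts only on arrows of $Q'_1$, the arrow $j$ is untouched throughout, so in $Q\FS_1$ it still joins $v'$ to $w$; this makes $Q\FS_1$ itself a maximal star on $n$ arrows, with center $v'$. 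A final application of Lemma~\ref{L:16} to $Q\FS_1$ and the vertex $v$ produces a $(Q\FS_1)$-admissible iterated $FS$-transformation $\FS_2$ with $(Q\FS_1)\FS_2$ a maximal star centered at $v$, and $\FS=\FS_1\FS_2$ is the required $Q$-admissible iterated $FS$-transformation (the concatenation of admissible transformations being admissible, as noted in Subsection~\ref{S(C):ite}).

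The main obstacle lies in verifying that $\FS_1$, admissible in $Q'$, remains $Q$-admissible: a step $\FS^{\epsilon}_{ik}$ of $\FS_1$ can fail admissibility in $Q$ but not in $Q'$ only when the label of $j$ is strictly between $i$ and $k$ in the ordering of $Q_1$ and $j$ is adjacent in $Q$ to $i$ or $k$ (equivalently, $i$ or $k$ is incident to $v'$, since the only vertex other than $v'$ in $\MiNu(j)$ is the leaf $w$). I would handle this by exploiting the freedom to choose $w$ and, if necessary, prepending to $\FS_1$ a short sequence of admissible pure-swap $FS$-transformations (of the form $\FS^{0}_{ab}$ for non-adjacent $a,b$, which act only by relabeling) so as to place $j$ at an extremal position of the label ordering before applying $\FS_1$. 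Once $j$ is arranged so that no label of an arrow of $Q'$ used by $\FS_1$ lies strictly between $j$ and another such label, the $Q'$-admissibility of $\FS_1$ transfers directly to $Q$. This label-management step, together with the careful extremal choice of $w$, is the principal combinatorial subtlety beyond the inductive skeleton and Lemma~\ref{L:16}.
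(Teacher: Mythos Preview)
Your inductive skeleton is sound and, in spirit, parallel to the paper's argument: induct on the number of arrows, peel off one arrow, and finish with Lemma~\ref{L:16}. You also correctly identify the crux of the matter, namely that a $Q'$-admissible step $\FS^{\epsilon}_{ik}$ can fail $Q$-admissibility precisely when the label of $j$ sits strictly between $i$ and $k$ and $j$ is adjacent to one of them.

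The gap is in your proposed remedy. A transformation of the form $\FS^{0}_{ab}$ is only defined (as an $FS$-transformation) when the arrows $a$ and $b$ are \emph{non-adjacent} in the current quiver; otherwise $\epsilon=\sgn(q_{ab})\neq 0$ and you are forced into a genuine flation, not a pure relabeling. Moving $j$ to an extremal position by consecutive pure swaps therefore requires that each arrow whose label lies between $\ell_j$ and the target extreme be non-incident to $v'$, which need not hold (for instance, on the path $v_1\!-\!v_2\!-\!v_3\!-\!v_4$ with arrow labels $2,1,4,3$ no pendant arrow has an extremal label, and the obvious consecutive swap toward the nearer extreme is blocked by adjacency). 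You gesture at ``exploiting the freedom to choose $w$'' and at admissible non-consecutive swaps, but give no argument that some choice always succeeds; without this the induction does not close.

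The paper sidesteps the issue by deleting the \emph{maximal} arrow $n$ rather than a pendant one. Since every other label is $<n$, the label $n$ can never lie strictly between two labels used by the induction, and the two components $Q^v$, $Q^w$ of $Q\setminus\{n\}$ share no vertex, so arrows in one are never adjacent to arrows in the other; hence admissibility transfers for free. The only extra work is when $n$ is not pendant: then one first makes $Q^v$ a star centered at $v$ (so $n-1$ becomes pendant and adjacent to $n$), applies the single admissible step $\FS_{n-1,n}$ to make $n$ pendant, and reduces to the pendant case. If you want to salvage your leaf-based approach, the clean fix is to replace the ``pure swaps'' by the sequence of consecutive $FS$-transformations $\FS^{\epsilon}_{\ell,\ell+1},\FS^{\epsilon}_{\ell+1,\ell+2},\ldots$ (each automatically admissible), which carries a pendant arrow with leaf $w$ to the top position while keeping it pendant; but note that this alters the quiver along the way and amounts to the same mechanism the paper uses.
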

\begin{proof}
We proceed by induction on the number of arrows $n=|Q_1|$ of the tree $Q$. For $n=1$ there is nothing to show. For $n=2$ the tree $Q$ is a star, and we may change the position of its center as in the Lemma above. Hence, we may assume that $n \geq 3$ and that the claim holds for all trees with less than $n$ arrows.

Let $n$ be the maximal arrow in $Q$ (relative to the total order $\leq$ in $Q_1$) and take $\MiNu(n)=\{v,w\}$. Let $Q'$ be the quiver obtained from $Q$ by deleting the arrow $n$. Then $Q'$ is the disjoint union of exactly two tree quivers, one containing vertex $v$ and denoted by $Q^v$, and one containing vertex $w$ and denoted by $Q^w$ . The sets $Q^v_1$ and $Q^w_1$ inherit the total order from $Q_1$.

Observe that, by the maximality of $n$, any $Q^v$-admissible iterated $FS$-transformation is also $Q$-admissible, and similarly for $Q^w$. We distinguish two cases:

\medskip
\noindent \textbf{Case 1.} Assume first that $|Q^w_0|=1$ (that is, that $w$ is a leaf in $Q$). By induction hypothesis, we may assume that $Q^v$ is a maximal star with center $v$. Then $Q$ is a maximal star. We proceed analogously if $|Q^v_0|=1$.

\medskip
\noindent \textbf{Case 2.} Assume that $|Q^w_0|>1$ and $|Q^v_0|>1$, and that the second largest arrow $n-1$ in $Q$ belongs to $Q^v$. By induction hypothesis, we may assume that $Q^v$ is a maximal star with center $v$. In particular, $n-1$ and $n$ are adjacent arrows and $n-1$ is a pendant arrow in $Q$. Then $\FS_{n-1,n}$ is a $Q$-admissible $FS$-transformation, and the maximal arrow $n$ in $Q'=Q\FS_{n-1,n}$ is a pendant arrow in $Q'$. Apply then Case~1 to the tree quiver $Q'$.

\medskip
To complete the proof, use the Lemma above to change the center of the resulting maximal star, as desired.
\end{proof}

\begin{remark}\label{R:18}
For a loop-less quiver $Q$, all $Q$-admissible iterated $FS$-transformations are reversible. To be precise, if $\FS=\FS^{\epsilon_1}_{i_1j_1}\cdots\FS^{\epsilon_r}_{i_rj_r}$ is a $Q$-admissible iterated $FS$-transformation and $Q'=Q\FS$, then
\[
\FS^{-1}:=\FS^{-\epsilon_r}_{j_ri_r}\cdots \FS^{-\epsilon_1}_{j_1i_1},
\]
is a $Q'$-admissible $FS$-transformation, and $Q=Q'\FS^{-1}$.
\end{remark}
\begin{proof}
The claim follows inductively from Remark~\ref{R:06}$(b)$.
\end{proof}

\subsection{Strong congruence among positive unit forms of Dynkin type $\A_n$} \label{S(C):proof}

The following proposition is one of the goals in~\cite{jaJ2018} (see~\cite[Theorem~5.5$(c)$ and Corollary~6.6]{jaJ2018}). Here we give a short proof. For $c \geq 0$, define the \textbf{canonical $c$-extension linear quiver} $\overrightarrow{\A}_n^{(c)}$ as a quiver obtained from the linear quiver $\overrightarrow{\A}_n=\overrightarrow{\A}_n^{(0)}=\xymatrix{v_1 \ar[r]^-{1} & v_2 \ldots v_n \ar[r]^-{n} & v_{n+1}}$ by adding $c$ arrows from $v_{n+1}$ to $v_1$:
\[
\overrightarrow{\A}_n^{(c)}=\xymatrix{v_1 \ar[r]^-{1} & v_2 \ar[r]^-{2} & v_3 \ldots v_{n-1} \ar[r]^-{n-1} & v_n \ar[r]^-{n} & v_{n+1} \ar@<2.5ex>@/^20pt/[llll]^-{n+c} \ar@<.5ex>@/^20pt/[llll]_-{n+1}^-{\cdots} \\ {} }
\]

Observe that the incidence graph of $\overrightarrow{\A}_n^{(c)}$ is the canonical $c$-vertex extension $\widehat{\A}_n^{(c)}$ of Dynkin type $\A_{n}$, as defined by Simson in~\cite[Definition~2.2]{dS16a},
\[
\Inc(\overrightarrow{\A}_n^{(c)})=\widehat{\A}_n^{(c)}.
\]

\begin{proposition}\label{P:19}
Let $q$ be a connected non-negative unit form of Dynkin type $\A_n$. Then there is a connected loop-less quiver $Q$ such that $q=q_Q$.
\end{proposition}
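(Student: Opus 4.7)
\medskip
\noindent\textbf{Proof plan.} The strategy is to run Theorem~\ref{T:04} backwards at the quiver level, starting from the obvious quiver realization of the canonical extension and undoing the flations one at a time.

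Let $c=\CRnk(q)$. Apply Theorem~\ref{T:04} to obtain an iterated flation $B=T_{i_1j_1}^{\epsilon_1}\cdots T_{i_rj_r}^{\epsilon_r}$ such that $qB$ is the canonical $c$-extension of $q_{\A_n}$. By the observation preceding the proposition, $\Inc(\overrightarrow{\A}_n^{(c)})=\widehat{\A}_n^{(c)}$, so by Lemma~\ref{L:08}$(b)$ this canonical extension equals $q_{\overrightarrow{\A}_n^{(c)}}$. Set $q^0=q$ and $q^t=q^{t-1}T_{i_tj_t}^{\epsilon_t}$, so that $q^r=q_{\overrightarrow{\A}_n^{(c)}}$.

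The main step is to construct loop-less quivers $Q_r,Q_{r-1},\ldots,Q_0$ with $Q_r=\overrightarrow{\A}_n^{(c)}$ and $q_{Q_t}=q^t$ for each $t$, by reversing the flations inductively. Given $Q_t$ with $q_{Q_t}=q^t$, I first note that a short direct computation (expanding $q^t(\bas_{i_t}+\bas_{j_t})$) yields $q^t_{i_tj_t}=-q^{t-1}_{i_tj_t}$ whenever $\epsilon_t\neq 0$; combined with Remark~\ref{R:03}$(a)$ this gives $|q^t_{i_tj_t}|\leq 1$ and $\sgn(q^t_{i_tj_t})=-\epsilon_t$. Via Lemma~\ref{L:08}$(a)$, the bound $|q^t_{i_tj_t}|\leq 1$ forces the arrows $i_t$ and $j_t$ to be non-parallel in $Q_t$, since by Definition~\ref{D:07}$(b)$ a pair of parallel arrows would contribute an entry of absolute value $2$. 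Hence Lemma~\ref{L:11} says $\mathcal{T}_{i_tj_t}^{-\epsilon_t}$ is a quiver flation for $Q_t$, Remark~\ref{R:06}$(a)$ guarantees that $Q_{t-1}:=Q_t\mathcal{T}_{i_tj_t}^{-\epsilon_t}$ is loop-less, and Proposition~\ref{P:12} combined with Remark~\ref{R:03}$(a)$ yields $q_{Q_{t-1}}=q_{Q_t}T_{i_tj_t}^{-\epsilon_t}=q^tT_{i_tj_t}^{-\epsilon_t}=q^{t-1}$. When $\epsilon_t=0$, simply set $Q_{t-1}=Q_t$.

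Setting $Q:=Q_0$, one obtains a loop-less quiver with $q_Q=q^0=q$, and connectedness of $Q$ follows at once from the assumed connectedness of $q$ via Lemma~\ref{L:08}$(d)$. The only non-elementary ingredient is the Simson--Zaj\k{a}c classification (Theorem~\ref{T:04}), which supplies the iterated flation; the potential technical obstacle is ensuring that each reverse flation stays inside the category of loop-less quivers, but this is handled cleanly by the observation above that unit-form entries bounded by $1$ forbid parallel arrows in the underlying quiver.
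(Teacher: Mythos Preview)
Your proof is correct and follows essentially the same strategy as the paper: invoke Theorem~\ref{T:04} to reach the canonical extension $q_{\widehat{\A}_n^{(c)}}=q_{\overrightarrow{\A}_n^{(c)}}$, then pull back along the inverse of the iterated flation. The paper compresses your inductive reversal into the single matrix definition $I(Q):=I(\overrightarrow{\A}_n^{(c)})T^{-1}$ and verifies $q_Q=q$ directly from the Gram matrix, leaving the fact that this product is genuinely the incidence matrix of a loop-less quiver implicit; your step-by-step argument via Lemma~\ref{L:11}, Remark~\ref{R:06}$(a)$ and Proposition~\ref{P:12} makes precisely that point explicit.
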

\begin{proof}
By Theorem~\ref{T:04}, there is an iterated flation $T=T^{\epsilon_1}_{i_1j_1}\cdots T^{\epsilon_r}_{i_rj_r}$ for $q$ such that $qT=q_{\widehat{\A}_n^{(c)}}$ is the canonical $c$-extension of  $q_{\A_n}$ (for $c \geq 0$ the corank of $q$, see~\cite{dS16a,SZ17}).

\medskip
Take $Q$ as the unique quiver satisfying
\[
I(Q)=I(\overrightarrow{\A}_n^{(c)})T^{-1},
\]
and notice that for any $x \in \Z^n$ we have
\begin{eqnarray*}
 q_Q(x)\hspace{-2mm}&=&\hspace{-2mm}\frac{1}{2} x^{\tr}I(Q)^{\tr}I(Q)x =\frac{1}{2}
                                  x^{\tr}T^{-\tr}I(\overrightarrow{\A}_n^{(c)})^{\tr}I(\overrightarrow{\A}_n^{(c)})T^{-1}x=\\
  &=\hspace{-2mm}& q_{\widehat{\A}_n^{(c)}}(T^{-1}x)=q(x),
\end{eqnarray*}
that is, $q=q_Q$.
\end{proof}

Now we are ready to prove one of the main Gram classification results of the paper.

\begin{theorem}\label{T:20}
Let $q$ be a connected positive unit form of Dynkin type $\A_n$. If $q'$ is a unit form with $q' \sim q$, then there exists a composition of $q$-admissible iterated $FS$-linear transformations and sign changes $B$, such that
\[
q' \approx^B q.
\]
\end{theorem}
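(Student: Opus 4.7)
The plan is to route both $q$ and $q'$ through the incidence-quiver formalism of Section~\ref{S(C)} and reduce each to a common maximal-star normal form. By Proposition~\ref{P:19} I realize $q=q_Q$ and $q'=q_{Q'}$ for connected loop-less quivers. Positivity of $q$ together with Lemma~\ref{L:01}(c) and Corollary~\ref{C:10} forces $|Q_0|=|Q_1|+1$, so $Q$ is a tree on $n+1$ vertices; Lemma~\ref{L:01}(d) yields the same for $Q'$. Proposition~\ref{P:17} then supplies a $Q$-admissible iterated $FS$-transformation $\FS_1$ with $Q\FS_1$ a maximal star, and analogously a $Q'$-admissible $\FS_2$ with $Q'\FS_2$ a maximal star. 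By Corollary~\ref{C:15} the associated iterated linear $FS$-transformations satisfy $q\FSq_1\approx^{\FSq_1}q$ and $q'\FSq_2\approx^{\FSq_2}q'$.

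Next, I will choose arrow subsets $C_1$ and $C_2$ so that reversing them produces maximal stars $(Q\FS_1)\mathcal{V}_{C_1}$ and $(Q'\FS_2)\mathcal{V}_{C_2}$ in which every arrow points away from the center; by Remark~\ref{R:13} these arrow inversions correspond to sign changes $V_{C_1}$ and $V_{C_2}$ on the associated unit forms. The key combinatorial observation, immediate from Lemma~\ref{L:08}(a), is that the triangular Gram matrix of any maximal star on the arrow set $\{1,\ldots,n\}$ with all arrows outgoing is the upper-triangular all-ones matrix, \emph{independently} of the center, of the leaf labeling, and of the total order placed on the arrow set. Consequently the two normalizations coincide:
\[
q\FSq_1 V_{C_1}\;=\;q_{(Q\FS_1)\mathcal{V}_{C_1}}\;=\;q_{(Q'\FS_2)\mathcal{V}_{C_2}}\;=\;q'\FSq_2 V_{C_2}.
\]

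Setting $B=\FSq_1\,V_{C_1}\,V_{C_2}\,\FSq_2^{-1}$ (and using $V_{C_2}^{-1}=V_{C_2}$) and chaining the three strong congruences produced above gives $\widecheck{G}_{q'}=B^\tr\widecheck{G}_qB$, i.e.\ $q'\approx^B q$. The step I expect to be the main obstacle is the final verification that $B$ is a genuine composition of $q$-admissible iterated $FS$-linear transformations and sign changes in the sense of Section~\ref{S(C):ite}. Reading the factors of $B$ left to right starting from $q$, the leading $\FSq_1$ is $q$-admissible by construction and the sign changes $V_{C_1}, V_{C_2}$ are unconditionally legal; after the prefix $\FSq_1 V_{C_1} V_{C_2}$ the current unit form equals $q'\FSq_2$ by the displayed equality above, so the admissibility of the trailing factor $\FSq_2^{-1}$ reduces to its admissibility at $q'\FSq_2$. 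This in turn follows by combining the reversibility statement of Remark~\ref{R:18} applied to the quiver $Q'$ and transformation $\FS_2$ with Corollary~\ref{C:15}, completing the plan.
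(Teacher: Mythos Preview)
Your argument is correct and follows essentially the same route as the paper's proof: realize both forms as incidence forms of tree quivers via Proposition~\ref{P:19}, reduce each tree to a maximal star via Proposition~\ref{P:17}, orient all arrows outward via an arrow inversion, and compose the resulting transformations (and the inverse of the second) to obtain $B$. You are in fact slightly more explicit than the paper on three points the paper leaves implicit---the tree-ness of $Q$ via Corollary~\ref{C:10}, the fact that any outward-oriented maximal star has the all-ones upper-triangular Gram matrix, and the admissibility of the reversed tail $\FSq_2^{-1}$ via Remark~\ref{R:18}---but the overall strategy is identical.
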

\begin{proof}
Let $Q$ be a tree quiver such that $q=q_Q$ as in Proposition~\ref{P:19}. By Proposition~\ref{P:17}, there is a $Q$-admissible iterated $FS$-transformation $\FS$ such that $Q\FS$ is a maximal star. Take an arrow inversion $\mathcal{V}$ such that $\Star=Q\FS \mathcal{V}$ has all arrows pointing away from the star center. Denote by $\FSq$ the $FS$-linear transformation for $q$ corresponding to $\FS$, and by $V$ the point inversion corresponding to $\mathcal{V}$, so that $qTV=q_{\Star}$, by Proposition~\ref{P:12}.

\medskip
Now, if $q' \sim q$ then $q'$ is a connected positive unit form (by Lemma~\ref{L:01}). As before, there is a composition of $q'$-admissible iterated $FS$-linear transformations $T'$, and a point inversion $V'$, such that $q'T'V'=q_{\Star}=qTV$.  Considering the linear transformation $B=TV(V')^{-1}(T')^{-1}$, by Corollary~\ref{C:15} and Remark~\ref{R:13} we have $q' \approx^B q$, which completes the proof.
\end{proof}

\section{Combinatorial Coxeter analysis of unit forms of Dynkin type $\A_n$} \label{S(A)}

We begin this section giving some combinatorial definitions, in particular the notion of \emph{inverse of a quiver}, which will be used for the Coxeter analysis of non-negative unit forms of Dynkin type $\A_n$. Let $Q=(Q_0,Q_1,\sou,\tar)$ be a loop-less quiver, with a fixed total ordering $\leq$ of its arrows, and consider the sets
 \[
 Q_1^<(v,i)=\{j \in Q_1 \mid \text{$j<i$ and $v \in \MiNu(i) \cap \MiNu(j)$}\},
 \]
 for each vertex $v$ and each arrow $i$ of $Q$.

\begin{definition}\label{D:22}
Let $\alpha=i_0^{\epsilon_0}i_1^{\epsilon_1}\cdots i_{\ell}^{\epsilon_{\ell}}$ be a non-trivial walk of $Q$.
\begin{itemize}
 \item[a)] We say that the walk $\alpha$  is \textbf{minimally decreasing} if
\[
i_{t+1}=\max Q_1^{<}(v_t,i_t), \quad  \text{for $t=0,\ldots,\ell-1$}.\vspace*{-1.8mm}
\]
 \item[b)] If $\alpha$ is minimally decreasing, we say that $\alpha$ is \textbf{left complete} if whenever $\beta \alpha$ is minimally decreasing for some walk $\beta$, then $\beta$ is a trivial walk. Similarly,  $\alpha$ is \textbf{right complete} if whenever $\alpha \beta$ is minimally decreasing for some walk $\beta$, then $\beta$ is a trivial walk. A left and right complete minimally decreasing walk will be called \textbf{structural (decreasing) walk}.
\end{itemize}
\end{definition}

We will mainly consider the following particular minimally decreasing walks. For an arrow $i$ with there are exactly two right complete minimally descending walks starting with arrow $i$, one starting at vertex $\sou(t)$ and denoted by $\alpha^-_Q(i^{+1})$, and one starting at vertex $\tar(i)$ and denoted by $\alpha^-_Q(i^{-1})$. To be precise, if
\[
 \alpha_Q^-(i^{+1})=(v_{-1},i_0,v_{0},i_1,v_1,\ldots,v_{\ell-1},i_{\ell},v_{\ell}),
\]
then $v_{-1}=\sou(i)$, $i_0=i$, $i_{t+1}=\max Q_1^{<}(v_t,i_t)$ for $t=0,\ldots,\ell-1$,  and $Q_1^<(v_{\ell},i_{\ell})=\emptyset$.

\medskip
 Similarly, if
\[
 \alpha_Q^-(i^{-1})=(v_{-1},i_0,v_{0},i_1,v_1,\ldots,v_{\ell-1},i_{\ell},v_{\ell}),
\]
then $v_{-1}=\tar(i)$, $i_0=i$, $i_{t+1}=\max Q_1^{<}(v_t,i_t)$ for $t=0,\ldots,\ell-1$,  and $Q_1^<(v_{\ell},i_{\ell})=\emptyset$. For a vertex $v$ we denote by $\alpha_Q^-(v)$ the unique structural decreasing walk starting at $v$.

\medskip
Consider dually the minimally increasing walks $\alpha_Q^+(i^{\pm 1})$ and $\alpha_Q^+(v)$.

\begin{definition}\label{D:inv} Define a new quiver $Q^*=(Q^*_0,Q^*_1,\sou^*,\tar^*)$ having the same set of vertices $Q_0^*=Q_0$ than $Q$, and the same number of arrows $|Q_1^*|=|Q_1|$, and such that each arrow $i$ in $Q$ corresponds to an arrow $i^*$ in $Q^*$, given by
 \[
 \sou^*(i^*)=\tar(\alpha_Q^-(i^{-1})), \qquad \text{and} \qquad \tar^*(i^*)=\tar(\alpha_Q^-(i^{+1})).
 \]
The order of the set of arrows of $Q_1^*$ corresponds to the order in $Q_1$ (that is, $i^* \leq j^*$ in $Q^*_1$ if and only if $i \leq j$ in $Q_1$).
\end{definition}

The quiver $Q^*$ defined above will be referred to as \textbf{inverse quiver} of $Q$, and we will use the notation $Q^{-1}=Q^*$ (we will drop the asterisk $*$ on arrows of $Q^{-1}$ when the context allows it). Proposition~\ref{P:24} below, for which we need the following technical result, justifies our definitions.

\subsection{A technical lemma} \label{S(A):Inv}

Recall that the columns of the (vertex-arrow) incidence matrix $I(Q)$ of $Q$ are denoted by $I_i=\sou(i)-\tar(i) \in \Z^{|Q_0|}$ for an arrow $i$ of $Q$. The columns of $I(Q^{-1})$ will be denoted by $I_i^{-1}$ for an arrow $i$ of the inverse quiver $Q^{-1}$. For convenience, we consider the function $\langle-,- \rangle:Q_1 \times Q_1 \to \Z$ given by $\langle i,j \rangle=I^{\tr}_iI_j$ for arrows $i$ and $j$ in $Q$.

\begin{lemma}\label{L:23}
Let $Q$ be a loop-less quiver. Define an auxiliary function $\Xi:Q_0 \times Q_1 \to \Z^{|Q_0|}$, given for a vertex $v$ and an arrow $k$ of $Q$ by,
\[
\Xi(v,k)=\sum_{i \in Q_1^<(v,k)}I^{-1}_i\frac{\langle i,k \rangle}{|\langle i,k \rangle|},
\]
where as usual the sum is zero when the set of indices is empty. Then the following assertions hold:
\begin{itemize}
\itemsep=0.98pt
 \item[a)] If $j=\max Q_1^<(v,k)$ and $\MiNu(j)=\{v,w\}$, then $\Xi(v,k)=\frac{\langle j,k \rangle}{|\langle j,k \rangle|}\left[I_j-\Xi(w,j) \right]$.
 \item[b)] For any arrow $k$ with $\MiNu(k)=\{v,w\}$ we have $\Xi(v,k)+\Xi(w,k)=I_k-I_k^{-1}$.
\end{itemize}
In particular, for any arrow $k$ in $Q$ the following recursive formula for $I_k^{-1}$ holds,
\[
I_k^{-1}=I_k-\sum_{i<k}I_i^{-1}\langle i,k \rangle.
\]
\end{lemma}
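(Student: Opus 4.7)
The plan is to prove (a) and (b) simultaneously by induction on the arrow $k$ in the total order of $Q_1$. The base case, $k$ the smallest arrow, is immediate: $Q_1^<(v,k)=\emptyset$ for every $v$, so $\Xi(v,k)=0$, and both $\alpha_Q^-(k^{\pm 1})$ are length-one walks, which yields $I_k^{-1}=I_k$; hence (a) is vacuous and (b) reads $0=0$. Throughout, recall that whenever an arrow $a$ meets an arrow $b$ at a vertex $u$ (parallel or not), a direct computation of $\langle a,b\rangle=I_a^{\tr}I_b$ gives $\sgn\langle a,b\rangle=\sigma_Q(u,a)\sigma_Q(u,b)$; in the parallel case the two endpoint contributions have the same sign and merely double the magnitude.

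The first key step of the inductive step is the sign identity
$$\frac{\langle i,k\rangle}{|\langle i,k\rangle|}=\frac{\langle j,k\rangle}{|\langle j,k\rangle|}\cdot\frac{\langle i,j\rangle}{|\langle i,j\rangle|}\qquad(i\in Q_1^<(v,j)),$$
where $j=\max Q_1^<(v,k)$. Since all three pairs $(i,k)$, $(j,k)$, $(i,j)$ share the vertex $v$, the identity reduces to $\sigma_Q(v,j)^2=1$ by the observation above. Using this identity and the fact that $Q_1^<(v,k)\setminus\{j\}=Q_1^<(v,j)$ (because $j$ is the maximum), I would split off the $j$-term in $\Xi(v,k)$:
$$\Xi(v,k)=\frac{\langle j,k\rangle}{|\langle j,k\rangle|}\Bigl[I_j^{-1}+\sum_{i\in Q_1^<(v,j)}I_i^{-1}\frac{\langle i,j\rangle}{|\langle i,j\rangle|}\Bigr]=\frac{\langle j,k\rangle}{|\langle j,k\rangle|}\bigl[I_j^{-1}+\Xi(v,j)\bigr].$$
Applying the inductive hypothesis (b) at $j<k$, namely $I_j^{-1}=I_j-\Xi(v,j)-\Xi(w,j)$, one finds $I_j^{-1}+\Xi(v,j)=I_j-\Xi(w,j)$, which delivers (a) for $k$.

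For (b), first note the bookkeeping identity $\Xi(v,k)+\Xi(w,k)=\sum_{i<k}I_i^{-1}\langle i,k\rangle$, obtained by observing that an arrow $i<k$ meeting $k$ at a single vertex contributes once with $|\langle i,k\rangle|=1$, whereas an $i<k$ parallel to $k$ contributes twice (once through $v$ and once through $w$) with $|\langle i,k\rangle|=2$, matching $\langle i,k\rangle$ in both cases. Thus (b) is equivalent to the claimed recursive formula and to $I_k^{-1}=I_k-\Xi(v,k)-\Xi(w,k)$. To prove this last form I would give a walk-theoretic interpretation: for $v\in\MiNu(k)$ let $\omega(v,k)$ be the ending vertex of the maximal minimally decreasing walk of $Q$ that starts at $v$ with first arrow $\max Q_1^<(v,k)$ (or $v$ itself if the set is empty); then by definition $\tar(\alpha_Q^-(k^{+1}))=\omega(\tar(k),k)$ and $\tar(\alpha_Q^-(k^{-1}))=\omega(\sou(k),k)$, so $I_k^{-1}=\bas_{\omega(\sou(k),k)}-\bas_{\omega(\tar(k),k)}$. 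A sub-induction that uses (a) at $k$ (just established) and the main inductive hypothesis at $j<k$ yields
$$\Xi(v,k)=\sigma_Q(v,k)\bigl(\bas_v-\bas_{\omega(v,k)}\bigr),$$
whose summation over $v\in\{\sou(k),\tar(k)\}$, together with $\sigma_Q(\sou(k),k)=+1$ and $\sigma_Q(\tar(k),k)=-1$, gives exactly $I_k-I_k^{-1}$; this is (b).

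The main obstacle is the sub-induction establishing the sign-twisted telescope identity for $\Xi(v,k)$: three distinct sign conventions must cancel correctly, namely $\sgn\langle j,k\rangle=\sigma_Q(v,j)\sigma_Q(v,k)$, the orientation $\sigma_Q(v,j)I_j=\bas_v-\bas_w$, and the endpoint relation $\sigma_Q(v,j)\sigma_Q(w,j)=-1$. Once these fit together—and the sign identity of the second paragraph guarantees that they do—the lemma, and the final recursive formula $I_k^{-1}=I_k-\sum_{i<k}I_i^{-1}\langle i,k\rangle$, follow directly.
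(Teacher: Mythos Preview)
Your proposal is correct and follows essentially the same route as the paper: a simultaneous induction on the arrow order, proving (a) by peeling off the maximal term $j$ via the sign identity $\sgn\langle i,k\rangle=\sgn\langle j,k\rangle\cdot\sgn\langle i,j\rangle$ at the shared vertex and invoking (b) at $j$, then proving (b) by recursively unrolling (a) along the minimally decreasing walk to obtain the telescopic formula $\Xi(v,k)=\sigma_Q(v,k)(\bas_v-\bas_{\omega(v,k)})$, and finally deriving the recursive identity via the single/parallel counting argument. The only difference is cosmetic: you state the bookkeeping identity $\Xi(v,k)+\Xi(w,k)=\sum_{i<k}I_i^{-1}\langle i,k\rangle$ before the walk computation, whereas the paper places it afterward.
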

\begin{proof}
We proceed by induction on the totally ordered arrows $Q_1$. Observe that if $k$ is minimal in $Q_1$, then $\Xi(\sou(k),k)=0=\Xi(\tar(k),k)$ and $I^{-1}_k=I_k$, therefore all claims hold in this case. For simplicity, for adjacent arrows $j$ and $k$ we take
\[
\sigma(j,k)=\frac{\langle j,k \rangle}{|\langle j,k \rangle|} \in \{\pm 1\}.
\]

To verify $(a)$ for an arrow $k$, assume that $(b)$ is satisfied for all arrows smaller than $k$. Then, since $j=\max Q_1^<(v,k)$, we have $Q_1^<(v,k)=\{j\} \cup Q_1^<(v,j)$, and therefore
\begin{eqnarray}
\Xi(v,k) & = & I^{-1}_{j}\sigma(j,k)+\sum_{i \in Q_1^<(v,j)}I^{-1}_i\sigma(i,k)   \nonumber \\
& = & I^{-1}_{j}\sigma(j,k)+\sum_{i \in Q_1^<(v,j)}I^{-1}_i\sigma(i,j)\sigma(j,k) \label{EqOne} \\
& = & \sigma(j,k)\left[ I_{j}^{-1}+\Xi(v,j) \right] \nonumber \\
& = & \sigma(j,k)\left[ I_{j}^{-1}+I_{j}-I_{j}^{-1}-\Xi(w,j) \right]  \label{EqTwo} \\
& = & \sigma(j,k)\left[ I_{j}-\Xi(w,j) \right], \nonumber
\end{eqnarray}
where the equality~(\ref{EqOne}) holds since $\sigma(i,j)\sigma(j,k)=\sigma(i,k)$ whenever $\MiNu(i)\cap \MiNu(j) \cap \MiNu(k) \neq \emptyset$, and the equality~(\ref{EqTwo}) holds applying $(b)$ to the arrow $j$.

\medskip
To show $(b)$, assume the claim holds for all arrows smaller than $k$, and that $(a)$ holds for all arrows smaller than or equal to $k$. Consider the minimally descending walk $\alpha=\alpha^-_Q(k^{-1})$ of $i_0=k$ as in the definition above,
\[
\alpha=i_0^{\epsilon_0}\cdots i_r^{\epsilon_r},
\]
(in particular, $\epsilon_0=-1$ since $\tar(k)=v_{-1}$). For simplicity, for such a walk and for $t=1,\ldots,r$ define
\[
\sigma(i_t,i_{t-1},\ldots,i_1,i_0)=(-1)^{t+1}\frac{\langle i_{t},i_{t-1} \rangle \; \langle i_{t-1},i_{t-2} \rangle \cdots \langle i_{1},i_{0} \rangle}{|\langle i_{t},i_{t-1} \rangle \; \langle i_{t-1},i_{t-2} \rangle \cdots \langle i_{1},i_{0} \rangle|} \in \{\pm 1\}.
\]
Applying recursively $(a)$ we get $\Xi(v_0,i_0)=\sum_{t=1}^rI_{i_t}\sigma(i_t,\ldots,i_0)$. However, since $\sou(i_0)=v_0$, we have $\sigma(i_t,\ldots,i_0)=-\sigma(v_t,i_t)=\epsilon_t$, and therefore the above expression for $\Xi(v_0,i_0)$ is a telescopic sum, that is,
\[
\Xi(v_0,i_0)=I_{i_1}\epsilon_1+\ldots+I_{i_r}\epsilon_r=\bas_{v_0}-\bas_{\xi(v_0,i_0)}.
\]
Proceeding similarly for $\Xi(w_0,i_0)$, where $w_0=\tar(i_0)$, we find that
\[
\Xi(w_0,i_0)=\bas_{\xi(w_0,i_0)}-\bas_{w_0}.
\]
Since our definition of $I^{-1}_{i_0}$ is $I^{-1}_{i_0}=\bas_{\xi(v_0,i_0)}-\bas_{\xi(w_0,i_0)}$, these equations yield the result,
\[
\Xi(v_0,i_0)+\Xi(w_0,i_0)=\bas_{v_0}-\bas_{\xi(v_0,i_0)}+\bas_{\xi(w_0,i_0)}-\bas_{w_0}=I_{i_0}-I_{i_0}^{-1}.
\]

Now, to verify the last claim, for an arrow $k$ with $\MiNu(k)=\{v,w\}$ consider the sets
\[
X=Q_1^<(v,k)-Q_1^<(w,k), \quad Y=Q_1^<(w,k)-Q_1^<(v,k) \quad \text{and} \quad Z=Q_1^<(v,k)\cap Q_1^<(w,k).
\]
Then, applying $(b)$, we get
\begin{eqnarray}
I_k^{-1} & = & I_k-\sum_{i \in Q_1^<(v,k)}I^{-1}_i\sigma(i,k)-\sum_{i \in Q_1^<(w,k)}I^{-1}_i\sigma(i,k)  \nonumber \\
& = &I_{k}-\left(\sum_{i \in X\cup Y}I^{-1}_i\frac{\langle j,k \rangle}{|\langle j,k \rangle|}+2\sum_{i \in Z}I^{-1}_i\frac{\langle j,k \rangle}{|\langle j,k \rangle|} \right) \nonumber \\
& = & I_k-\sum_{i<k}I_i^{-1}\langle i,k \rangle,  \nonumber
\end{eqnarray}
where the last equality holds since $|\langle i,k \rangle|=1$ if $i \in X \cup Y$, and $|\langle i,k \rangle|=2$ if $i \in Z$ (observe that $Z$ is the set of arrows smaller than $k$ that are parallel to $k$), and $\langle i,k \rangle=0$ if $i \notin X \cup Y \cup Z$. This completes the proof.
\end{proof}

\subsection{Gram matrices of a quiver and its inverse} \label{S(A):InvDos}

We recall that the inverse quiver of a loop-less quiver $Q$ is given in Definition~\ref{D:inv}, and is denoted by $Q^{-1}$.

\begin{proposition}\label{P:24}
 Let $Q$ be a loop-less quiver with inverse quiver $Q^{-1}$ and upper triangular Gram matrices $\widecheck{G}_Q$ and $\widecheck{G}_{Q^{-1}}$. Then
 \[
 I(Q^{-1})=I(Q)\widecheck{G}^{-1}_Q \qquad \text{and} \qquad \widecheck{G}_Q\widecheck{G}_{Q^{-1}}=\Id.
 \]
\end{proposition}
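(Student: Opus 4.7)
My plan is to read off the matrix identity $I(Q^{-1})\widecheck{G}_Q = I(Q)$ directly from the recursive formula supplied by Lemma~\ref{L:23}, and then deduce the Gram identity by a short symmetric/antisymmetric decomposition argument.

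First I would identify the entries of $\widecheck{G}_Q$ in terms of the bracket $\langle -, - \rangle$. Since $Q$ is loop-less we have $\langle k,k\rangle = \|I_k\|^2 = 2$, so in the unique upper triangular decomposition $G_Q = \widecheck{G}_Q + \widecheck{G}_Q^{\tr}$ the diagonal entries of $\widecheck{G}_Q$ are all $1$, and for $i < k$ we have $(\widecheck{G}_Q)_{ik} = \langle i,k\rangle$. Thus the recursive formula
\[
I_k^{-1} = I_k - \sum_{i < k} I_i^{-1}\langle i,k\rangle,
\]
from Lemma~\ref{L:23} rearranges into $\sum_{i \leq k} I_i^{-1}(\widecheck{G}_Q)_{ik} = I_k$. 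Reading this across all $k$ simultaneously is exactly the matrix identity
\[
I(Q^{-1})\,\widecheck{G}_Q = I(Q).
\]
Since $\widecheck{G}_Q$ is upper triangular with $1$'s on the diagonal it is invertible over $\Z$, so the first assertion $I(Q^{-1}) = I(Q)\widecheck{G}_Q^{-1}$ follows immediately.

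For the second assertion I would compute the symmetric Gram matrix of $Q^{-1}$:
\[
G_{Q^{-1}} = I(Q^{-1})^{\tr} I(Q^{-1}) = \widecheck{G}_Q^{-\tr}\,I(Q)^{\tr}I(Q)\,\widecheck{G}_Q^{-1} = \widecheck{G}_Q^{-\tr}\,G_Q\,\widecheck{G}_Q^{-1}.
\]
Substituting $G_Q = \widecheck{G}_Q + \widecheck{G}_Q^{\tr}$ collapses the right-hand side to
\[
G_{Q^{-1}} = \widecheck{G}_Q^{-1} + \widecheck{G}_Q^{-\tr}.
\]
Now $\widecheck{G}_Q^{-1}$ is upper triangular with $1$'s on the diagonal (inverses of unipotent upper triangular matrices are unipotent upper triangular), so by the uniqueness of the upper-triangular part in the decomposition $G_{Q^{-1}} = \widecheck{G}_{Q^{-1}} + \widecheck{G}_{Q^{-1}}^{\tr}$ we conclude $\widecheck{G}_{Q^{-1}} = \widecheck{G}_Q^{-1}$, which is exactly $\widecheck{G}_Q\,\widecheck{G}_{Q^{-1}} = \Id$.

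The only non-routine step is the first one, and the essential work is already packaged in Lemma~\ref{L:23}; what I need to verify carefully is that the recursion is literally the $k$-th column of the matrix product $I(Q^{-1})\widecheck{G}_Q$, which amounts to matching the combinatorial labels $\langle i, k\rangle$ with the $(i,k)$-entry of $\widecheck{G}_Q$ for $i < k$. Implicit in the uniqueness argument for the second assertion is the fact that $Q^{-1}$ is again loop-less (so that its diagonal Gram entries equal $2$); this is automatic from $\widecheck{G}_{Q^{-1}} = \widecheck{G}_Q^{-1}$ having $1$'s on the diagonal, but a reader may want it checked directly from Definition~\ref{D:inv}, where the source and target of $i^*$ are the endpoints of the (non-trivial) structural walks $\alpha_Q^-(i^{\pm 1})$ emanating from the distinct vertices $\sou(i)$ and $\tar(i)$.
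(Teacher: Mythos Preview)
Your proof is correct. The second assertion is argued exactly as in the paper. For the first assertion both you and the paper rely on the recursion of Lemma~\ref{L:23}, but you read $I(Q^{-1})\widecheck{G}_Q=I(Q)$ off directly column-by-column, whereas the paper sets up an induction on $|Q_1|$ and a block decomposition of $\widecheck{G}_Q$ to compute $I(Q)\widecheck{G}_Q^{-1}$, invoking Lemma~\ref{L:23} only for the last column; your route is shorter and avoids the block-matrix bookkeeping. One small remark: the uniqueness of the decomposition $M=A+A^{\tr}$ with $A$ upper triangular holds unconditionally (if $A-B$ is upper triangular and antisymmetric it is zero), so you do not need loop-lessness of $Q^{-1}$ for the second assertion---indeed the paper deduces loop-lessness afterwards, in Corollary~\ref{C:25}(a), from the very identity you have just proved. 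Your aside that the endpoints of $\alpha_Q^-(i^{\pm1})$ are distinct because they start at distinct vertices does not by itself establish this, so it is safest to simply drop that remark.
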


\begin{proof}
Take $G=\widecheck{G}_Q$ and let $A$ be the (upper) triangular adjacency matrix $\Adj(\Inc(Q))$ of the incidence bigraph $\Inc(Q)$ of $Q$ (hence $G+G^{\tr}=I(Q)^{\tr}I(Q)=2\Id-(A+A^{\tr})$, cf.~\ref{S(C):Iqf}).  We proceed by induction on the number of arrows $m=|Q_1|$ of $Q$ (the claim is trivial for $m=1$). Let $1,\ldots,m$ be the ordered arrows of $Q$, and denote by $\widetilde{Q}$ the quiver obtained from $Q$ by removing the maximal arrow~$m$. Notice that, by maximality, the inverse $\widetilde{Q}^{-1}$ is obtained from $Q^{-1}$ by removing its maximal arrow. Then
\[
I(\widetilde{Q})=[I_{1}|\cdots |I_{{m-1}}], \quad \text{and} \quad G=\begin{pmatrix} \widetilde{G}&-V\\0&1 \end{pmatrix},
\]
where $\widetilde{G}=\widecheck{G}_{\widetilde{Q}}$ and $V$ is the column vector of size $m-1$ given by $V=(\langle {t},m \rangle)_{t=1}^{m-1}$ (since $G=\Id-A$, cf. Lemma~\ref{L:08}).  Observe that
\[
G^{-1}=\begin{pmatrix} \widetilde{G}^{-1}&\widetilde{G}^{-1}V\\0&1 \end{pmatrix},
\]
therefore, by induction hypothesis,
\begin{eqnarray}
I(Q)G^{-1} & = & [I(\widetilde{Q})|I_{m}]\begin{pmatrix} \widetilde{G}^{-1}&\widetilde{G}^{-1}V\\0&1 \end{pmatrix} \nonumber \\
& = & [I(\widetilde{Q})\widetilde{G}^{-1}|I(\widetilde{Q})\widetilde{G}^{-1}V+I_
{m}] \nonumber \\[4pt]
& = & [I(\widetilde{Q}^{-1}) | I_{m}+I(\widetilde{Q}^{-1})V].  \nonumber
\end{eqnarray}
We have shown in Lemma~\ref{L:23} that
\[
I_{m}+I(\widetilde{Q}^{-1})V=I_{m}-\sum_{t=1}^{m-1}I^{-1}_{t}\langle {t},m \rangle=I_{m}^{-1},
\]
that is, $I(Q)G^{-1}=[I(\widetilde{Q}^{-1})|I^{-1}_{m}]=I(Q^{-1})$, as wanted.

\medskip
Now we show that $\widecheck{G}_Q\widecheck{G}_{Q^{-1}}=\Id$. By the first claim, observe that
\begin{eqnarray}
\widecheck{G}_{Q^{-1}}+\widecheck{G}_{Q^{-1}}^{\tr}&=&G_{Q^{-1}}=I(Q^{-1})^{\tr}I(Q^{-1}) \nonumber \\
&=&\widecheck{G}_{Q}^{-\tr}I(Q)^{\tr}I(Q)\widecheck{G}_{Q}^{-1}=\widecheck{G}_{Q}^{-\tr}G_Q\widecheck{G}_{Q}^{-1} \nonumber \\[3pt]
& = & \widecheck{G}_{Q}^{-\tr}(\widecheck{G}_{Q}+\widecheck{G}_{Q}^{\tr})\widecheck{G}_{Q}^{-1} = \widecheck{G}_{Q}^{-\tr}+\widecheck{G}_{Q}^{-1}. \nonumber
\end{eqnarray}
This completes the proof since both $\widecheck{G}_{Q^{-1}}$ and $\widecheck{G}_{Q}^{-1}$ are upper triangular matrices.
\end{proof}

As consequence we have the following important observation.

\begin{corollary}\label{C:25}
For a loop-less quiver $Q$ the following assertions hold.
\begin{itemize}
 \item[a)] The inverse quiver $Q^{-1}$ has no loop, and
\[
Q=(Q^{-1})^{-1}.
\]
 \item[b)] The quiver $Q$ is connected if and only if $Q^{-1}$ is connected.
 \item[c)] Inverses commute with arrow inversions, that is, if $C$ is a set of arrows in $Q$, and $C^*$ is the set of corresponding arrows in $Q^{-1}$, then $(Q\mathcal{V}_C)^{-1}=Q^{-1}\mathcal{V}_{C^*}$.
\end{itemize}
\end{corollary}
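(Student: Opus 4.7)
The entire corollary will hinge on Proposition~\ref{P:24}, which gives $I(Q^{-1})=I(Q)\widecheck{G}_Q^{-1}$ and $\widecheck{G}_Q\widecheck{G}_{Q^{-1}}=\Id$. My plan is to reduce all three parts to matrix algebra using these two identities; the only subtlety will be keeping track of when I am entitled to reapply Proposition~\ref{P:24} (which requires loop-lessness of the quiver being inverted), and the argument has a mild bootstrapping flavour.

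For $(a)$ I would first establish that $Q^{-1}$ has no loops, and then exploit the resulting symmetry. Since $\widecheck{G}_Q$ is upper triangular with $1$'s on the diagonal, so is its inverse; Proposition~\ref{P:24} identifies that inverse with $\widecheck{G}_{Q^{-1}}$. By Definition~\ref{D:new} the diagonal of $\widecheck{G}_{Q^{-1}}$ is half the diagonal of $G_{Q^{-1}}=I(Q^{-1})^{\tr}I(Q^{-1})$, so $\|I_i^{-1}\|^2=2$ for every arrow, forcing $I_i^{-1}=\bas_v-\bas_w$ with $v\neq w$, i.e.\ no loops. Loop-lessness in hand, I can apply Proposition~\ref{P:24} to $Q^{-1}$ and compute
\[
I((Q^{-1})^{-1})=I(Q^{-1})\widecheck{G}_{Q^{-1}}^{-1}=I(Q)\widecheck{G}_Q^{-1}\widecheck{G}_Q=I(Q).
\]
Since each column of an incidence matrix records both the source (its $+1$ entry) and the target (its $-1$ entry) of the corresponding arrow, this matrix equality yields $(Q^{-1})^{-1}=Q$.

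For $(b)$ I would translate the question into the language of unit forms. From Proposition~\ref{P:24} I get $G_{Q^{-1}}=\widecheck{G}_Q^{-\tr}G_Q\widecheck{G}_Q^{-1}$, i.e.\ $q_{Q^{-1}}=q_Q\widecheck{G}_Q^{-1}$ with $\widecheck{G}_Q^{-1}\in GL_n(\Z)$, so $q_{Q^{-1}}\sim q_Q$. Both forms are non-negative by Lemma~\ref{L:08}$(c)$, so Lemma~\ref{L:01}$(e)$ transports connectedness in both directions, and Lemma~\ref{L:08}$(d)$ converts this into connectedness of the quivers themselves.

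For $(c)$ I would do a direct matrix computation with the diagonal sign matrix $V_C$. Since $I(Q\mathcal{V}_C)=I(Q)V_C$ and $V_C$ is diagonal (so conjugation by it preserves the upper-triangular shape), one obtains $\widecheck{G}_{Q\mathcal{V}_C}=V_C\widecheck{G}_QV_C$ and hence $\widecheck{G}_{Q\mathcal{V}_C}^{-1}=V_C\widecheck{G}_{Q^{-1}}V_C$. Because $Q\mathcal{V}_C$ remains loop-less, Proposition~\ref{P:24} applies to it and yields
\[
I((Q\mathcal{V}_C)^{-1})=I(Q\mathcal{V}_C)\widecheck{G}_{Q\mathcal{V}_C}^{-1}=I(Q)\widecheck{G}_{Q^{-1}}V_C=I(Q^{-1})V_C=I(Q^{-1}\mathcal{V}_{C^*}),
\]
where the last equality uses the canonical identification of $C$ with $C^*$. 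As in $(a)$, equality of incidence matrices forces the quivers to coincide. The only step I expect to require a moment of care is checking that $V_C\widecheck{G}_QV_C$ is still upper triangular with $1$'s on the diagonal; this is immediate because $V_C$ acts only on signs and not on the support of entries, but it is the sort of silent bookkeeping one should not skip over.
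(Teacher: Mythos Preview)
Your arguments for $(a)$ and $(c)$ are correct and essentially coincide with the paper's; the paper phrases the no-loop step contrapositively (a loop in $Q^{-1}$ would force a zero diagonal entry in the invertible matrix $\widecheck{G}_Q^{-1}$), but this amounts to the same observation as your norm computation, and the matrix manipulations in $(c)$ match line for line.

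For $(b)$ you take a genuinely different route. The paper argues combinatorially: by Definition~\ref{D:inv}, the endpoints of each arrow $i^*$ in $Q^{-1}$ are terminal vertices of minimally decreasing walks in $Q$ starting at the endpoints of $i$, so the construction of the inverse is local to each connected component and a decomposition $Q=Q^1\cup Q^2$ forces $Q^{-1}=(Q^1)^{-1}\cup(Q^2)^{-1}$; part $(a)$ then supplies the converse. Your argument instead observes that $q_{Q^{-1}}\sim q_Q$ via the $\Z$-invertible matrix $\widecheck{G}_Q^{-1}$ and invokes Lemma~\ref{L:01}$(e)$ together with Lemma~\ref{L:08}$(d)$. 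This is valid and arguably more in keeping with the algebraic spirit of the section, though it imports more machinery (in particular the non-trivial Lemma~\ref{L:01}$(e)$) where the paper's direct argument needs almost nothing beyond the definition of $Q^{-1}$. The paper's approach has the minor advantage of being insensitive to any edge cases in Lemma~\ref{L:08}$(d)$ concerning isolated vertices, while yours makes explicit the useful fact that $q_Q$ and $q_{Q^{-1}}$ are weakly congruent.
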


\begin{proof}
For $(a)$, if $Q^{-1}$ has a loop, then its (vertex-arrow) incidence matrix $I(Q^{-1})$ has a zero column, say $I_i=0$. Then the $i$-th diagonal entry of $G_{Q^{-1}}=I(Q^{-1})^{\tr}I(Q^{-1})$ is zero. But $G_{Q^{-1}}=\widecheck{G}_Q^{-1}+\widecheck{G}_Q^{-\tr}$, which means that the $i$-th diagonal entry of $\widecheck{G}_Q^{-1}$ is zero. This is impossible since $\widecheck{G}_Q^{-1}$ is invertible. It follows that the inverse quiver $Q^{-1}$ has no loop. Now, applying Proposition~\ref{P:24}, we obtain
\[
I((Q^{-1})^{-1})=I(Q^{-1})\widecheck{G}_{Q^{-1}}^{-1}=[I(Q)\widecheck{G}_Q^{-1}]\widecheck{G}_Q=I(Q),
\]
that is, $Q=(Q^{-1})^{-1}$. This completes the proof of $(a)$.

\medskip
For $(b)$, notice that the construction of the inverse $Q^{-1}$ involves only vertices and arrows inside the connected components of $Q$, that is, if $Q=Q^1 \cup Q^2$, then
\[
Q^{-1}=(Q^1)^{-1} \cup (Q^2)^{-1}.
\]
Using $(a)$, this shows that $Q$ is connected if and only if so is $Q^{-1}$.

\medskip
For $(c)$, by applying Propositions~\ref{P:12} and~\ref{P:24} we obtain
\begin{eqnarray}
I((Q\mathcal{V}_C)^{-1})&=&I(Q\mathcal{V}_C)\widecheck{G}^{-1}_{Q\mathcal{V}_C}=I(Q)V_C(V_C^{\tr}\widecheck{G}_QV_C)^{-1} \nonumber \\
&=&I(Q)\widecheck{G}_Q^{-1}V_C^{-\tr}=I(Q^{-1})V_C=I(Q^{-1}\mathcal{V}_{C^*}). \nonumber
\end{eqnarray}
Here we use the equalities $\widecheck{G}_{Q\mathcal{V}_C}\!=\!V_C^{\tr}\widecheck{G}_QV_C\,$(cf.$\,$Remark~\ref{R:13}) and $V_C^{-\tr}\!=\!V_C.\,$Clearly$\,I((Q\mathcal{V}_C)^{-1})\!=I(Q^{-1}\mathcal{V}_{C^*})$ implies $(Q\mathcal{V}_C)^{-1}=Q^{-1}\mathcal{V}_{C^*}$. This finishes the proof of $(c)$.
\end{proof}

\subsection{A combinatorial formula for the Coxeter matrix} \label{S(A):Cox}

The \textbf{Coxeter matrix} associated to a unit form $q$ on $n$ variables is the $n \times n$ matrix given by
\[
\Cox_q=-\widecheck{G}_q^{\tr}\widecheck{G}_q^{-1}.
\]
The characteristic polynomial of $\Cox_q$, given by $\cox_q(\va)=\det(\Id\va-\Cox_q)$, is called \textbf{Coxeter polynomial} of $q$. The \textbf{Coxeter number} $\coxN_q$ of $q$ is the minimal natural number $m$ such that $\Cox_q^m=\Id$, if such number exists, and $\coxN_q=\infty$ otherwise (cf.~\cite{dS20} for these and related definitions). The following result may be found in~\cite[Proposition~4.2]{dS20} in a wider context.

\begin{lemma}\label{L:26}
Let $q$ and $q'$ be unit forms. If $q' \approx^Bq$, then
\[
\Cox_{q'}=B^{\tr}\Cox_qB^{-\tr}.
\]
In particular, $\cox_{q'}(\va)=\cox_q(\va)$ and $\coxN_{q'}=\coxN_q$.
\end{lemma}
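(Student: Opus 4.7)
The plan is to simply unwind the definitions. We are given $\widecheck{G}_{q'} = B^{\tr}\widecheck{G}_q B$, so taking transposes yields $\widecheck{G}_{q'}^{\tr} = B^{\tr}\widecheck{G}_q^{\tr} B$. Since $B$ is an automorphism of $\Z^n$ (so invertible over $\Z$) and $\widecheck{G}_q$ is invertible (over $\Q$, which is all we need to form $\Cox_q$), inverting the first congruence gives $\widecheck{G}_{q'}^{-1} = B^{-1}\widecheck{G}_q^{-1} B^{-\tr}$.

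Substituting both expressions into the definition $\Cox_{q'} = -\widecheck{G}_{q'}^{\tr}\widecheck{G}_{q'}^{-1}$, I would compute
\[
\Cox_{q'} \;=\; -\bigl(B^{\tr}\widecheck{G}_q^{\tr} B\bigr)\bigl(B^{-1}\widecheck{G}_q^{-1} B^{-\tr}\bigr) \;=\; -B^{\tr}\widecheck{G}_q^{\tr}\widecheck{G}_q^{-1} B^{-\tr} \;=\; B^{\tr}\Cox_q B^{-\tr},
\]
which is the first claim. The middle $B B^{-1} = \Id$ cancellation is the only computational step, and it is immediate.

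For the two moreover statements, note that $B^{-\tr} = (B^{\tr})^{-1}$, so the identity above says $\Cox_{q'}$ and $\Cox_q$ are conjugate (similar) matrices via the invertible matrix $B^{\tr}$. Two similar matrices have equal characteristic polynomials, which gives $\cox_{q'}(\va) = \cox_q(\va)$. For Coxeter numbers, a trivial induction on $m$ shows $\Cox_{q'}^{m} = B^{\tr}\Cox_q^{m} B^{-\tr}$ for every $m\ge 1$, hence $\Cox_{q'}^{m} = \Id$ if and only if $\Cox_q^{m} = \Id$; taking the minimal such $m$ (or $\infty$ if none exists) yields $\coxN_{q'} = \coxN_q$. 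There is no real obstacle here; the whole lemma is a one-line matrix computation plus the elementary observation that similar matrices share characteristic polynomial and order.
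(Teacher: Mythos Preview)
Your proof is correct and follows essentially the same approach as the paper: both unwind the definition of $\Cox_{q'}$ by transposing and inverting the congruence $\widecheck{G}_{q'}=B^{\tr}\widecheck{G}_qB$, cancel the inner $BB^{-1}$, and then invoke that characteristic polynomial and order are similarity invariants. One minor remark: since $q$ is a unit form, $\widecheck{G}_q$ is upper triangular with $1$'s on the diagonal, so it is already invertible over $\Z$ (not just $\Q$), but this does not affect the argument.
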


\begin{proof}
By hypothesis we have $\widecheck{G}_{q'}=B^{\tr}\widecheck{G}_qB$, therefore $\widecheck{G}_{q'}^{-1}=B^{-1}\widecheck{G}^{-1}_qB^{-\tr}$, and
\[
\Cox_{q'}=-\widecheck{G}_{q'}^{\tr}\widecheck{G}_{q'}^{-1}=-(B^{\tr}\widecheck{G}_q^{\tr}B)(B^{-1}\widecheck{G}_q^{-1}B^{-\tr})=B^{\tr}\Cox_qB^{-\tr}.
\]
Finally, both the characteristic polynomial and the order of a square matrix are similarity invariants, therefore the remaining two equalities hold.
\end{proof}

We now give a combinatorial expression for the Coxeter matrix of some unit forms.

\begin{theorem}\label{T:27}
For a loop-less quiver $Q$, the following formula for the Coxeter matrix $\Cox_{q_Q}$ of the unit form $q_Q$ holds,
\[
\Cox_{q_Q}=\Id-I(Q)^{\tr}I(Q^{-1}).
\]
\end{theorem}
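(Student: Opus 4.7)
The plan is to reduce the claim to a direct algebraic manipulation that combines the definition of $\Cox_{q_Q}$ with Proposition~\ref{P:24}. The key identities I will use are $\widecheck{G}_{q_Q}=\widecheck{G}_Q$ (Definition~\ref{D:new}), the factorization $G_Q=I(Q)^{\tr}I(Q)=\widecheck{G}_Q+\widecheck{G}_Q^{\tr}$, and the inverse relation $I(Q^{-1})=I(Q)\widecheck{G}_Q^{-1}$ from Proposition~\ref{P:24}.

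First I would start from the right-hand side and substitute the formula for $I(Q^{-1})$:
\[
I(Q)^{\tr}I(Q^{-1})=I(Q)^{\tr}I(Q)\widecheck{G}_Q^{-1}=G_Q\widecheck{G}_Q^{-1}.
\]
Next I would use the Gram decomposition $G_Q=\widecheck{G}_Q+\widecheck{G}_Q^{\tr}$ to split this as
\[
G_Q\widecheck{G}_Q^{-1}=\widecheck{G}_Q\widecheck{G}_Q^{-1}+\widecheck{G}_Q^{\tr}\widecheck{G}_Q^{-1}=\Id+\widecheck{G}_Q^{\tr}\widecheck{G}_Q^{-1}.
\]
Finally, subtracting from $\Id$ gives
\[
\Id-I(Q)^{\tr}I(Q^{-1})=-\widecheck{G}_Q^{\tr}\widecheck{G}_Q^{-1}=\Cox_{q_Q},
\]
by the definition of the Coxeter matrix and the identification $\widecheck{G}_{q_Q}=\widecheck{G}_Q$.

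There is essentially no obstacle here: all the heavy lifting has already been done in Proposition~\ref{P:24} (which in turn rested on the technical Lemma~\ref{L:23}). The only care required is to keep track of the transpose structure, verifying that $G_Q\widecheck{G}_Q^{-1}$ really does decompose cleanly into $\Id$ plus $\widecheck{G}_Q^{\tr}\widecheck{G}_Q^{-1}$, which is immediate because $\widecheck{G}_Q\widecheck{G}_Q^{-1}=\Id$. So the proof amounts to a three-line computation using Proposition~\ref{P:24}.
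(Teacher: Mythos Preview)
Your proof is correct and follows essentially the same route as the paper's own argument: both invoke Proposition~\ref{P:24} to replace $I(Q^{-1})$ by $I(Q)\widecheck{G}_Q^{-1}$, use the decomposition $G_Q=\widecheck{G}_Q+\widecheck{G}_Q^{\tr}$, and then read off the definition of $\Cox_{q_Q}$ via $\widecheck{G}_{q_Q}=\widecheck{G}_Q$.
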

\begin{proof}
By Proposition~\ref{P:24} we have
\begin{eqnarray}
\Id-I(Q)^{\tr}I(Q^{-1})&=&\Id-I(Q)^{\tr}I(Q)\widecheck{G}_Q^{-1} = \Id-G_Q\widecheck{G}_Q^{-1} \nonumber \\ &=&\Id-(\widecheck{G}_Q+\widecheck{G}_Q^{\tr})\widecheck{G}_Q^{-1} \nonumber \\
& = & \Id-\Id-\widecheck{G}_Q^{\tr}\widecheck{G}_Q^{-1} \nonumber \\
&=& -\widecheck{G}_{q_Q}^{\tr}\widecheck{G}_{q_Q}^{-1} = \Cox_{q_Q}, \nonumber \vspace*{-3mm}
\end{eqnarray}
since $\widecheck{G}_Q=\widecheck{G}_{q_Q}$.
\end{proof}

We define the \textbf{Coxeter matrix} $\Cox_Q$ of a loop-less quiver $Q$ as
\[
\Cox_Q=\Id-I(Q)^{\tr}I(Q^{-1}).
\]
The Coxeter polynomial of $q_Q$, denoted by $\cox_Q(\va)$, is also referred to as \textbf{Coxeter polynomial} of $Q$.

\begin{corollary}\label{C:28}
Let $q:\Z^n \to \Z$ be a connected non-negative unit form of Dynkin type $\A_{n-c}$ (with $c$ the corank of $q$). Then the entries $c_{ij}$ of the Coxeter matrix $\Cox_q=(c_{ij})_{i,j=1}^n$ of $q$ are bounded as follows.
\[
|c_{ij}-\delta_{ij}|\leq 2, \quad \text{for $i,j=1,\ldots,n$},
\]
where $\delta_{ij}=1$ if $i=j$ and $\delta_{ij}=0$ otherwise. Moreover,
\begin{itemize}
 \item[a)] If $q$ is principal (that is, of corank one) and $|q_{ij}|\leq 1$ for all $i,j=1,\ldots,n$, then $|c_{ij}|\leq 2$ for $i,j=1,\ldots,n$.
 \item[b)] If $q$ is positive, then $|c_{ij}|\leq 1$ for $i,j=1,\ldots,n$.
\end{itemize}
\end{corollary}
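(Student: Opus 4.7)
The plan is to invoke Proposition~\ref{P:19} to write $q=q_Q$ for a connected loop-less quiver $Q$ (with $|Q_1|=n$ arrows), and then apply Theorem~\ref{T:27}, which gives
\[
\Cox_q=\Id-I(Q)^{\tr}I(Q^{-1}), \quad\text{so that}\quad c_{ij}-\delta_{ij}=-I_i^{\tr}I_j^{-1},
\]
where $I_i$ and $I_j^{-1}$ denote the $i$-th and $j$-th columns of $I(Q)$ and $I(Q^{-1})$ respectively. Since $Q$ is loop-less by hypothesis and $Q^{-1}$ is loop-less by Corollary~\ref{C:25}(a), each of $I_i$ and $I_j^{-1}$ has the form $\bas_a-\bas_b$ with $a\neq b$; their inner product therefore lies in $\{-2,-1,0,1,2\}$. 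This immediately gives $|c_{ij}-\delta_{ij}|\leq 2$, and in particular $|c_{ij}|\leq 2$ whenever $i\neq j$.

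For part~(a), the assumption $|q_{ij}|\leq 1$ forces $Q$ to have no parallel arrows (Definition~\ref{D:07}), so $Q$ is a connected simple quiver of corank one with $|Q_1|=|Q_0|$, containing a unique cycle of length $\geq 3$. It remains to exclude $c_{ii}=3$, i.e., $I_i^{-1}=-I_i$, which by Definition~\ref{D:inv} means that both walks $\alpha^-_Q(i^{+1})$ and $\alpha^-_Q(i^{-1})$ are closed. If $i$ is a bridge, removing $i$ separates $\sou(i)$ from $\tar(i)$, so no closure is possible. If $i$ lies on the cycle, then $Q^{(i)}$ is a tree, and the unique path $\pi=(j_1,\ldots,j_r)$ from $\sou(i)$ to $\tar(i)$ has length $r\geq 2$; closure of $\alpha^-_Q(i^{+1})$ would force $j_r>j_{r-1}>\cdots>j_1$ by strict decrease, while closure of $\alpha^-_Q(i^{-1})$ would force $j_1>j_2>\cdots>j_r$, a contradiction.

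For part~(b), positivity (corank zero) makes $Q$ a tree. Any minimally decreasing walk uses arrows in strictly decreasing order and hence never revisits an arrow; since $Q$ has no cycles, such a walk also cannot revisit a vertex, so in particular $\tar^*(i)\neq\sou(i)$ and $\sou^*(i)\neq\tar(i)$, where $\sou^*,\tar^*$ are the source and target functions of $Q^{-1}$. Hence $\sou(i)$ can only coincide with $\sou^*(i)$ (not $\tar^*(i)$), and $\tar(i)$ only with $\tar^*(i)$, so the possible contributions to $I_i^{\tr}I_i^{-1}$ are both $+1$, giving $I_i^{\tr}I_i^{-1}\in\{0,1,2\}$ and $c_{ii}\in\{-1,0,1\}$. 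For $i\neq j$, remove arrow $j$ to split $Q$ into subtrees $T_1\ni\sou(j)$ and $T_2\ni\tar(j)$; after the initial traversal of $j$ the walks $\alpha^-_Q(j^{+1})$ and $\alpha^-_Q(j^{-1})$ remain trapped in $T_2$ and $T_1$ respectively, since the only arrow connecting $T_1$ and $T_2$ is $j$ and it cannot be reused. Hence $\{\sou^*(j),\tar^*(j)\}$ straddles the $(T_1,T_2)$-cut, whereas the only edge of $Q$ crossing this cut is that of $j$; so $\{\sou(i),\tar(i)\}\neq\{\sou^*(j),\tar^*(j)\}$ whenever $i\neq j$, yielding $I_i\neq\pm I_j^{-1}$ and $|c_{ij}|\leq 1$.

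The main obstacle I expect is the combinatorial step in part~(a): translating the algebraic condition $I_i^{-1}=-I_i$ into the simultaneous closure of both structural walks through $i$, and then extracting the contradiction from the strict monotonicity of arrow labels along the unique tree-path in $Q^{(i)}$. Part~(b) reduces more transparently to a tree-partition argument once the key observation — that a minimally decreasing walk in a tree is a simple path — is made.
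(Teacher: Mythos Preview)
Your proof is correct and follows the paper's strategy: realize $q=q_Q$ via Proposition~\ref{P:19}, invoke Theorem~\ref{T:27}, and bound the inner products $I_i^{\tr}I_j^{-1}$ combinatorially. The execution differs in two places. In~(a) the paper, like you, reduces to both structural walks through $i$ being closed, but then argues directly that in a $1$-tree this forces the cycle to have length two (hence a parallel pair, so $|q_{i_0i_1}|=2$); your version extracts the same contradiction via opposite monotone orderings on the path $\pi$ in $Q^{(i)}$. In~(b) the paper rules out $\MiNu^*(i^*)=\MiNu(j)$ by concatenating the two structural walks of $i$ with the arrows $j$ and $i$ into a nontrivial closed walk in the tree $Q$; your edge-cut argument (removing $j$ splits $Q$ into $T_1,T_2$, and the two structural endpoints of $j^*$ land on opposite sides of the cut, which no arrow other than $j$ crosses) is a cleaner route to the same conclusion. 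One step worth spelling out in your~(a): the reason the post-$i$ portion of a closed $\alpha^-_Q(i^{\pm 1})$ actually \emph{is} the path $\pi$ (rather than wandering into pendant branches) is that it is a walk with pairwise distinct arrows in the tree $Q^{(i)}$, and any such walk is necessarily simple.
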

\begin{proof}
By Proposition~\ref{P:19}, there is a connected loop-less quiver $Q$ such that $q=q_Q$. If $I_i$ and $I_i^{-1}$ denote respectively the columns of the (vertex-arrow) incidence matrices $I(Q)$ and $I(Q^{-1})$, then the $(i,j)$-th entry $d_{ij}$ of the matrix $\Cox_Q-\Id$ is $d_{ij}=-I_i^{\tr}I_j^{-1}$. In particular,
\[
|d_{ij}|\leq 2 \quad \text{and}\quad -2\leq d_{ii} \leq 0, \quad \text{for $i,j=1,\ldots,n$}.
\]
Hence the general claim follows since $d_{ij}+\delta_{ij}=c_{ij}$, by Theorem~\ref{T:27}.

\medskip
Let $q$ be a principal unit form. By Corollary~\ref{C:10}, we see that the connected quiver $Q$ satisfies $|Q_0|=|Q_1|$ (that is, $Q$ is a $1$-tree quiver). Assume, to the contrary, that there is an arrow $i$ such that $I_i^{\tr}I^{-1}_i=-2$, that is, $i$ and its corresponding arrow $i^*$ in the inverse quiver $Q^{-1}$ are parallel arrows with opposite directions, say
\[
\sou(i)=v=\tar^*(i^*) \quad \text{and} \quad \tar(i)=w=\sou^*(i^*),
\]
where $Q=(Q_0,Q_1,\sou,\tar)$ and $Q^{-1}=(Q_0,Q^*_1,\sou^*,\tar^*)$. Let $\alpha=\alpha_Q^-(i^{-1})$ and $\beta=\alpha_Q^-(i^{+1})$ be the minimally descending walks starting with arrow $i$, as given after Definition~\ref{D:22}. Then
\[
\sou(\alpha)=\tar(i)=\sou^*(i^*)=\tar(\alpha) \quad \text{and}\quad \sou(\beta)=\sou(i)=\tar^*(i^*)=\tar(\beta),
\]
that is, both $\alpha$ and $\beta$ are closed walks. Since $Q$ is a $1$-tree and $\alpha \neq \beta$ (for $\sou(\alpha) \neq \sou(\beta)$), and both $\alpha$ and $\beta$ are minimally descending walks, we conclude that $\alpha=i_0^{\epsilon_0}i_1^{\epsilon_1}$ and $\beta=i_0^{-\epsilon_0}i_1^{-\epsilon_1}$. In particular $|q_{i_0i_1}|=2$, which proves the claim~$(a)$.

\medskip
Assume now that $q$ is a positive unit form. Again by Corollary~\ref{C:10}, we see that the connected quiver $Q$ is a tree. Assume that $i$ and $j$ are arrows in $Q$ such that $\MiNu^*(i^*)=\MiNu(j)$. Let $\alpha=i_0^{\epsilon_0}i_1^{\epsilon_1}\cdots i_r^{\epsilon_r}$ and $\beta=j_0^{\eta_0}j_1^{\eta_1}\cdots j_s^{\eta_s}$ be as before, where $i=i_0=j_0$. In this situation, observe that there are signs $\epsilon, \eta \in \{ \pm 1\}$ such that the following is a non-trivial closed walk in $Q$,
\[
j_1^{\eta_1}\cdots j_s^{\eta_s}j^{\eta}i_r^{-\epsilon_r}\cdots i_1^{-\epsilon_1}i_0^{\epsilon},
\]
which is impossible since $Q$ is a tree. This shows that $|c_{ij}| \leq 1$ for $i \neq j$.

\medskip
Finally, for any arrow $i$ in $Q$ with corresponding arrow $i^*$ in $Q^{-1}$, we may argue as above to show that if $\MiNu(i)\cap \MiNu^*(i^*) \neq 0$, then either $\sou(i)=\sou^*(i^*)$ or $\tar(i)=\tar^*(i^*)$. This shows that $d_{ii}=-I_i^{\tr}I_{i}^{-1} \in \{-2,-1,0\}$, and in particular $c_{ii}=d_{ii}+1\in \{-1,0,1\}$, which completes the proof of $(b)$.
\end{proof}

\subsection{Extended maximal stars} \label{S(A):Star}

Recall that a \textbf{$1$-tree quiver} is a connected quiver $Q$ with $|Q_0|=|Q_1|$. By \textbf{maximal $1$-star} we mean a $1$-tree quiver $\widetilde{\Star}$ such there is a vertex $v \in \widetilde{\Star}_0$ (called \textbf{center of the star}) incident to all arrows of $\widetilde{\Star}$. Notice that there is exactly one pair of parallel arrows in $\widetilde{\Star}$, say $\ell < m$. In that case, if the maximal $1$-star quiver $\widetilde{\Star}$ has $n+1$ arrows and $1 \leq \ell<m \leq n+1$, we use the notation
\[
\widetilde{\Star}=\widetilde{\Star}^{\ell,m}_n.
\]
For instance, the cases $\widetilde{\Star}_4^{\ell,5}$ for $1 \leq \ell <5$, with corresponding inverses shown underneath, have the following shapes,\vspace*{-1mm}
\[
\xymatrix{{} & {} \ar@{<-}@<-.6ex>[d]_5 \ar@{<-}[d]^-1 & {} \\ {}  \ar@{<-}[r]^-2 & {}  & {} \ar@{<-}@<.1ex>[l]_-4 \\ {} & {} \ar@{<-}[u]_-3  & {} } \quad \xymatrix{{} & {} \ar@{<-}[d]^-1  & {} \\ {}  \ar@{<-}@<-.4ex>[r]_-5 \ar@{<-}@<.4ex>[r]^-2 & {} & {} \ar@{<-}[l]_-4  \\ {} & {} \ar@{<-}[u]_-3  & {} } \quad  \xymatrix{{} & {}\ar@{<-}[d]^-1 & {} \\ {} \ar@{<-}[r]^-2 & {} & {} \ar@{<-}[l]_4  \\ {} & {}  \ar@{<-}@<-.4ex>[u]_-3 \ar@{<-}@<.4ex>[u]^-5 & {} } \quad \xymatrix{{} & {} \ar@{<-}[d]^-1  & {} \\ {}  \ar@{<-}[r]^-2 & {} & {}  \ar@{<-}@<.4ex>[l]^-5 \ar@{<-}@<-.4ex>[l]_-4 \\ {} & {} \ar@{<-}[u]_-3 & {} }
\]
\[
\xymatrix{{} & {}  \ar@{<-}@<.6ex>[d]^-1 & {} \\ {} \ar@{<-}[ru]^-2 & {} \ar@{<-}[r]_-5 & {}  \ar@{<-}[ld]^-4 \\ {} & {} \ar@{<-}[lu]^-3 & {} } \quad \xymatrix{{} & {} \ar@{<-}@<.4ex>[d]^-1 \ar@{<-}[rd]^-5 & {} \\ {} \ar@{<-}[ru]^-2  & {} & {} \ar@{<-}[ld]^-4 \\ {} & {}  \ar@{<-}[lu]^-3 & {} } \quad  \xymatrix{{} & {}\ar@{<-}@<.4ex>[d]^-1 & {} \\ {} \ar@{<-}[ru]^-2 \ar@{<-}@/_4pt/[rr]_-5  & {} & {}  \ar@{<-}[ld]^-4 \\ {} & {} \ar@{<-}[lu]^-3  & {} } \quad \xymatrix{{} & {}  \ar@{<-}@<.4ex>[d]^-1 & {} \\ {} \ar@{<-}[ru]^-2  & {} & {} \ar[ld]_-5   \\ {} & {} \ar@{<-}[lu]^-3 \ar@<-.8ex>[ru]_-4 & {} }
\]

\eject
We now generalize Lemma~\ref{L:16} and Proposition~\ref{P:17} to maximal $1$-stars.

\begin{lemma}\label{L:29}
Let $\widetilde{\Star}$ be a maximal $1$-star quiver.
\begin{itemize}
 \item[a)] For an arbitrary vertex $v$ of $\widetilde{\Star}$, there is a $\widetilde{\Star}$-admissible iterated $FS$-transformation $\FS$ such that $\widetilde{\Star}\FS$ is a maximal $1$-star with center $v$.
 \item[b)] If $\widetilde{\Star}=\widetilde{\Star}^{\ell,m}_n$ and $\widetilde{\Star}'=\widetilde{\Star}_n^{\ell',m'}$, then $q_{\widetilde{\Star}'} \approx q_{\widetilde{\Star}}$ if and only if
 \[
 (m'-\ell')=(m-\ell) \qquad \text{or} \qquad (m'-\ell')+(m-\ell)=n+1.
 \]
\end{itemize}
\end{lemma}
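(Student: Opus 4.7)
For part (a), I will generalize the construction from Lemma~\ref{L:16}. Labelling the arrows of $\widetilde{\Star} = \widetilde{\Star}_n^{\ell,m}$ as $1, \ldots, n+1$ with the parallel pair at positions $\ell < m$, the natural candidate sequence is $\mathcal{W} = \FS_{2,1}\FS_{3,2}\cdots\FS_{n+1,n}$, and each constituent $\FS_{t+1,t}$ is $\widetilde{\Star}$-admissible precisely when the consecutive arrows at positions $t, t+1$ are non-parallel at that moment. Tracking the parallel pair through the sequence, one checks that each $\FS_{t+1,t}$ acts on the ordering by a transposition of positions, so the pair migrates step by step. If $\ell \geq 2$ then the pair never occupies two consecutive positions during the sequence, and $\mathcal{W}$ is fully admissible; an explicit computation analogous to Lemma~\ref{L:16} shows $\widetilde{\Star}\mathcal{W}$ is a maximal $1$-star with center $v_1$. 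If $\ell = 1$ then after the $m-2$ steps $\FS_{2,1}, \ldots, \FS_{m-1, m-2}$ the parallel pair occupies $(m-1,m)$, blocking $\FS_{m, m-1}$; in this case I will use the modified sequence $\mathcal{W}'=\FS_{2,1}\cdots\FS_{m-1,m-2}\FS_{m+1,m}\cdots\FS_{n+1,n}$ (skipping the single forbidden step), and verify by direct computation that $\widetilde{\Star}\mathcal{W}'$ is a maximal $1$-star whose center is the leaf previously connected by the parallel pair. Iterating these constructions with appropriate choices of the ``target'' arrow (cyclically reordering via further $FS$-transformations as needed) allows prescribing any vertex as the new center.

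For part (b), the ``if'' direction is a direct consequence of the bookkeeping in part (a). The generic sequence $\mathcal{W}$ preserves the spacing $m-\ell$ of the parallel pair, since each $\FS_{t+1,t}$ shifts both positions in lockstep by a transposition. The modified sequence $\mathcal{W}'$ applied to $\widetilde{\Star}_n^{1,m}$ yields a maximal $1$-star whose new parallel pair has spacing $n+1-(m-1)$, thereby realizing the involution $k \mapsto n+1-k$ on spacings. Combining these moves with their reversals (Remark~\ref{R:18}), with re-centerings, and with arrow inversions (Remark~\ref{R:13}), one relates $\widetilde{\Star}_n^{\ell,m}$ and $\widetilde{\Star}_n^{\ell',m'}$ by a $\widetilde{\Star}$-admissible iterated $FS$-transformation whenever $m'-\ell' \in \{m-\ell,\, n+1-(m-\ell)\}$; Corollary~\ref{C:15} then yields the claimed strong Gram congruence of the associated unit forms.

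For the ``only if'' direction I will use the Coxeter polynomial as an invariant under strong congruence (Lemma~\ref{L:26}). Applying the combinatorial formula $\Cox_{q_Q}=\Id-I(Q)^{\tr}I(Q^{-1})$ of Theorem~\ref{T:27} requires describing the inverse quiver of $\widetilde{\Star}_n^{\ell,m}$; the structural decreasing walks of this $1$-star are short and easily enumerated (each starts at a leaf and either terminates inside the initial ``arc'' of the total order or traverses the parallel pair), yielding an explicit expression for $\cox_{\widetilde{\Star}_n^{\ell,m}}(\lambda)$ that is symmetric under the involution $m-\ell \mapsto n+1-(m-\ell)$ and distinguishes the two equivalence classes, and whose explicit list will appear in Remark~\ref{R:29medio}. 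The main obstacle throughout this plan is the combinatorial bookkeeping in part (a), namely the verification that each step of the (possibly modified) sequence is admissible, and that the migration of the parallel pair terminates in the claimed way; a subsidiary challenge is the explicit computation of the inverse quiver needed for the ``only if'' part of (b).
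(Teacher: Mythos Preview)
Your proposal follows essentially the same route as the paper's proof: the sequence $\mathcal{W}=\FS_{2,1}\cdots\FS_{n+1,n}$ for $\ell>1$, the skipped-step variant $\mathcal{W}_m$ for $\ell=1$, and the Coxeter polynomial (via Theorem~\ref{T:27} and Lemma~\ref{L:26}) for the ``only if'' direction of~$(b)$.

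One point you should sharpen: the paper isolates a third case $\ell=1$, $m=2$ that your proposal does not treat separately. Your modified sequence $\mathcal{W}'$ applied to $\widetilde{\Star}_n^{1,2}$ does yield a maximal $1$-star with center $v_1$, but the minimal arrow of the result joins $v_1$ to the \emph{old} center $v_0$ rather than to $v_2$; this breaks the inductive mechanism (inherited from Lemma~\ref{L:16}) by which repeated application of $\FS$ carries the center from $v_1$ to $v_2$ to $v_3$, etc. The paper resolves this with an entirely different explicit transformation $\mathcal{M}^{n-1}\mathcal{M}_n$ built from the reversed sequence $\mathcal{M}=\FS_{n,n+1}\cdots\FS_{1,2}$. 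Your phrase ``cyclically reordering via further $FS$-transformations as needed'' is too vague to cover this; you should either supply such a transformation for the case $m=2$, or argue carefully that iterating your two sequences (possibly out of lockstep with the vertex cycle) still reaches every vertex. Otherwise the plan is sound.
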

\begin{proof}
Let $\widetilde{\Star}_1=\{1,\ldots,{n},{n+1}\}$ be the arrows of the maximal $1$-star $\widetilde{\Star}$, all of them having in common the center of the star $v_0$, and assume $n > 3$. Take $\widetilde{\Star}=\widetilde{\Star}^{\ell,m}_n$ for arrows $1 \leq \ell < m \leq n+1$, and enumerate the non-central vertices of $\widetilde{\Star}$ so that $v_t$ is incident to arrow $t$ for $t=1,\ldots,m-1$, and to arrow ${t+1}$ for $t=m,\ldots,n$.

To prove $(a)$ it is enough to show, as in Lemma~\ref{L:16}, that there is a $\widetilde{\Star}$-admissible iterated $FS$ transformation $\FS$ such that $\widetilde{\Star}\FS$ is a maximal $1$-star with center $v_1$, and such that the vertex $v_2$ is incident to the minimal arrow of $\widetilde{\Star}\FS$. We distinguish three cases:

\medskip
\noindent \textbf{Case 1.} Assume first that $\ell>1$. We use the following iterated $FS$-transformation,
\[
\mathcal{W}=\FS_{2,1}\FS_{3,2}\cdots \FS_{n+1,n}.
\]
As in Lemma~\ref{L:16}, a direct computation shows
\[
\widetilde{\Star}^{\ell,m}_n\mathcal{W}=\widetilde{\Star}^{\ell-1,m-1}_n,
\]
where now $v_1$ is the star center of $\widetilde{\Star}\mathcal{W}$, and the first arrow of $\widetilde{\Star}\mathcal{W}$ is joining vertices $v_1$ and $v_2$.

\medskip
\noindent \textbf{Case 2.} Assume now that $\ell=1$ and $m>2$, and consider the following iterated $FS$ transformation,
\[
\mathcal{W}_m=\FS_{2,1}\FS_{3,2}\cdots\FS_{m-1,m-2}\FS_{m+1,m}\cdots \FS_{n+1,n},
\]
obtained from $\mathcal{W}$ by omitting the transformation $\FS_{m,m-1}$.
Notice similarly that $\widetilde{\Star}^{1,m}_n\mathcal{W}=\widetilde{\Star}^{m-1,n+1}_n$. Indeed, the quiver $Q:=\widetilde{\Star}\FS_{2,1}\FS_{3,2}\cdots\FS_{m-1,m-2}$ has the following shape,
\[
\xymatrix@C=3pc@R=1pc{
{v_2} \ar@{-}[rd]^-{1} & & & {v_m} \ar@{-}[ld]_-{m+1} \\
\vdots & {v_1} \ar@{-}@<-.5ex>[r]_-{m-1} \ar@{-}@<.5ex>[r]^-{m} & {v_0} & \vdots \\
{v_{m-1}} \ar@{-}[ru]_-{m-2} & & & {v_n} \ar@{-}[lu]^-{n+1} }
\]
The arrows $m$ and $m-1$ are parallel in $Q$, therefore we omit $\FS_{m,m-1}$ to avoid loops (see Remark~\ref{R:06}). However, $\FS_{m+1,m}\cdots \FS_{n+1,n}$ is a $Q$-admissible iterated $FS$-transformation, and one can directly compute
\[
\widetilde{\Star}^{1,m}_n\mathcal{W}_m=Q\FS_{m+1,m}\cdots \FS_{n+1,n}=\widetilde{\Star}^{m-1,n+1}_n.
\]
Moreover, since $m>2$, both vertices $v_1$ and $v_2$ are incident to the minimal arrow of $\widetilde{\Star}\mathcal{W}_m$.

\medskip
\noindent \textbf{Case 3.} Assume now that $\ell=1$ and $m=2$. Take
\[
\mathcal{M}=\FS_{n,n+1}\FS_{n-1,n}\cdots \FS_{1,2} \quad \text{and} \quad\mathcal{M}_{n}=\FS_{n-1,n}\cdots\FS_{1,2},
\]
and observe that $Q'=\widetilde{\Star}\mathcal{M}^{n-1}$ has the following shape,
\[
Q'=\xymatrix@C=3pc@R=1.2pc{
{v_2} \ar@{-}[d]^-1 \ar@{-}@<-.5ex>[rd]_-n \ar@{-}@<.5ex>[rd]^(.7){n+1} \ar@{-}@/^10pt/@<1ex>[rrd]^-{n-1} \ar@{-}@/_8pt/[dd]_(.7){2} \ar@{-}@<-1ex>@/_18pt/[ddd]_-{n-2} \\ {v_3} & {v_1}  & {v_0} \\
{v_4} \ar@{}[d]|-{\vdots} \\ v_{n} } \quad  Q'\mathcal{M}_n= \xymatrix@C=3pc@R=1.2pc{
{v_2} \ar@{-}@<-.5ex>[rd]_(.3){1} \ar@{-}@<.5ex>[rd]^(.7){n+1}  \\ {v_3} \ar@{-}[r]_-2 & {v_1} \ar@{-}[r]_-{n} & {v_0} \\
{v_4} \ar@{-}[ru]_-3 \ar@{}[d]|-{\vdots} \\ v_{n} \ar@{-}[ruu]_-{n-1} }
\]
Notice also that $Q'\mathcal{M}_n=\widetilde{\Star}^{1,n+1}_n$ is a maximal $1$-star with center $v_1$, and such that $v_2$ is incident to the minimal arrow of $Q'\mathcal{M}_n$.

\medskip
Take now $\FS$ to be the transformation $\mathcal{W}$, $\mathcal{W}_m$ or $\mathcal{M}^{n-1}\mathcal{M}_n$ in cases~1, 2 and~3 respectively, and observe that we have
\begin{equation} \label{EqThree}
\widetilde{\Star}^{\ell,m}_n\FS= \left\{ \begin{array}{l l} \widetilde{\Star}^{\ell-1,m-1}_n, & \text{if $\ell>1$},\\ \widetilde{\Star}^{m-1,n+1}_n, & \text{if $\ell=1$}.\end{array} \right.
\end{equation}
By the above, $\widetilde{\Star}\FS$ is a maximal $1$-star with center $v_1$ and such that $v_2$ is incident to the minimal arrow of $\widetilde{\Star}\FS$, which shows the inductive step to complete the proof of $(a)$.

\medskip
To show $(b)$, if
 \[
 (m'-\ell')=(m-\ell) \qquad \text{or} \qquad (m'-\ell')+(m-\ell)=n+1,
 \]
 then by Corollary~\ref{C:15} and equation~(\ref{EqThree}) we have $q_{\widetilde{\Star}'} \approx q_{\widetilde{\Star}}$. For the converse, assume that $q_{\widetilde{\Star}'} \approx q_{\widetilde{\Star}}$. Using equation~(\ref{EqThree}), we may also assume that $m=n+1=m'$, and that $1 \leq \ell,\ell' \leq \frac{n+1}{2}$. In this case, consider the shape of the Coxeter polynomial of $\widetilde{\Star}$ (see remark below), must have $\ell=\ell'$. Hence the result.
\end{proof}

\begin{remark}\label{R:29medio}
A direct calculation using the description of Coxeter matrices in Theorem~\ref{T:27} yields the following Coxeter polynomial for a maximal $1$-star $\widetilde{\Star}=\widetilde{\Star}^{\ell,m}_n$,
\[
\varphi_{\widetilde{\Star}}(\va)=(\va^{m-\ell}-1)(\va^{(n+1)-(m-\ell)}-1).
\]
Therefore, Lemma~\ref{L:29}$(b)$ may be reinterpreted as follows:
\begin{itemize}
\item[b')] Two maximal $1$-star quivers are strongly Gram congruent if and only if they have the same Coxeter polynomial.
\end{itemize}
Results of this kind for posets may be found in~\cite{GSZ14}.
\end{remark}

\begin{proposition}\label{P:30}
Let $Q$ be a $1$-tree quiver. Then for any vertex $v$ in $Q$ there is a $Q$-admissible iterated $FS$-transformation $\FS$ such that $Q\FS$ is a maximal $1$-star with center the vertex $v$.
\end{proposition}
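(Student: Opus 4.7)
The plan is to argue by induction on $n=|Q_1|$, mirroring the structure of Proposition~\ref{P:17} but with an additional case to handle the unique cycle of the $1$-tree. The base case $n=2$ is a pair of parallel arrows between two vertices, which is already a maximal $1$-star, and Lemma~\ref{L:29}(a) allows selecting either vertex as center.

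For the inductive step, I let $m$ be the maximum arrow of $Q$ with $\MiNu(m)=\{v,w\}$ and analyze $Q^{(m)}$. If $m$ lies in the unique cycle of $Q$, then $Q^{(m)}$ is a connected spanning tree; Proposition~\ref{P:17} yields a $Q^{(m)}$-admissible iterated $FS$-transformation $\FS$---automatically $Q$-admissible by maximality of $m$---making $Q^{(m)}\FS$ a maximal star with center $v$, and then $m$ is parallel to the unique star-arrow joining $v$ to $w$, so $Q\FS$ is a maximal $1$-star with center $v$. Otherwise, $Q^{(m)}=Q^1\sqcup Q^2$ with $Q^1$ the $1$-tree component containing both $v$ and the cycle, and $Q^2$ the tree component containing $w$. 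When $|Q^2_0|=1$ (so $w$ is a leaf of $Q$), the inductive hypothesis applied to $Q^1$ supplies a $Q^1$-admissible iterated $FS$-transformation bringing $Q^1$ to a maximal $1$-star with center $v$, and appending the pendant arrow $m$ yields a maximal $1$-star on $|Q_0|$ vertices with center $v$.

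The remaining subcase $|Q^2_0|>1$ is where the main obstacle lies. Following the strategy of Case~2 of Proposition~\ref{P:17}, I first apply Proposition~\ref{P:17} to the tree $Q^2$ to bring it into a maximal star with center $w$, and take $k$ to be the maximum arrow of $Q^2$ (whose position in $Q_1$ is preserved by transformations internal to $Q^2$, since flations within $Q^2$ only modify endpoints of $Q^2$-arrows). I then apply $\FS_{k,m}$: since $k$ becomes pendant with leaf $u$, the arrow in position $m$ of the resulting quiver becomes a pendant tree arrow joining $u$ to $v$, reducing matters to the previous pendant subcase. The delicate point is that $Q$-admissibility of $\FS_{k,m}$ requires that no arrow of $Q^1$ with position strictly between $k$ and $m$ be incident to $v$ (these are the only candidates, since $Q^1$- and $Q^2$-arrows are mutually non-adjacent in $Q^{(m)}$ and $k$ is already the maximum of $Q^2_1$). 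When this condition fails, additional preparatory $Q^1$-admissible transformations are needed to redistribute positions---specifically, using the inductive hypothesis on the smaller $1$-tree $Q^1$ with a center chosen so that the one or two arrows of the resulting $1$-star incident to $v$ end up in positions at most $k$. This positional control is the crux of the proof and the main subtlety beyond the tree case of Proposition~\ref{P:17}.

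Once $Q$ has been brought by a $Q$-admissible iterated $FS$-transformation to a maximal $1$-star with some center $v_0$, a final application of Lemma~\ref{L:29}(a) moves the center to the prescribed vertex $v$, completing the proof.
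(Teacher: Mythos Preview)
Your overall inductive skeleton matches the paper's: delete the maximal arrow $m$, split into the case where $Q^{(m)}$ is connected (a spanning tree, handled by Proposition~\ref{P:17}) and the case where it has two components, with the pendant subcase settled by induction. The divergence is entirely in the hard subcase $|Q^2_0|>1$.

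There your argument has a genuine gap. You always transform the \emph{tree} component $Q^2$ first, take $k=\max Q^2_1$, and then try to force $\FS_{k,m}$ to be admissible by rearranging $Q^1$ so that the one or two arrows incident to $v$ land in positions at most $k$. But this positional control is neither provided by the inductive hypothesis nor achievable in general: if every position of $Q^1_1$ exceeds $k$ (for instance $Q^2_1=\{1,2\}$, $Q^1_1=\{3,4,5\}$, $m=6$), then \emph{any} arrow of the resulting $1$-star $Q^1$ that is incident to $v$ necessarily sits in a position strictly between $k$ and $m$, so $\FS_{k,m}$ is never $Q$-admissible no matter which center you pick. The inductive hypothesis lets you choose the center, but says nothing about which positions end up carrying the arrows through a given non-central vertex.

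The paper avoids this obstacle by a simple relabelling trick you are missing: rather than fixing $Q^2$ as the tree component, it lets $Q^v$ be whichever component contains the \emph{second} largest arrow $n-1$ (so $Q^v$ may be the tree or the $1$-tree). After bringing $Q^v$ to a maximal $c$-star with center $v$ via Proposition~\ref{P:17} or the inductive hypothesis, the arrow in position $n-1$ is automatically incident to the center $v$, hence adjacent to $m=n$, and since $n-1$ and $n$ are \emph{consecutive} the transformation $\FS_{n-1,n}$ is admissible with no further conditions. One then checks that in the resulting quiver the maximal arrow is either pendant (reducing to the pendant subcase) or lies on the unique cycle (reducing to the connected case). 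This single ``use the component containing $n-1$'' move replaces your unproved positional-control step and makes the argument go through cleanly.
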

\begin{proof}
We proceed as in Proposition~\ref{P:17}, that is, by induction on the number $n=|Q_1|$ of arrows in $Q$. For $n=1,2$ the tree $Q$ is a $1$-star, and we may change the position of its center as in the Lemma above. Hence, we may assume that $n \geq 3$ and that the claim holds for all $1$-trees with less than $n$ arrows.

Let $n$ be the maximal arrow in $Q$ (relative to the total order $\leq$ in $Q_1$) and take $\MiNu(n)=\{v,w\}$. Let $Q'$ be the quiver obtained from $Q$ by deleting the arrow $n$. The set $Q'_1$ inherits a total order from $Q_1$.  Observe that, by the maximality of $n$, any $Q'$-admissible iterated $FS$-transformation is also $Q$-admissible. We distinguish two cases:

\medskip
\noindent \textbf{Case 1.} Assume first that $Q'$ is a connected quiver. Then $Q'$ is a ($0-$)tree, and we may use Proposition~\ref{P:17} to assume that $Q'$ is a maximal star with center $v$. Since $\MiNu(n)=\{v,w\}$, then $Q$ is a maximal $1$-star.

\medskip
\noindent \textbf{Case 2.} Assume now that $Q'$ is not connected. Then $Q'$ is the disjoint union of exactly two connected quivers, one containing vertex $v$ and denoted by $Q^v$, and one containing vertex $w$ and denoted by $Q^w$.

\medskip
\textbf{Subcase 2.1.} Assume first that $|Q^w_0|=1$. Then $Q^v$ is a $1$-tree, and by induction hypothesis we may assume that $Q^v$ is a maximal $1$-star with center $v$. Hence $Q$ is a maximal star, and we proceed analogously if $|Q^v_0|=1$.

\medskip
\textbf{Subcase 2.2.} Assume now that $|Q^w_0|>1$ and $|Q^v_0|>1$, and that the second largest arrow $n-1$ in $Q$ belongs to $Q^v$. Since $Q$ is a $1$-tree, either $Q^v$ is a $0$-tree or a $1$-tree with less than $n$ arrows. Thus, by Proposition~\ref{P:17} or induction hypothesis respectively, we may assume that $Q^v$ is a maximal $c$-star with center $v$ for $c=0$ or $c=1$. First, if $n-1$ is a pendant arrow in $Q^v$, then $n$ is a pendant arrow in $Q'=Q\FS^{\epsilon}_{n-1,n}$, and we may apply Case~1 above to the $1$-tree quiver $Q'$. Second, if $n-1$ has a parallel arrow $j$ in $Q^{v}$, then the arrows $j$, $n-1$ and $n$ form a cycle in $Q'$, and we may apply Case~1 above to the $1$-tree quiver $Q'$.

\medskip
\noindent ~~~~To complete the proof, use Lemma~\ref{L:29}$(a)\,$to change the center of the resulting$\,1$-star as desired.
\end{proof}

We end this section with the second main classification result of the paper.

\begin{theorem}\label{T:34}
Two (connected) principal unit forms of Dynkin type $\A_n$ are strongly Gram congruent if and only if they have the same Coxeter polynomial.
\end{theorem}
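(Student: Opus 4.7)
The plan is to mirror the strategy of Theorem~\ref{T:20}, but use the finer Coxeter polynomial invariant in place of weak congruence, and exploit the explicit reduction of $1$-trees to maximal $1$-stars provided by Proposition~\ref{P:30}.

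The forward direction is immediate from Lemma~\ref{L:26}: if $q' \approx^{B} q$ then $\Cox_{q'}$ is similar to $\Cox_q$, so $\cox_{q'}(\va) = \cox_{q}(\va)$. For the converse, let $q$ and $q'$ be connected principal unit forms of Dynkin type $\A_n$ with $\cox_{q}(\va) = \cox_{q'}(\va)$. By Proposition~\ref{P:19}, I can write $q = q_Q$ and $q' = q_{Q'}$ for connected loop-less quivers $Q, Q'$. Since $\CRnk(q) = \CRnk(q') = 1$, Corollary~\ref{C:10} forces $|Q_0| = |Q_1|$ and similarly for $Q'$, so both are $1$-tree quivers.

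I then apply Proposition~\ref{P:30} to obtain $Q$-admissible and $Q'$-admissible iterated $FS$-transformations $\FS, \FS'$ reducing $Q$ and $Q'$ to maximal $1$-stars $\widetilde{\Star}^{\ell,m}_n$ and $\widetilde{\Star}^{\ell',m'}_n$ respectively. Composing with arrow inversions via Remark~\ref{R:13} as in the proof of Theorem~\ref{T:20} takes care of arrow directions, and the machinery of Section~\ref{S(C):ite} yields strong congruences $q \approx q_{\widetilde{\Star}^{\ell,m}_n}$ and $q' \approx q_{\widetilde{\Star}^{\ell',m'}_n}$. Since strong congruence preserves the Coxeter polynomial (Lemma~\ref{L:26}), the two resulting stars share theirs. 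Remark~\ref{R:29medio} then gives
\[
(\va^{m-\ell}-1)\bigl(\va^{n+1-(m-\ell)}-1\bigr) \;=\; (\va^{m'-\ell'}-1)\bigl(\va^{n+1-(m'-\ell')}-1\bigr).
\]
Factoring each $\va^k - 1$ into cyclotomics, the largest cyclotomic factor of either side is $\Phi_{\max(m-\ell,\, n+1-(m-\ell))}$, from which one recovers the unordered pair $\{m-\ell,\, n+1-(m-\ell)\}$, and similarly on the right. Hence either $m-\ell = m'-\ell'$ or $(m-\ell) + (m'-\ell') = n+1$, and Lemma~\ref{L:29}(b) gives $q_{\widetilde{\Star}^{\ell,m}_n} \approx q_{\widetilde{\Star}^{\ell',m'}_n}$. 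Transitivity of strong congruence closes the argument.

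The substantive combinatorial content is already packaged in Proposition~\ref{P:30} (the reduction of $1$-trees to maximal $1$-stars) and Remark~\ref{R:29medio} (the closed form of the Coxeter polynomial for such stars); the only non-routine residual step is the cyclotomic comparison above, which is straightforward. If there is any obstacle, it lies in bookkeeping the composition of $FS$-transformations with the point inversions needed to reconcile the two possible directions of parallel arrows at the star's pair of parallel edges, but this is handled exactly as in Theorem~\ref{T:20}.
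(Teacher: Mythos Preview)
Your proposal is correct and follows essentially the same route as the paper: reduce via Proposition~\ref{P:30} to maximal $1$-stars, invoke Remark~\ref{R:29medio} for their Coxeter polynomials, and conclude via Lemma~\ref{L:29}(b). The paper normalizes to $m=n+1$, $\ell \leq (n+1)/2$ before comparing (so the cyclotomic step is implicit), and it omits the arrow-inversion step you mention since Lemma~\ref{L:29}(b) already applies to $\widetilde{\Star}^{\ell,m}_n$ irrespective of orientation, making that bookkeeping unnecessary.
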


\begin{proof}
Since Coxeter polynomials are strong Gram invariants (Lemma~\ref{L:26}), we only need to show sufficiency: assume $q$ and $q'$ are principal unit forms in $n+1$ variables, both of Dynkin type $\A_n$ and same Coxeter polynomial.  By Proposition~\ref{P:19}, there are connected $1$-tree quivers $Q$ and $Q'$ such that $q=q_Q$ and $q'=q_{Q'}$. By Proposition~\ref{P:30}, there is a $Q$-admissible iterated $FS$-transformation $\FS$ and a maximal $1$-star $\widetilde{\Star}_n^{\ell,m}$ (for some $n \geq 1$ and $1 \leq \ell < m \leq n+1$), such that $Q\FS=\widetilde{\Star}_n^{\ell,m}$. By Lemma~\ref{L:29}$(b)$ and Remark~\ref{R:29medio}, we may assume that $m=n+1$ and $1 \leq \ell \leq \frac{n+1}{2}$ is such that
\[
\varphi_{Q}(\va)=\varphi_{\widetilde{\Star}_n^{\ell,n+1}}(\va)=(\va^{\ell}-1)(\va^{n+1-\ell}-1).
\]

Proceeding similarly for $q_{Q'}$, since $\varphi_{Q'}=\varphi_{Q}$, we find a $Q'$-admissible iterated $FS$-transformation $\FS'$ with $Q'\FS'=\widetilde{\Star}_n^{\ell,n+1}=Q\FS$, hence $q \approx q'$ by Corollary~\ref{C:15}.
\end{proof}

\section*{Concluding remarks and future work}

We stress that the proof of all preparatory results towards the main theorems (the elementary quiver transformations, Lemmas~\ref{L:16},~\ref{L:29}, Propositions~\ref{P:17},~\ref{P:30}, and most importantly, Proposition~\ref{P:19}) are completely constructive, and can be easily implemented in any programming language of general use. In particular, one may follow the proofs of Theorems~\ref{T:20} and~\ref{T:34} to find algorithmic solutions to Problem~B in terms of iterated $FS$-transformations, for the case of non-negative unit forms of Dynkin type $\A_n$ of corank zero or one.

\medskip
It seems to be a good idea to consider quadratic forms $q:\Z^n \to \Z$ having symmetric Gram matrix $G_q$ factorized as
\[
G_q=I^{\tr}I,
\]
for a $n \times m$ matrix $I$ with ``special'' properties, for instance, one having columns in the root systems as given in~\cite[Definitions~3.1 and~3.2]{CGSS}. Any such quadratic form is clearly non-negative. Here we consider the root system $A_n$ given in~\cite[Definition~3.1]{CGSS}, that is, matrices $I$ such that for each column $I\bas_i$ (for $i=1,\ldots,n$) there are signs $S,T \in \{\pm 1\}$ and indices $s,t \in \{1,\ldots,n\}$ satisfying
\begin{itemize}
\itemsep=0.9pt
 \item[i)] $I\bas_i=S\bas_s+T\bas_t$.
 \item[ii)] $S \neq T$.
 \item[iii)] $s \neq t$.
\end{itemize}
Indeed, matrices with these three conditions are precisely the (vertex-arrow) incidence matrices of loop-less quivers, and Proposition~\ref{P:19} asserts that the corresponding quadratic forms are precisely the non-negative unit forms with all components of Dynkin type $\A_r$.

\medskip
Assume, additionally, that $A$ is a $\Z$-invertible matrix morsification of $q$ with integer coefficients (in the sense of Simson~\cite{dS13a}). As in the proof of Theorem~\ref{T:27}, the Coxeter matrix $\Cox_A=-A^{\tr}A^{-1}$ of $A$ admits the following expression,
\[
\Cox_A=\Id_n-I^{\tr}IA^{-1}.
\]
In an upcoming paper~\cite{jaJ2020b}, we show that the similarity invariants of $\Cox_A$ correspond to the orthogonal invariants of the matrix
\[
\Lambda_A=\Id_m-IA^{-1}I^{\tr},
\]
which turns out to be an orthogonal matrix, producing in this way many important strong Gram invariants of $A$. The particular case when $IA^{-1}$ satisfies again conditions $(i-iii)$ above has special combinatorial features, as illustrated in Section~\ref{S(A):InvDos} when $A$ is the standard morsification of $q$. In this case, $IA^{-1}$ is precisely the incidence matrix of the inverse of the quiver with incidence matrix $I$. Although successful for coranks zero and one, we do not know whether the technique of $FS$-transformations used in Lemmas~\ref{L:16} and~\ref{L:29} can be generalized to higher coranks (even corank two). As in the proofs of Propositions~\ref{P:17} and~\ref{P:30}, and Theorems~\ref{T:20} and~\ref{T:34}, such a generalization would imply the Coxeter spectral determination of strong Gram classes of non-negative unit forms of Dynkin type $\A_n$, as in the positive and principal case. In an upcoming work, we approach such strong classification with a matricial method.

\medskip
A different direction is to omit conditions $(ii)$ and $(iii)$ on $I$, that is, to consider simply ``incidence matrices'' as in~\cite{tZ08}. In a future work we show that the corresponding quadratic forms include not only non-negative semi-unit forms of Dynkin type $\A_n$, but also those of Dynkin type $\D_n$, as well as the Euler form of important classes of algebras (for instance, gentle algebras of finite global dimension). Moreover, the results of~\ref{S(A):Cox} and~\cite{jaJ2020b} can be extended to unimodular morsifications of quadratic forms with Gram matrix factorized by ``incidence matrices'', potentially facilitating their Coxeter spectral analysis.

In some sense, the matrix $I$ \emph{exposes} internal properties of $I^{\tr}I$. The straightforward constructions and considerations of the paper work in support of this claim.

\bibliographystyle{fundam}

\begin{thebibliography}{1000}
\bibitem{mA67} Aigner M. \textit{On the Linegraph of a Directed Graph}, Math. Zeitschr. 1967. \textbf{102}:56--61.
doi:10.1007/BF01110285.

\bibitem{B99}Barot M. \textit{A characterization of positive unit forms},
Bol. Soc. Mat. Mexicana 1999. 5(1):87-93. ISSN:1405-213X, ID:124439229.

\bibitem{BGZ06} Barot M, Geiss C, and Zelevinsky A.
 \textit{Cluster algebras of finite type and positive symmetrizable matrices}. J. London Math. Soc.,
 2006. \textbf{73}(3):545--564. doi:10.1112/S0024610706022769.

\bibitem{BJP19} Barot M, Jim\'enez Gonz\'alez JA, and de la Pe\~na JA. \textit{Quadratic Forms: Combinatorics and Numerical Results}, Algebra and Applications, Vol. \textbf{25} Springer Nature Switzerland AG 2018. ISBN-13:978-3030056261, 10:3030056260.

\bibitem{BP99} Barot M, and de la Pe\~na JA.
\textit{The Dynkin type of a non-negative unit form}, Expo. Math. 1999. 17:339--348.

\bibitem{BP06}
Barot M, and de la Pe\~na JA.
\textit{Root-induced integral quadratic forms},
Linear Algebra Appl. 2006. 412(2):291--302.  doi:10.1016/j.laa.2005.06.030.

\bibitem{BS16} Belardo F, Li Marzi EM, and Simi\'c SK. \textit{Signed line graphs with least eigenvalue $-2$: The star complement technique},
Discrete Applied Mathematics 2016. \textbf{207}:29--38. doi:10.1016/j.dam.2016.02.018.

\bibitem{CGSS} Cameron PJ, Goethals JM, Seidel JJ, and Shult EE. \textit{Line graphs, Root Systems, and Elliptic Geometry}, J. Algebra
1976. \textbf{43}(1):305--327. doi:10.1016/0021-8693(76)90162-9.

\bibitem{CDD21} Cavaleri M, D'Angelo D, and Donno A.
\textit{Characterizations of line graphs in signed and gain graphs}. 2021.  arXiv:2101.09677v2 [math.CO]

\bibitem{CRS04} Cvetkovi\'c D, Rowlinson P, and Simi\'c S.
\textit{Spectral Generalizations of Line Graphs, On graphs with least eigenvalue $-2$}, Cambridge University Press, 2004.
ISBN:9780511751752.  doi:10.1017/CBO9780511751752.

\bibitem{GSZ14}G\k{a}siorek M, Simson D, and Zaj\k{a}c K.
\textit{Algorithmic computation of principal posets using Maple and Python}. Algebra and Discrete Math.
2014. \textbf{17}(1):33--69.  URL \url{ http://dspace.nbuv.gov.ua/handle/123456789/152339}.

\bibitem{GH68} Geller D, and Harary F.
\textit{Arrow diagrams are line diagrams},
SIAM J. Appl. Math., 1968. \textbf{16}(6):1141--1145. URL \url{https://www.jstor.org/stable/2099532}.

\bibitem{HN61} Harary F, and Norman RZ.
 \textit{Some properties of line digraphs},   Rend. Circ. Mat. Palermo 1961. \textbf{9}:161--168.
 doi:10.1007/BF02854581.

\bibitem{jaJ2018} Jim\'enez Gonz\'alez JA. \textit{Incidence graphs and non-negative integral quadratic forms}. Journal of Algebra
2018. \textbf{513}:208--245. doi:10.1016/j.jalgebra.2018.07.020.

\bibitem{jaJ2020b} Jim\'enez Gonz\'alez JA.
\textit{Coxeter invariants for non-negative unit forms of Dynkin type $\A_n$}. 2020. arXiv:2010.09991v1 [math.CO].

\bibitem{jK12} Kosakowska J.
\textit{Inflation algorithms for positive and principal edge-bipartite graphs and unit quadratic forms}, Fund. Inform.
2012. \textbf{119}(2):149--162,  doi.org/10.3233/FI-2012-731.

\bibitem{MM19} Makuracki B, and Mr\'oz A.
\textit{Root systems and inflations of non-negative quasi-Cartan matrices}, Linear Algebra Appl.
2019. \textbf{580}:128--165.  doi:10.1016/j.laa.2019.06.006.

\bibitem{MS19} Makuracki B, and Simson D.
\textit{A Gram classification of principal Cox-regular edge-bipartite graphs via inflation algorithm},
Discrete Appl. Math. 2019. \textbf{253}:25--36. doi:10.1016/j.dam.2017.10.033.

\bibitem{cdM00} Meyer CD.
\textit{Matrix Analysis and Applied Linear Algebra}. Philadelphia, SIAM, 2000. ISBN:9780898714548.
doi:10.1137/1.9780898719512.

\bibitem{saO78} Ovsienko SA.
\textit{Integral weakly positive forms}, in: Schur Matrix Problems and Quadratic Forms (Russian), in: Inst. Mat. Akad. Nauk USSR,
Preprint 78.25, 1978  pp. 3--17.

\bibitem{dS11} Simson D.
\textit{Mesh algorithms for solving principal Diophantine equations, sand-glass tubes and tori of roots}. Fund. Inform.,
2011. \textbf{109}(4):425--462.  doi:10.3233/FI-2011-520.

\bibitem{dS11a} Simson D. \textit{Mesh geometries of root orbits of integral quadratic forms},
J. Pure Appl. Algebras 2011. \textbf{215}(1):13--34.  doi:10.1016/j.jpaa.2010.02.029.

\bibitem{dS13} Simson D. \textit{A Coxeter Gram classification of positive simply laced edge-bipartite graphs}.
SIAM J. Discrete Math., 2013.  \textbf{27}(2):827--854.   doi:10.1137/110843721.

\bibitem{dS13a} Simson D.
 \textit{Algorithms determining matrix morsifications, Weyl orbits, Coxeter polynomials and mesh geometries of roots for Dynkin diagrams},
 Fund. Inform. 2013. \textbf{123}:447--490.

\bibitem{dS13b} Simson D.
 \textit{A framework for Coxeter spectral analysis of edge-bipartite graphs, their rational morsifications and mesh geometries of root orbits}, Fund. Inform. 2013.  \textbf{124}(3):309--338.  doi:10.3233/FI-2013-836.

\bibitem{dS16a} Simson D.
 \textit{Symbolic algorithms computing Gram congruences in the Coxeter spectral classification of edge-bipartite graphs I. A Gram classification}. Fund. Inform., 2016. \textbf{145}(1):19--48.   doi:10.3233/FI-2016-1345.

\bibitem{dS16b} Simson D.
\textit{Symbolic algorithms computing Gram congruences in the Coxeter spectral classification of edge-bipartite graphs II. Isotropy mini-groups}. Fund. Inform.,  2016. \textbf{145}(1):49--80.   doi:10.3233/FI-2016-1346.

\bibitem{dS18} Simson D.
\textit{A Coxeter spectral classification of positive edge-bipartite graphs I. Dynkin types $\mathcal{B}_n$, $\mathcal{C}_n$, $\mathcal{F}_4$, $\mathcal{G}_2$, $\E_6$, $\E_7$, $\E_8$}. Linear Algebra Appl. 2018. \textbf{557}:105--133.   doi:10.1016/J.LAA.2018.07.013.

\bibitem{dS20} Simson D.
 \textit{A computational technique in Coxeter spectral study of symmetrizable integer Cartan matrices}.
 Linear Algebra Appl.,  2020. \textbf{586}(3):190--238.  doi:10.1016/j.laa.2019.10.015.

\bibitem{dS21a} Simson D.
\textit{A Coxeter spectral classification of positive edge-bipartite graphs II. Dynkin type $\D_n$.}
Linear Algebra and its Applications 2021. \text{612}:223--272. doi:10.1016/j.laa.2020.11.001.

\bibitem{dS21b} Simson D.
\textit{Weyl orbits of matrix morsifications and a Coxeter spectral classification of positive signed graphs and
quasi-Cartan matrices of Dynkin type $\A_n$.} Preprint.

\bibitem{SZ17} Simson D, and Zaj\k{a}c K.
\textit{Inflation algorithm for loop-free non-negative edge-bipartite graphs of corank at least two},
Linear Algebra Appl. 2017.  \textbf{524}:109--152. 	doi:10.1016/j.laa.2017.02.021.

\bibitem{vH88} von H{\"o}hne H-J.
\textit{On weakly positive unit forms}, Comment. Math. Helvetici 1988. \textbf{63}:312--336. doi:10.1007/BF02566771.

\bibitem{tZ81} Zaslavsky T.
\textit{The geometry of root systems and signed graphs},   Amer. Math. Monthly 1981. \textbf{88}(2):88--105. doi:10.2307/2321133.


\bibitem{tZ82} Zaslavsky T.
\textit{Signed graphs}, Discrete Applied Mathematics.
 North-Holland Publishing Company 1982. \textbf{4}(1):47--74.   doi:10.1016/0166-218X(82)90033-6.

\bibitem{tZ84} Zaslavsky T.
\textit{Line graphs of switching classes},
Report of the XVIIIth O.S.U. Denison Maths Conference (Granville, Ohio, 1984), pp. 2–4.
Dept. of Math., Ohio State Univ., Columbus, Ohio, 1984.

\bibitem{tZ08} Zaslavsky T.
\textit{Matrices in the theory of signed simple graphs},
in: Advances in Discrete Mathematics and Applications: Mysore, 2008, in Ramanujan Math. Soc. Lect. Notes Ser.,
Vol. \textbf{13}, Ramanujan Math. Soc., Mysore, 2010, pp. 207--229.

\bibitem{fZ99} Zhang F.
\textit{Matrix Theory: Basic Results and Techniques}, 2nd Ed. Springer 1999.  ISBN:978-1-4757-5797-2.
\end{thebibliography}

\end{document}